\newtheorem*{theorem-intro}{Theorem}
\newtheorem*{question-intro}{Question}
\newtheorem*{corollary-intro}{Corollary}
\newtheorem{theorem}{Theorem}[section]
\newtheorem{lemma}[theorem]{Lemma}
\newtheorem{proposition}[theorem]{Proposition}
\newtheorem{corollary}[theorem]{Corollary}
\theoremstyle{definition}
\newtheorem{remark}[theorem]{Remark}
\newcommand{\Z}{\mathbb{Z}}
\newcommand{\Q}{\mathbb{Q}}
\newcommand{\Ow}{\mathcal{O}}
\newcommand{\Co}{\mathbb{C}}
\newcommand{\F}{\mathbb{F}}
\newcommand{\E}{\mathbb{E}}
\newcommand{\Fq}{\mathbb{F}_q}
\newcommand{\Mcal}{{\mathcal M}}
\newcommand{\B}{\mathbb{B}}
\newcommand\Ima{\operatorname{Im}}
\newcommand\car{\operatorname{char}}
\newcommand\Tr{\operatorname{Tr}}
\newcommand\Aut{\operatorname{Aut}}
\newcommand\G{\mathbb G}
\newcommand\T{\mathbb T}
\newcommand\U{\mathbb U}
\newcommand{\Uno}{\mathds{1}}
\newcommand{\Oint}{\mathcal{O}}
\newcommand{\Fr}{\operatorname{Fr}}
\newcommand{\Ind}{\operatorname{Ind}}
\newcommand{\ord}{\operatorname{ord}}
\newcommand{\overbar}[1]{\mkern 1.5mu\overline{\mkern-1.5mu#1\mkern-3.5mu}\mkern 1.5mu}
\newcommand{\menos}{\hspace{-1pt}\setminus \hspace{-1pt}}
\newcommand*\sq{\mathbin{\vcenter{\hbox{\rule{0.6ex}{0.6ex}}}}}
\newcommand{\GL}{\mathbf{GL}}
\newcommand{\PGL}{\mathbf{PGL}}
\newcommand{\PSL}{\mathbf{PSL}}
\newcommand{\SL}{\mathbf{SL}}
\newcommand{\Sp}{\mathbf{Sp}}
\DeclareMathAlphabet{\mathpzc}{OT1}{pzc}{m}{it}
\definecolor{rojo}{RGB}{196,1,9}
\definecolor{azul}{RGB}{29, 79, 132}
\definecolor{naranja}{RGB}{254,101,18}
\begin{document}

\title[Non-existence of integral Hopf orders]{Non-existence of integral Hopf orders for twists of several simple groups of Lie type}

\author[G. Carnovale, J. Cuadra, E. Masut]{Giovanna Carnovale, Juan Cuadra, Elisabetta Masut}

\address{\noindent G. C. \& E. M.:
Dipartimento di Matematica Tullio Levi-Civita,
Universit\`a degli Studi di Padova,
via Trieste 63, 35121 Padova, Italia}
\address{\noindent J. C.: Universidad de Almer\'\i a, Dpto. Matem\'aticas, 04120 Almer\'\i a, Spain\bigskip}

\email{carnoval@math.unipd.it}
\email{jcdiaz@ual.es}
\email{elisabetta.masut@math.unipd.it}

\thanks{2020 \emph{Mathematics Subject Classification.} 16T05 (primary), 16H10, 20G40 (secondary)}

\begin{abstract}
Let $p$ be a prime number and $q=p^m$, with $m \geq 1$ if $p \neq 2,3$ and \mbox{$m>1$} otherwise. Let $\Omega$ be any non-trivial twist for the complex group algebra of $\PSL_2(q)$ arising from a $2$-cocycle on an abelian subgroup of $\PSL_2(q)$. We show that the twisted Hopf algebra $(\Co \PSL_2(q))_{\Omega}$ does not admit a Hopf order over any number ring. The same conclusion is proved for the Suzuki groups, and for $\SL_3(p)$ when the twist stems from an abelian $p$-subgroup. This supplies new families of complex semisimple (and simple) Hopf algebras that do not admit a Hopf order over any number ring. The strategy of the proof is formulated in a general framework that includes the finite simple groups of Lie type. \par \vspace{2pt}

As an application, we combine our results with two theorems of Thompson and Barry and Ward on minimal simple groups to establish that for any finite non-abelian simple group $G$ there is a twist $\Omega$ for $\Co G$, arising from a $2$-cocycle on an abelian subgroup of $G$, such that $(\Co G)_{\Omega}$ does not admit a Hopf order over any number ring. This partially answers in the negative a question posed by Meir and the second author.
\end{abstract}
\maketitle

\section{Introduction}

Before discussing the content of this paper, we put into context the topic treated here and review the antecedents.

\subsection{Hopf orders in group algebras and Galois module theory}
Hopf orders (see Section 2 for a definition) were proposed by Childs in \cite{Ch1} as a way to deal with wildly ramified field extensions in the study of rings of algebraic integers. The role played by the structure of Hopf algebra is explained in \cite{Ch1} and more profusely in the monograph \cite{Ch2}. The introductions of \cite{By1}, \cite{By2}, \cite{ChM}, \cite{Ta1}, and \cite{Ta2} are also illustrative. \par \vspace{2pt}

Let $L/K$ be a Galois extension of number fields with Galois group $G$ and $\Oint_L/\Oint_K$ the corresponding extension of rings of algebraic integers. Recall that $L$ has a normal basis as a $K$-vector space; that is, a basis of the form $\{g(a): g \in G\}$ for some $a \in L$. This can be rephrased as that $L$ is free (necessarily of rank one) as a module over the group algebra $K\hspace{-0.9pt}G$. Galois module theory revolves around the question of whether $\Oint_L$ admits an integral normal basis as an $\Oint_K$-module; see \cite{F}. A keystone in this theme is Noether's theorem, which states that $\Oint_L$ is locally free (or projective) as a module over the group ring $\Oint_K G$ if and only if $L/K$ is at most tamely ramified. \par \vspace{2pt}

For wildly ramified extensions, Leopoldt successfully used the following (algebra) order of $K\hspace{-0.9pt}G$, which is larger than $\Oint_K G$:
$$\mathcal{A}=\{\alpha \in K\hspace{-0.9pt}G : \alpha(s) \in \Oint_L \hspace{1.3mm} \forall s \in \Oint_L\}.$$
After a first result in \cite{Ch1} for abelian extensions, Childs and Moss proved in \cite{ChM} that if $\mathcal{A}$ is a Hopf order of $K\hspace{-0.9pt}G$ over $\Oint_K$, then $\Oint_L$ is locally free of rank one as an $\mathcal{A}$-module. The range of application of this idea in number theory and algebraic geometry is beautifully expounded in the notes \cite{Ta2} by Taylor and Byott.

\subsection{Kaplansky's sixth conjecture and non-existence of integral Hopf orders}
Whereas for cocommutative Hopf algebras there is a well-established theory of Hopf orders, very little is known for non-commutative and non-cocommutative Hopf algebras, and, especially, for semisimple ones. A problem related to Hopf orders that is largely open here is Kaplansky's sixth conjecture. This conjecture states that the dimension of every irreducible representation of a complex semisimple Hopf algebra $H$ divides the dimension of $H$. In plain words, it claims that Frobenius' divisibility theorem holds for complex semisimple Hopf algebras. Larson proved in \cite{L} that if $H$ admits a Hopf order over a number ring, then $H$ satisfies the conjecture, thus leaving open the question of whether a complex semisimple Hopf algebra always admits a Hopf order over a number ring. \par \vspace{2pt}

In connection with this problem, Meir and the second author initiated in \cite{CM1} and \cite{CM2} a study of the existence of Hopf orders for several families of semisimple Hopf algebras that are constructed as Drinfeld twists of group algebras through the following method proposed by Movshev in \cite{Mv}: \par \vspace{2pt}

Let $G$ be a finite non-abelian group and $M$ an abelian subgroup of $G$. Let $K$ be a number field and consider\vspace{0.5pt} the group algebra $K\hspace{-0.9pt}M$. Assume that $K$ is large enough so that $K\hspace{-0.9pt}M$ is isomorphic\vspace{-1pt} to a direct product of $\vert M \vert$ copies of $K$. Take the set $\{e_{\phi}\}_{\phi \in \widehat{M}}$ of orthogonal primitive\vspace{-3.5pt} idempotents giving that decomposition. Here, $\widehat{M}$ denotes the character group of $M$ and $e_{\phi} = \frac{1}{\vert M \vert } \sum_{v \in M} \phi(v^{-1}) v.$ If $\omega: \widehat{M} \times \widehat{M} \rightarrow K^{\times}$ is a normalized 2-cocycle, then
$$\Omega_{M, \omega}:=\sum_{\phi,\psi \in \widehat{M}} \omega(\phi,\psi) e_{\phi} \otimes e_{\psi}$$
is a twist for $K\hspace{-0.9pt}G$ (see Subsection \ref{drt}). The natural coalgebra structure of $K\hspace{-0.9pt}G$ is deformed by $\Omega_{M, \omega}$ as follows:
$$\Delta_{\Omega_{M, \omega}}(g):= \Omega_{M, \omega} (g \otimes g) \Omega_{M, \omega}^{-1} \qquad \forall g \in G.$$
(See Subsection \ref{drt} for the matching formula of the antipode.) This new Hopf algebra structure on the group algebra $K\hspace{-0.9pt}G$ is denoted by $(K\hspace{-0.9pt}G)_{\Omega_{M, \omega}}$. \par \vspace{2pt}

For the families of groups $G$ and twists $\Omega_{M, \omega}$ studied in \cite{CM1} and \cite{CM2} it was proved that $(K\hspace{-0.9pt}G)_{\Omega_{M, \omega}}$ does not admit a Hopf order over $\Oint_K$. As a consequence, the complex semisimple Hopf algebra $(\Co G)_{\Omega_{M,\omega}}$ does not admit a Hopf order over any number ring. (But it satisfies Kaplansky's conjecture.) This uncovered an important arithmetic difference between semisimple Hopf algebras and group algebras, and showed, in particular, that the proof of Frobenius' theorem does not carry over, at least in the expected way, to all semisimple Hopf algebras. \par \vspace{2pt}

The reason for the non-existence of integral Hopf orders for these families of semisimple Hopf algebras is unclear. In \cite[Section 5]{CM2} the authors focused on the fact that they are simple as Hopf algebras (i.e., the only normal Hopf subalgebras are the trivial ones) and asked the following:

\begin{question-intro}
Let $G$ be a finite non-abelian group. Let $\Omega$ be a non-trivial twist for $\Co G$, arising from a $2$-cocycle on an abelian subgroup of $G$ as before. Suppose that $(\Co G)_{\Omega}$ is simple. Can $(\Co G)_{\Omega}$ admit a Hopf order over a number ring?
\end{question-intro}

When $G$ is simple, $(\Co G)_{\Omega}$ is always simple independently of the twist (\cite[Corollary 4.3]{N}).

\subsection{Subject of the paper}
The present paper elaborates on this question. Its ultimate aim is to give the following partial negative answer (Corollary \ref{simple2}):

\begin{theorem-intro}
Let $G$ be a finite non-abelian simple group. Then, there is a twist $\Omega$ for $\Co G$, of the previous kind, such that $(\Co G)_{\Omega}$ does not admit a Hopf order over any number ring.
\end{theorem-intro}

This is achieved using first a result of Barry and Ward (relying on the classification of finite simple groups) which asserts that $G$ contains a minimal simple group (\cite{BW}). A minimal simple group is a non-abelian simple group all of whose proper subgroups are solvable. Secondly, we invoke the classification of such groups established by Thompson in \cite{Th}. They are:
\begin{enumerate}
\item[$\sq$]  $\PSL_2(2^p)$, with $p$ a prime. \vspace{0.75pt}
\item[$\sq$] $\PSL_2(3^p)$, with $p$ an odd prime. \vspace{0.75pt}
\item[$\sq$] $\PSL_2(p)$, with $p>3$ prime such that $5$ divides $p^2+1$. \vspace{0.75pt}
\item[$\sq$] ${}^2\!B_2(2^p)$, with $p$ an odd prime. \vspace{0.75pt}
\item[$\sq$] $\PSL_3(3)$.
\end{enumerate}
Thirdly, we focus on the families of projective special linear groups of sizes $2$ and $3$ and the Suzuki groups. To deal with them we devise in Section \ref{stfrm} a general strategy and a framework of application that covers the finite simple groups of Lie type. Then we prove the following core result (Theorems \ref{main1} and \ref{main2} and Proposition \ref{Suz:classct} abridged): \enlargethispage{\baselineskip}

\begin{theorem-intro}
Let $K$ be a number field and $R\subset K$ a Dedekind domain containing $\Oint_K$. Let \vspace{-3pt}
\begin{enumerate}
\item[$\sq$] $p$ be a prime number and $q=p^m$ with $m \geq 1$. \vspace{1.5pt}
\item[$\sq$] $G$ be a finite group from the list below. \vspace{1.5pt}
\item[$\sq$] $M$ be the abelian subgroup of $G$ that is given together with $G$ in the list. \vspace{1pt}
\item[$\sq$] $\omega: \widehat{M} \times \widehat{M} \rightarrow K^{\times}$ be any normalized non-degenerate cocycle. (This needs $M$ to be of central type.)
\end{enumerate}
If $(K\hspace{-1pt}G)_{\Omega_{M,\omega}}$ admits a Hopf order over $R$, then $\frac{1}{|M|} \in R$. Hence,
$(K\hspace{-1pt}G)_{\Omega_{M,\omega}}$ does not admit a Hopf order over $\Oint_K$. \par \vspace{2pt}

The pairs $(G,M)$ for which the above statement holds are the following:
\begin{enumerate}
\item[(i)] $G$ is $\PSL_2(q)$ and $M$ is any non-trivial abelian subgroup of central type. (Here, $m>1$ if $p=2,3$.) \vspace{1.5pt}
\item[(ii)] $G$ is  $^2\!B_2(q)$ and $M$ is any non-trivial abelian subgroup of central type. (Here, $p=2$ and $m>1$ odd.) \vspace{1.5pt}
\item[(iii)] $G$ is $\SL_3(p)$ and $M$ is any abelian $p$-subgroup of central type.
\end{enumerate}
\end{theorem-intro}

The statement for the complexified group algebra will follow as a consequence (Corollaries \ref{complexif} and \ref{cormain2} abridged):

\begin{corollary-intro}
Let $(G,M,\omega)$ be as in the above theorem. Then, the complex semisimple Hopf algebra $(\Co G)_{\Omega_{M,\omega}}$ does not admit a Hopf order over any number ring.
\end{corollary-intro}

This supplies more families of complex semisimple and simple Hopf algebras for which the previous question has a negative answer. For $\PSL_2(q)$ and $^2\!B_2(q)$ it provides furthermore the first infinite families of simple groups for which the answer is complete. \par \vspace{2pt}

So far we have presented the main results obtained in this paper and explained how they contribute to the preliminary question. We next outline the proof of the preceding theorem (see Section \ref{stfrm}). \par \vspace{2pt}

Set, for short, $H=(K\hspace{-1pt}G)_{\Omega_{M,\omega}}$ and suppose\vspace{0.5pt} that $X$ is a Hopf order of $H$ over $R$. The dual Hopf order $X^{\star}$ consists of those $\varphi \in H^*$ such that $\varphi(X) \subseteq R$. The whole proof is rooted in the fact that every character of $H$ lies in $X^{\star}$ and every cocharacter of $H$ lies\vspace{1.5pt} in $X$ (Proposition \ref{character}). As in \cite{CM1} and \cite{CM2}, the idea is to get elements $\varphi \in X^{\star}$ and $x \in X$ such\vspace{-2pt} that $\varphi(x)=\frac{n}{\vert M \vert}$, where $n \in \Z$ and $\gcd(n,\vert M \vert)=1$. This will give that $\frac{1}{\vert M \vert} \in R$. Such elements are constructed\vspace{0.5pt} by using a character and a cocharacter of $H$ and the structure of Hopf order of $X$.  \par \vspace{2pt} \enlargethispage{\baselineskip}

For the groups in the theorem, and more generally for any simple group of Lie type, there is a natural Sylow $p$-subgroup $U$ available. The novelty in our approach with respect to \cite{CM1} and \cite{CM2} is the single usage of the induced character $\chi:=\Ind_U^G(\Uno_U)$. By construction, $\chi$ vanishes outside $\bigcup_{g\in G} g \hspace{0.25pt} U\hspace{-1.25pt} g^{-1}$. This character\vspace{0.75pt} equips us with a powerful tool that works in the general framework described in Subsection \ref{sec:strategy-p}. For practical effects, its limitation depends on the difficulty of computing the values at $U$. The computation for the groups treated here is not hard. \par \vspace{2pt}

For the cocharacter, we pick $\tau \in G$ such that $M \cap (\tau M \tau^{-1})=\{1\}$. The subspace spanned by the double coset $M \tau M$ is a simple subcoalgebra of $H$. The associated irreducible cocharacter is $c_{\tau}:=|M|e_{\varepsilon}\tau e_{\varepsilon}$ (Proposition \ref{decomp}). Here, as before, $e_{\varepsilon}$ denotes\vspace{0.75pt} the idempotent of $K\hspace{-0.9pt}M$ attached to the trivial character. The \linebreak element $y_{\chi}:= (\chi \otimes id)\Delta_{\Omega_{M,\omega}}(c_{\tau})$ must belong\vspace{0.5pt} to $X$ and, therefore, $y_{\chi}^2$ must as well. Then, $\chi(y_{\chi}^2) \in R$. These two elements, $\chi \in X^{\star}$ and $y_{\chi}^2 \in X$, will accomplish the task. \par \vspace{2pt}

In Proposition \ref{key} we show that $y_{\chi}$ can be written as:
$$y_{\chi}=\frac{1}{\vert M \vert}\sum_{g \in M \tau M} \chi(g)g.$$
The expression of $\chi(y_{\chi}^2)$ given in Equation \ref{chiychi2fib} allows us to design a procedure to carry out its computation in a systematic manner. The relation between the double coset $M \tau M$ and the fibers of $\chi$ of non-zero values is behind the result. At the end, the tasks reduce to: choose an appropriate representative $\tau$ of the double cosets of $M$ in $G$, compute the non-zero values of $\chi$ and follow the steps in the procedure. For the choice of $\tau$, we propose in Subsection \ref{sec:strategy-p} a meaningful set of elements in any simple group of Lie type. \par \vspace{2pt}

Our results suggest that the answer to the precursory question could be always negative. However, as in \cite{CM1} and \cite{CM2}, the method of proof does not tell us about the reason for this to be so. To make progress on this matter one probably has to understand in depth the effect of the twisting operation inside the classical theory of Hopf orders. A fact that could be related to this is the following, see \cite[Proposition 20.1 and page 108]{Ch2}: {\it Let $L$ be a number field and $G$ a finite group which has no non-trivial normal $p$-subgroups for any prime number $p$. Then, the only Hopf order of $LG$ is $\Oint_L G$.}

\subsection{Organization of the paper} The paper is organized as follows: \par \vspace{2pt}

Section 2 provides the necessary background material: basics on Hopf orders, definition of Drinfeld's twist, and Movshev's method of twisting a group algebra. It also aims to establish Proposition \ref{key}. Section 3 presents the strategy that we will use in the proof and explains how it can be potentially applied to finite groups of Lie type. The procedure to compute $\chi(y_{\chi}^2)$ is described here. \par \vspace{2pt}

In Section 4 we prove the non-existence of integral Hopf orders for the twisted group algebra of $\PSL_2(q)$, $^2\!B_2(q)$, and $\SL_3(p)$ when the twist arises from an abelian $p$-subgroup of central type (Theorem \ref{main1} and Corollary \ref{complexif}). For $^2\!B_2(q)$ these are indeed all the abelian subgroups of central type (Proposition \ref{Suz:classct}). In Section 5 we discuss in full the case of $\PSL_2(q)$. In the first part, we determine all the abelian subgroups of central type. We show in Proposition \ref{subPSL2} that they are: $p$-subgroups of square order or Klein four-groups. In the second part, we prove the non-existence of integral Hopf orders for the twisted group algebra of $\PSL_2(q)$ when the twist arises from the latter ones (Theorem \ref{main2} and Corollary \ref{cormain2}). \par \vspace{2pt} \enlargethispage{\baselineskip}

The application of these results to the twisted group algebra of any finite non-abelian simple group is expounded in Section 6.

\subsection{Acknowledgements}
Giovanna Carnovale and Elisabetta Masut were partially supported by DOR2022390/20 ``\emph{Hopf algebras, Nichols algebras and deformations}'' and BIRD203834 ``\emph{Grassmannians, flag varieties and their generalizations}'', funded by the University of Padova. Giovanna Carnovale participates in the INdAM group GNSAGA and in PRIN 2017YRA3LK-002 ``\emph{Moduli and Lie Theory}''. The work of Juan Cuadra was partially supported by the Spanish Ministry of Science and Innovation, through the grant PID2020-113552GB-I00 (AEI/FEDER, UE), by the Andalusian Ministry of Economy and Knowledge, through the grant P20\underline{ }00770, and by the research group FQM0211. \par \vspace{2pt}

Juan Cuadra would like to thank Ehud Meir for several helpful discussions. Giovanna Carnovale would like to express her gratitude to Jay Taylor for an enlightening e-mail exchange concerning GG-characters. \par \vspace{2pt}

The authors are grateful to the referees for their comments and suggestions, which helped to improve several aspects of the writing of this paper. \vspace{1mm}

\section{Recollections and preliminaries}
\setcounter{equation}{0}

The general background for this paper is the same as that of \cite{CM1} and \cite{CM2}. For convenience, we collect in this section several notions, constructions, and results that will be needed later. We refer to \cite[Subsection 1.2]{CM1} and \cite[Sections 1 and 2]{CM2} for proofs, further explanations, and extra references.

\subsection{Conventions}
We will work over a ground field $K$. Vector spaces, linear maps, and unindexed tensor products are over $K$, unless otherwise specified. Throughout, $H$ is a finite-dimensional Hopf algebra over $K$. We denote by $1_H$ its identity element; and by $\Delta,\varepsilon$, and $S$ its coproduct, counit, and antipode, respectively. The dual Hopf algebra of $H$ is denoted by $H^*$. \par \vspace{2pt}

Our main references for the general theory of Hopf algebras are \cite{Mo} and \cite{Ra}. For the classical theory of Hopf orders (cocommutative Hopf algebras) we recommend \cite{Ch2} and \cite{U}.

\subsection{Recap of Hopf orders} Let $W$ be a finite-dimensional vector space over $K$. Let $R$ be a subring of $K$. A \emph{lattice of\hspace{1pt} $W$\hspace{-1.5pt} over $R$} is a finitely generated and projective $R$-submodule $X$ of $W$ such that the natural map $X \otimes_R K \rightarrow W$ is an isomorphism. The submodule $X$ corresponds to the image of $X \otimes_R R$. \par \vspace{2pt}

A \textit{Hopf order of $H$ over $R$} is a lattice $X$ of $H$ that is closed under all the Hopf algebra operations; namely, $1_H \in X$, $XX \subseteq X$, $\Delta(X)\subseteq X\otimes_{R} X$, $\varepsilon(X) \subseteq R,$ and $S(X)\subseteq X$. (For the coproduct, $X\otimes_{R} X$ is naturally identified as an $R$-submodule of $H\otimes H$.) Alternatively, $X$ is a Hopf algebra over $R$, which is finitely generated and projective as an $R$-module, such that $X\otimes_{R} K \simeq H$ as Hopf algebras over $K$. \par \smallskip

In the following three propositions, $K$ is a number field and $R\subset K$ is a Dedekind domain containing $\Oint_K$, the ring of algebraic integers of $K$. Under these hypotheses, $K$ is the field of fractions of $R$. Hopf order means Hopf order over $R$. \par \smallskip

\begin{proposition}\cite[Lemma 1.1]{CM1} \label{dual}
Let $X$ be a Hopf order of $H$.
\begin{enumerate}
\item[{\it (i)}] The dual order $X^{\star}:=\{\varphi \in H^* : \varphi(X) \subseteq R\}$ is a Hopf order of $H^*$. \vspace{2pt}
\item[{\it (ii)}] The natural isomorphism $H \simeq H^{**}$ induces an isomorphism $X \simeq X^{\star \star}$ of Hopf orders. \vspace{2pt}
\end{enumerate}
\end{proposition}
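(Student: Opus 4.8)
The plan is to prove \textit{(i)} by first checking that $X^{\star}$ is a lattice of $H^*$ over $R$, and then verifying the five Hopf-order conditions one at a time, each being the exact dual of a defining condition for $X\subseteq H$. The only structural input beyond elementary linear algebra is that $X$ is finitely generated and projective over $R$; this is what makes $R$-linear duality well behaved and will be used repeatedly.

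First I would identify $X^{\star}=\{\varphi\in H^*:\varphi(X)\subseteq R\}$ with $\operatorname{Hom}_R(X,R)$. Restriction to $X$ gives an $R$-linear map $X^{\star}\to\operatorname{Hom}_R(X,R)$, and since $X\otimes_R K\cong H$ every $R$-linear functional $X\to R$ extends uniquely to a functional in $H^*$ that is $R$-valued on $X$; so this map is bijective. As $X$ is finitely generated projective, $\operatorname{Hom}_R(X,R)$ is again finitely generated projective, and the canonical map $\operatorname{Hom}_R(X,R)\otimes_R K\to\operatorname{Hom}_K(X\otimes_R K,K)=H^*$ is an isomorphism, duality commuting with the flat base change $R\hookrightarrow K$ for finitely generated projective modules. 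Hence $X^{\star}$ is an order of $H^*$.

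Next I would dualize the Hopf conditions, recalling that in $H^*$ the product is dual to $\Delta$, the unit is $\varepsilon$, the coproduct $\Delta_{H^*}$ is dual to the multiplication of $H$, the counit is evaluation at $1_H$, and the antipode is precomposition with $S$. The unit condition $\varepsilon\in X^{\star}$ is precisely $\varepsilon(X)\subseteq R$; the counit condition $\varphi(1_H)\in R$ follows from $1_H\in X$; and $S(X)\subseteq X$ yields the antipode condition, since $(\varphi\circ S)(X)=\varphi(S(X))\subseteq\varphi(X)\subseteq R$. For the product, if $\varphi,\psi\in X^{\star}$ and $x\in X$ then $(\varphi\psi)(x)=(\varphi\otimes\psi)(\Delta(x))\in R$ because $\Delta(X)\subseteq X\otimes_R X$ and $\varphi,\psi$ are $R$-valued on $X$. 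The coproduct condition $\Delta_{H^*}(X^{\star})\subseteq X^{\star}\otimes_R X^{\star}$ is the \emph{delicate} one: here I would use that, for $X$ finitely generated projective, the canonical map $X^{\star}\otimes_R X^{\star}\to(X\otimes_R X)^{\star}$ is an isomorphism, so $X^{\star}\otimes_R X^{\star}$ is identified inside $(H\otimes H)^*$ with $\{f:f(X\otimes_R X)\subseteq R\}$; then $\Delta_{H^*}(\varphi)$ evaluated on $x\otimes y\in X\otimes_R X$ equals $\varphi(xy)$, which lies in $R$ since $XX\subseteq X$. This establishes \textit{(i)}.

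For \textit{(ii)}, the evaluation pairing gives a Hopf algebra isomorphism $H\cong H^{**}$, under which $X^{\star\star}$ corresponds to $\{h\in H:\varphi(h)\in R\ \forall\,\varphi\in X^{\star}\}$; this set visibly contains $X$, and equality is exactly the reflexivity of $X$, namely that the biduality map $X\to\operatorname{Hom}_R(\operatorname{Hom}_R(X,R),R)$ is an isomorphism, which holds because $X$ is finitely generated projective. Since $H\cong H^{**}$ is an isomorphism of Hopf algebras carrying the Hopf order $X$ onto the Hopf order $X^{\star\star}$ (a Hopf order by applying \textit{(i)} twice), it restricts to the desired isomorphism $X\cong X^{\star\star}$. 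The hard part throughout is the coproduct condition in \textit{(i)}, precisely because it needs the identification $X^{\star}\otimes_R X^{\star}\cong(X\otimes_R X)^{\star}$ rather than a mere inclusion; the projectivity of $X$ over the Dedekind domain $R$ is what guarantees this, as well as the base-change and reflexivity statements, so I would foreground that hypothesis throughout.
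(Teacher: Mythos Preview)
Your argument is correct and is the standard one. Note, however, that the paper does not actually give a proof of this proposition: it is stated with a citation to \cite[Lemma 1.1]{CM1} and the surrounding text refers the reader there for proofs. What you have written is essentially the proof one finds in that reference (or reconstructs from first principles): identify $X^{\star}$ with $\operatorname{Hom}_R(X,R)$, use finite projectivity of $X$ to get that the dual is again a lattice, and then dualize each Hopf-order axiom, the only subtle point being the coproduct, which needs the isomorphism $X^{\star}\otimes_R X^{\star}\cong (X\otimes_R X)^{\star}$ for finitely generated projective $X$. Your emphasis on projectivity as the engine behind base change, this tensor--Hom identification, and reflexivity is exactly right.
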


The proofs of our main results are rooted in the following:

\begin{proposition}\cite[Proposition 1.2]{CM1} \label{character}
Let $X$ be a Hopf order of $H$. Then:
\begin{enumerate}
\item[{\it (i)}] Every character of $H$ belongs to $X^{\star}$. \vspace{2pt}
\item[{\it (ii)}] Every character of $H^*$ (cocharacter of $H$) belongs to $X$.
\end{enumerate}
\end{proposition}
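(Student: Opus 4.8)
The plan is to prove (i) directly, exploiting that every element of a Hopf order is integral over $R$, and then to deduce (ii) from (i) by passing to the dual. It is worth noting that for (i) I will not need the coalgebra structure of $X$ at all, only that $X$ is an $R$-order closed under multiplication and containing $1_H$.

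First I would treat (i). Let $\rho \colon H \to \mathrm{End}_K(V)$ be a finite-dimensional representation with character $\chi = \Tr \circ\, \rho$; the goal is $\chi(X) \subseteq R$, that is $\chi \in X^{\star}$. The crucial point is that each $x \in X$ is integral over $R$. Since $X$ is a finitely generated $R$-module that is closed under multiplication and contains $1_H$, and $R$ is Noetherian, I would apply the determinant trick (Cayley--Hamilton for finitely generated modules, which does not require $X$ to be free) to the $R$-linear endomorphism ``multiplication by $x$'' of $X$, producing a monic $f \in R[t]$ with $f(x) = 0$. Applying $\rho$ gives $f(\rho(x)) = 0$ in $\mathrm{End}_K(V)$, so every eigenvalue of $\rho(x)$ in an algebraic closure of $K$ is a root of $f$ and hence integral over $R$; consequently their sum $\chi(x) = \Tr(\rho(x))$ is integral over $R$, while at the same time $\rho(x)$ is a matrix over $K$, so $\chi(x) \in K$. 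Because $R$, being a Dedekind domain, is integrally closed in its field of fractions $K$, this forces $\chi(x) \in R$, proving $\chi \in X^{\star}$.

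For (ii), I would simply dualise. A cocharacter of $H$ is, by definition, a character of the dual Hopf algebra $H^*$ regarded inside $H^{**} \simeq H$. By Proposition \ref{dual}(i) the dual order $X^{\star}$ is a Hopf order of $H^*$, so part (i) applied to $H^*$ shows that every character of $H^*$ lies in $(X^{\star})^{\star} = X^{\star\star}$. Finally, Proposition \ref{dual}(ii) identifies $X^{\star\star}$ with $X$ under the canonical isomorphism $H \simeq H^{**}$, whence every cocharacter of $H$ belongs to $X$.

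The only substantive step is the integrality argument in (i)---establishing a monic relation over $R$ for an element of the merely projective (not necessarily free) order $X$ and transporting it through $\rho$ to control the eigenvalues of $\rho(x)$. Everything afterwards is formal: the integral-closedness of $R$ closes (i), and the reflexivity $X^{\star\star} \simeq X$ furnished by Proposition \ref{dual} closes (ii).
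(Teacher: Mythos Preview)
Your argument is correct. The paper does not supply its own proof of this proposition; it is quoted verbatim from \cite[Proposition 1.2]{CM1} in the preliminaries section, with the authors referring the reader there for the justification. Your reasoning---integrality of every $x\in X$ over $R$ via the determinant trick, transport of the monic relation through $\rho$ to force the eigenvalues (hence the trace) of $\rho(x)$ to be integral over $R$, and finally the integral closedness of $R$ in $K$---is exactly the classical proof and is what one finds in \cite{CM1}. The deduction of (ii) from (i) via Proposition~\ref{dual} is likewise the intended route. One small remark: in the Cayley--Hamilton step you correctly obtain $f$ annihilating the endomorphism ``left multiplication by $x$'' on $X$; the passage to $f(x)=0$ in $H$ uses that $1_H\in X$, so $f(x)=f(x)\cdot 1_H=0$. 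You mention this implicitly, but it is the point where the hypothesis $1_H\in X$ actually enters.
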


The following technical result often eases our task:

\begin{proposition}\cite[Proposition 1.9]{CM1} \label{subsquo}
Let $X$ be a Hopf order of $H$.
\begin{enumerate}
\item[{\it (i)}] If $A$ is a Hopf subalgebra of $H$, then $X\cap A$ is a Hopf order of $A$. \vspace{2pt}
\item[{\it (ii)}] If $\pi:H \rightarrow B$ is a surjective Hopf algebra map, then $\pi(X)$ is a Hopf order of $B$.
\end{enumerate}
\end{proposition}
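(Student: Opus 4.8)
The plan is to verify, for each of the two constructions, the two defining features of a Hopf order separately: first that the $R$-submodule in question is an order (lattice) of the relevant $K$-space, and then that it is stable under the five Hopf algebra operations. Throughout I will use repeatedly that $R$, being a Dedekind domain, is Noetherian and that over it a finitely generated module is projective precisely when it is torsion-free; since every submodule of a $K$-vector space is automatically torsion-free, the projectivity requirement will come for free once finite generation is established.

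For part (i), the module $X\cap A$ is a submodule of the finitely generated module $X$ over the Noetherian ring $R$, hence finitely generated, and it is torsion-free as a submodule of the $K$-vector space $A$; so it is projective. To see that it is a full lattice, i.e.\ that $(X\cap A)\otimes_R K\to A$ is an isomorphism, I would check that $X\cap A$ spans $A$ over $K$: given $a\in A$, since $X$ spans $H$ one can clear denominators to find $0\neq r\in R$ with $ra\in X$, and then $ra\in X\cap A$, so $a\in K\cdot(X\cap A)$. The closure of $X\cap A$ under unit, multiplication, counit, and antipode is immediate from the corresponding properties of $X$ together with the fact that $A$ is a Hopf subalgebra (so $1\in A$, $AA\subseteq A$, $S(A)\subseteq A$, and $\Delta(A)\subseteq A\otimes A$); for instance $(X\cap A)(X\cap A)\subseteq XX\cap AA\subseteq X\cap A$ and $S(X\cap A)\subseteq S(X)\cap S(A)\subseteq X\cap A$.

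The one delicate point, which I expect to be the main obstacle, is the coproduct: for $x\in X\cap A$ one has $\Delta(x)\in(X\otimes_R X)\cap(A\otimes_K A)$, and one must identify this intersection with $(X\cap A)\otimes_R(X\cap A)$ inside $H\otimes_K H$. I would establish the lemma $(X\otimes_R X)\cap(A\otimes_K A)=(X\cap A)\otimes_R(X\cap A)$ as follows. The quotient $X/(X\cap A)$ embeds into $H/A$, hence is torsion-free and therefore projective over the Dedekind domain $R$; consequently the inclusion $X\cap A\hookrightarrow X$ splits, giving $X=(X\cap A)\oplus C$ for some complement $C$. Tensoring with $K$ yields a decomposition $H=A\oplus KC$ and a corresponding four-term decomposition of $H\otimes_K H$ into $A\otimes A$, $A\otimes KC$, $KC\otimes A$, and $KC\otimes KC$; writing an element of $X\otimes_R X$ in the induced components and imposing that it lie in the summand $A\otimes_K A$ forces the three components involving $C$ to vanish, leaving $\Delta(x)\in(X\cap A)\otimes_R(X\cap A)$. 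This completes part (i).

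For part (ii) the verifications are lighter, since images interact well with all the structure maps. The module $\pi(X)$ is a homomorphic image of the finitely generated module $X$, hence finitely generated, and torsion-free inside $B$, hence projective; it spans $B$ because $\pi$ is surjective and $X$ spans $H$, so $\pi(X)\otimes_R K\to B$ is an isomorphism and $\pi(X)$ is an order. Stability under the operations then follows formally from $\pi$ being a Hopf algebra map: $1_B=\pi(1_H)\in\pi(X)$, $\pi(X)\pi(X)=\pi(XX)\subseteq\pi(X)$, $\varepsilon_B(\pi(X))=\varepsilon_H(X)\subseteq R$, $S_B(\pi(X))=\pi(S_H(X))\subseteq\pi(X)$, and $\Delta_B(\pi(X))=(\pi\otimes\pi)\Delta_H(X)\subseteq(\pi\otimes\pi)(X\otimes_R X)=\pi(X)\otimes_R\pi(X)$, the last equality being the (unproblematic) fact that the image of a tensor product of modules is the tensor product of the images. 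Hence $\pi(X)$ is a Hopf order of $B$.
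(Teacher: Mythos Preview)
The paper does not give its own proof of this proposition: it is quoted verbatim from \cite[Proposition 1.9]{CM1} as a preliminary result, with no argument supplied here. Your proof is correct and follows the standard line of reasoning one would expect; in particular, your handling of the only subtle point---showing $(X\otimes_R X)\cap(A\otimes_K A)=(X\cap A)\otimes_R(X\cap A)$ by splitting off a complement $C$ of $X\cap A$ in $X$ via projectivity of $X/(X\cap A)$ over the Dedekind domain $R$---is the natural way to proceed and matches how such statements are typically proved in the literature on orders.
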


\subsection{Drinfeld twist}\label{drt}

We succinctly recall here the basics of Drinfeld's deformation procedure of a Hopf algebra. \vspace{1pt} An invertible element $\Omega:=\sum \Omega^{(1)} \otimes \Omega^{(2)} \in H \otimes H$ is called a {\it twist} for $H$ provided that:
$$\begin{array}{c}
(1_H \otimes \Omega)(id \otimes \Delta)(\Omega)=(\Omega \otimes 1_H)(\Delta \otimes id)(\Omega), \quad \textrm{and} \vspace{2pt} \\
(\varepsilon \otimes id)(\Omega)=(id  \otimes \varepsilon)(\Omega)=1_H.
\end{array}$$
The {\it Drinfeld twist} of $H$ is the new Hopf algebra $H_{\Omega}$ constructed as follows: $H_{\Omega}=H$ as an algebra, the counit is that of $H$, and the coproduct and antipode differ from those in $H$ in the following way:
$$\Delta_{\Omega}(h)={\Omega} \Delta(h){\Omega}^{-1} \qquad \textrm{and} \qquad S_{\Omega}(h)=Q_{\Omega}\hspace{1pt}S(h)\hspace{0.5pt}Q_{\Omega}^{-1} \qquad \forall h \in H.$$
Here, $Q_{\Omega}:=\sum {\Omega}^{(1)}S({\Omega}^{(2)})$ and $Q_{\Omega}^{-1}=\sum S({\Omega}^{-(1)})\Omega^{-(2)}.$ \vspace{1pt} For the latter we similarly write ${\Omega}^{-1}=\sum {\Omega}^{-(1)} \otimes {\Omega}^{-(2)}$. \vspace{-0.5mm}

\subsection{Twists of group algebras from 2-cocycles on abelian subgroups} \enlargethispage{1mm}
We next describe Movshev's method \cite[Section 1]{Mv} of constructing a twist for a group algebra from a $2$-cocycle on an abelian subgroup. \par \vspace{2pt}

Let $G$ be a finite group and $M$ an abelian subgroup of $G$. The group algebra $K\hspace{-0.9pt}M$ is a Hopf subalgebra of $K\hspace{-0.9pt}G$. Suppose that $\car K \nmid \vert M \vert$ and that $K$ is large enough for $K\hspace{-0.9pt}M$ to split. (Here and below, we use the term \emph{split} in the sense of \cite[Definition 7.12]{CR}: every\vspace{-1.5pt}  irreducible representation -corepresentation when dealing with coalgebras- is absolutely irreducible.) Consider the character group $\widehat{M}$ of $M$. The Wedderburn decomposition of $K\hspace{-0.9pt}M$ is provided by the complete set of orthogonal primitive\vspace{-1pt} idempotents $\{e_{\phi}\}_{\phi \in \widehat{M}},$ where $e_{\phi}$ is given by the formula:
$$e_{\phi} = \frac{1}{\vert M \vert } \sum_{v \in M} \phi(v^{-1}) v.$$
If $\omega: \widehat{M} \times \widehat{M} \rightarrow K^{\times}$ is a normalized 2-cocycle, then
$$\Omega_{M, \omega}:=\sum_{\phi,\psi \in \widehat{M}} \omega(\phi,\psi) e_{\phi} \otimes e_{\psi}$$
is a twist for $K\hspace{-0.9pt}M$, and, consequently, for $K\hspace{-0.9pt}G$. \par \vspace{2pt}

Later, we will need to know how a twist of this type is carried\vspace{1pt} under an automorphism $f:G \rightarrow G$ (in particular, under conjugation). We can\vspace{-1pt} carry $\phi$ to a character $\phi^f$ of $f(M)$ and $\omega$ to a normalized $2$-cocycle $\omega^f$ on $\widehat{f(M)}$ in the\vspace{-0.5pt} natural way: $\phi^f = \phi \circ (f\vert_M)^{-1}$ and $\omega^f = \omega \circ (\widehat{f\vert_M} \times \widehat{f\vert_M})$, respectively. For\vspace{1.5pt} the isomorphism $f:K\hspace{-0.9pt}G \mapsto K\hspace{-0.9pt}G, g \mapsto f(g),$ we have $f(e_{\phi})=e_{\phi^f}$ and $(f \otimes f)(\Omega_{M, \omega}) = \Omega_{f(M), \omega^f}$. Then:

\begin{remark}\label{twistiso}
The map $f:K\hspace{-0.9pt}G \mapsto K\hspace{-0.9pt}G$ establishes a Hopf algebra isomorphism between $(K\hspace{-0.9pt}G)_{\Omega_{M, \omega}}$ and $(K\hspace{-0.9pt}G)_{\Omega_{f(M), \omega^f}}$.
\end{remark}
\enlargethispage{\baselineskip}

It is shown in \cite[Corollary 3.6]{AEGN} that $(K\hspace{-0.9pt}G)_{\Omega_{M, \omega}}$ is cosemisimple. The Wedderburn decomposition of $(K\hspace{-0.9pt}G)_{\Omega_{M, \omega}}$ at the coalgebra level was described by Etingof and Gelaki in \cite[Section 3]{EG1}. We\vspace{1pt}  summarize \cite[Propositions 3.1, 4.1, and 4.2]{EG1} and \linebreak \cite[Propositions 2.1 and 2.2]{CM2} \hspace{2pt}in \hspace{1pt}the \hspace{1pt}following result. Recall that\vspace{-1pt} $\omega$ is called non-degenerate if\hspace{-1pt} the\hspace{-1pt} skew-symmetric\hspace{-1pt} pairing \hspace{-0.5pt}$\mathcal{B}_{\omega}\hspace{-1pt}:\hspace{-1pt}\widehat{M} \times \widehat{M} \rightarrow K^{\times}, (\phi,\psi) \mapsto \frac{\omega(\phi,\psi)}{\omega(\psi,\phi)},$ is\linebreak non-degenerate.

\enlargethispage{\baselineskip}
\begin{proposition}\label{decomp}
Let $\{\tau_{\ell}\}_{\ell=1}^n$ be a set of representatives of the double cosets of $M$ in $G$. Then:
\begin{enumerate}
\item[{\it (i)}] As a coalgebra, $(K\hspace{-0.9pt}G)_{\Omega_{M, \omega}}$ decomposes as the direct sum of subcoalgebras
$$(K\hspace{-0.9pt}G)_{\Omega_{M, \omega}} = \bigoplus_{\ell=1}^n\hspace{2pt} K(M\tau_{\ell} M).$$

\item[{\it (ii)}] Suppose\vspace{-2pt} that $K$ is large enough so that $(K\hspace{-0.9pt}G)_{\Omega_{M, \omega}}$ splits as a coalgebra. If $M \cap (\tau_{\ell} M \tau_{\ell}^{-1}) =\{1\}$ and $\omega$ is non-degenerate, then $K(M\tau_{\ell} M)$ is isomorphic to a matrix coalgebra of size $\vert M \vert.$ Moreover, the irreducible cocharacter of $(K\hspace{-0.9pt}G)_{\Omega_{M, \omega}}$ attached to $K(M\tau_{\ell} M)$ is $c_{\hspace{0.8pt}\tau_{\ell}}:=|M|e_{\varepsilon}\tau_{\ell}e_{\varepsilon}$.
\end{enumerate}
\end{proposition}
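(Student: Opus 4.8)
The plan is to derive both parts directly from the shape of the twisted comultiplication $\Delta_\Omega(h)=\Omega\Delta(h)\Omega^{-1}$, using repeatedly that $\Omega=\sum_{\phi,\psi}\omega(\phi,\psi)e_\phi\otimes e_\psi$ and its inverse $\Omega^{-1}=\sum_{\phi,\psi}\omega(\phi,\psi)^{-1}e_\phi\otimes e_\psi$ both lie in $K\hspace{-0.9pt}M\otimes K\hspace{-0.9pt}M$.

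For part (i), the double cosets partition $G$, so $K\hspace{-0.9pt}G=\bigoplus_\ell K(M\tau_\ell M)$ already holds as vector spaces, and the counit of $(K\hspace{-0.9pt}G)_\Omega$ is that of $K\hspace{-0.9pt}G$; it thus suffices to see that each summand is stable under $\Delta_\Omega$. For $g\in G$ I would compute
$$\Delta_\Omega(g)=\Omega(g\otimes g)\Omega^{-1}=\sum_{\phi,\psi,\phi',\psi'}\omega(\phi,\psi)\,\omega(\phi',\psi')^{-1}\,(e_\phi\,g\,e_{\phi'})\otimes(e_\psi\,g\,e_{\psi'}).$$
Each factor $e_\phi g e_{\phi'}$ lies in $K\hspace{-0.9pt}M\cdot g\cdot K\hspace{-0.9pt}M\subseteq K(MgM)$, whence $\Delta_\Omega(g)\in K(MgM)\otimes K(MgM)$. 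Since $MgM=M\tau_\ell M$ for the appropriate $\ell$, this gives $\Delta_\Omega(K(M\tau_\ell M))\subseteq K(M\tau_\ell M)\otimes K(M\tau_\ell M)$ and proves (i).

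For part (ii), assume $M\cap(\tau M\tau^{-1})=\{1\}$. Then $(v,w)\mapsto v\tau w$ is a bijection $M\times M\to M\tau M$, so $\dim K(M\tau M)=|M|^2$ and, changing basis from $\{v\}$ to $\{e_\phi\}$, the family $f_{\phi,\psi}:=e_\phi\tau e_\psi$ ($\phi,\psi\in\widehat{M}$) is a basis. Using $\Delta(e_\phi)=\sum_{\alpha\beta=\phi}e_\alpha\otimes e_\beta$, the same computation as above yields
$$\Delta_\Omega(f_{\phi,\psi})=\sum_{\alpha\beta=\phi,\ \gamma\delta=\psi}\omega(\alpha,\beta)\,\omega(\gamma,\delta)^{-1}\,f_{\alpha,\gamma}\otimes f_{\beta,\delta},\qquad \varepsilon(f_{\phi,\psi})=\delta_{\phi,\varepsilon}\,\delta_{\psi,\varepsilon}.$$
I read these formulas as exhibiting a coalgebra isomorphism $K(M\tau M)\cong C_1\otimes C_2$, $f_{\phi,\psi}\mapsto u_\phi\otimes w_\psi$, where $C_1$ has basis $\{u_\phi\}$ with $\Delta(u_\phi)=\sum_{\alpha\beta=\phi}\omega(\alpha,\beta)u_\alpha\otimes u_\beta$, and $C_2$ is the analogous coalgebra attached to $\omega^{-1}$.

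The crux is then to identify $C_1$ and $C_2$. I recognize $C_1$ as the linear dual of the twisted group algebra $K^{\omega}\widehat{M}$ (its dual basis $u_\phi$ corresponds to the products $\bar u_\alpha\bar u_\beta=\omega(\alpha,\beta)\bar u_{\alpha\beta}$), and likewise $C_2=(K^{\omega^{-1}}\widehat{M})^*$. The classical central-type dichotomy now applies: $\omega$ is non-degenerate precisely when $K^{\omega}\widehat{M}$ is a full matrix algebra $M_n(K)$, which forces $|M|=n^2$; dualizing, $C_1$ is a matrix coalgebra of size $n$, and the same holds for $C_2$ because $\omega^{-1}$ is non-degenerate as well. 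Hence $K(M\tau M)\cong C_1\otimes C_2$ is a matrix coalgebra of size $n\cdot n=|M|$, consistent with the cosemisimplicity of $(K\hspace{-0.9pt}G)_\Omega$ recalled above. I expect this step --- converting non-degeneracy of $\omega$ into simplicity of $K^{\omega}\widehat{M}$, together with the perfect-square order of $M$ --- to be the main obstacle, as it is the only place where non-degeneracy is genuinely used. Finally, the irreducible cocharacter of a matrix coalgebra is the sum of its diagonal comatrix units, which under the tensor factorization equals $\chi_{C_1}\otimes\chi_{C_2}$; identifying $\chi_{C_1}$ with the trace of $K^{\omega}\widehat{M}\cong M_n(K)$ and using that every $\bar u_\phi$ with $\phi\neq\varepsilon$ is traceless while $\mathrm{tr}(\bar u_\varepsilon)=n$, I get $\chi_{C_1}=n\,u_\varepsilon$ and $\chi_{C_2}=n\,w_\varepsilon$. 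Transporting back through $u_\varepsilon\otimes w_\varepsilon=f_{\varepsilon,\varepsilon}=e_\varepsilon\tau e_\varepsilon$ gives $c_\tau=n^2\,e_\varepsilon\tau e_\varepsilon=|M|\,e_\varepsilon\tau e_\varepsilon$, the asserted formula.
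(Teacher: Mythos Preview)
Your argument is correct. The paper does not supply its own proof of this proposition; it is stated as a summary of \cite[Propositions~3.1, 4.1, 4.2]{EG1} and \cite[Propositions~2.1, 2.2]{CM2}, and your route---showing $\Delta_\Omega(g)\in K(MgM)\otimes K(MgM)$ because $\Omega,\Omega^{-1}\in K\hspace{-0.9pt}M\otimes K\hspace{-0.9pt}M$, then tensor-factoring $K(M\tau M)\cong (K^{\omega}\widehat{M})^*\otimes (K^{\omega^{-1}}\widehat{M})^*$ via the basis $f_{\phi,\psi}=e_\phi\tau e_\psi$---is exactly the standard one used in those references.

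One small point worth tightening: you invoke that $K^{\omega}\widehat{M}\cong M_n(K)$ over $K$, which strictly speaking is a statement over a splitting field, while the hypothesis only says that $(K\hspace{-0.9pt}G)_{\Omega}$ splits as a coalgebra. You can sidestep this entirely by noting that non-degeneracy of $\omega$ makes $A:=K^{\omega}\widehat{M}$ \emph{central} simple over $K$ and that $K^{\omega^{-1}}\widehat{M}\cong A^{\mathrm{op}}$; then $K(M\tau M)^*\cong A\otimes_K A^{\mathrm{op}}\cong \mathrm{End}_K(A)\cong M_{|M|}(K)$ automatically, and the cocharacter is the preimage of the trace on $\mathrm{End}_K(A)$, which your computation $\mathrm{tr}(\bar u_\phi\otimes \bar w_\psi)=|M|\,\delta_{\phi,\varepsilon}\delta_{\psi,\varepsilon}$ still yields. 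Alternatively, one may simply observe (as the references implicitly do) that the coalgebra-splitting hypothesis on $(K\hspace{-0.9pt}G)_\Omega$ can be arranged so that $K$ also splits $K^{\omega}\widehat{M}$, since both are achieved by adjoining enough roots of unity. Either way your conclusion and the formula $c_\tau=|M|\,e_\varepsilon\tau e_\varepsilon$ stand.
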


Recall from \cite[page 366]{Ka2} that $M$ is said to be of symmetric type if $M \simeq E \times E$ for some group $E$. We know by \cite[Theorems 1.9, 2.8, and 2.11]{Ka2} that $M$ admits a non-degenerate $2$-cocycle if and only if $M$ is of symmetric type. We will use the terminology \emph{central type} instead, which is the standard nowadays in this setting and applies to arbitrary groups, not necessarily abelian. See, for instance, the introductions of \cite{BG} and \cite{GS}. We stress that the property of being non-degenerate for a $2$-cocycle is preserved under multiplication by coboundaries; so it is indeed a property of its cohomology class.
\par \vspace{2pt}

The following proposition will be crucial in the proof of our main results:

\begin{proposition}\label{key}
Keep hypotheses and notation as in Proposition \ref{decomp}(ii). Set $\tau = \tau_{\ell}$ for short. Let $\chi:G \rightarrow K$ be a character. Then:
$$(\chi \otimes id)\Delta_{\Omega}(c_{\hspace{0.8pt}\tau})=\frac{1}{\vert M \vert}\sum_{g \in M \tau M} \chi(g)g.$$
\end{proposition}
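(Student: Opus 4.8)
The plan is to reduce the statement to the \emph{untwisted} coproduct and then exploit that $\chi$ is a class function. Writing $e_\varepsilon=\frac{1}{|M|}\sum_{v\in M}v$ and using that $M\cap(\tau M\tau^{-1})=\{1\}$ makes $(u,v)\mapsto u\tau v$ a bijection $M\times M\to M\tau M$, we have
\[
c_\tau=|M|\,e_\varepsilon\tau e_\varepsilon=\frac{1}{|M|}\sum_{g\in M\tau M}g .
\]
Since the original coproduct of $K\hspace{-0.9pt}G$ is $\Delta(g)=g\otimes g$, the target of the proposition is exactly $(\chi\otimes\id)\Delta(c_\tau)$. Hence it suffices to prove that the twist is invisible to this operation, i.e. $(\chi\otimes\id)\Delta_\Omega(c_\tau)=(\chi\otimes\id)\Delta(c_\tau)$.

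To compare the two sides I would expand everything in the idempotent basis $\{e_\phi\tau e_\psi\}_{\phi,\psi\in\widehat M}$ of the subcoalgebra $K(M\tau M)$. Because the $e_\phi$ are orthogonal idempotents, $\Omega$ and $\Omega^{-1}$ act diagonally: left multiplication by $\Omega$ on $e_\phi\tau e_\psi\otimes e_{\phi'}\tau e_{\psi'}$ produces the factor $\omega(\phi,\phi')$, and right multiplication by $\Omega^{-1}$ the factor $\omega(\psi,\psi')^{-1}$. Using $\Delta(e_\varepsilon)=\sum_{\phi}e_\phi\otimes e_{\phi^{-1}}$ and $\Delta(\tau)=\tau\otimes\tau$, a direct computation then gives
\[
\Delta_\Omega(c_\tau)=|M|\sum_{\phi,\psi\in\widehat M}\omega(\phi,\phi^{-1})\,\omega(\psi,\psi^{-1})^{-1}\;e_\phi\tau e_\psi\otimes e_{\phi^{-1}}\tau e_{\psi^{-1}},
\]
while $\Delta(c_\tau)$ is the same expression with every $\omega$-factor replaced by $1$. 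In particular the coefficients of the diagonal terms ($\phi=\psi$) coincide, since there the factor is $\omega(\phi,\phi^{-1})\omega(\phi,\phi^{-1})^{-1}=1$; thus the difference $\Delta_\Omega(c_\tau)-\Delta(c_\tau)$ is supported on terms whose first tensor leg is $e_\phi\tau e_\psi$ with $\phi\neq\psi$.

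The last ingredient is that $\chi$ annihilates these off-diagonal legs. For $m\in M$ one has $m\,e_\phi=\phi(m)\,e_\phi$, whence $m(e_\phi\tau e_\psi)m^{-1}=\phi(m)\psi(m)^{-1}\,e_\phi\tau e_\psi$; as $\chi$ is a class function this forces $\chi(e_\phi\tau e_\psi)=\phi(m)\psi(m)^{-1}\chi(e_\phi\tau e_\psi)$ for every $m$, so $\chi(e_\phi\tau e_\psi)=0$ whenever $\phi\neq\psi$. Applying $\chi\otimes\id$ to $\Delta_\Omega(c_\tau)-\Delta(c_\tau)$ therefore gives $0$, which proves the identity $(\chi\otimes\id)\Delta_\Omega(c_\tau)=(\chi\otimes\id)\Delta(c_\tau)$ and hence the proposition.

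The only real content lies in the two short computations above, and the point I expect to be the crux is that they dovetail perfectly: the nontrivial cocycle factor $\omega(\phi,\phi^{-1})\omega(\psi,\psi^{-1})^{-1}$ survives only off the diagonal $\phi=\psi$, which is precisely where $\chi$ vanishes, while on the diagonal it collapses to $1$. Notably, beyond $M$ being abelian and $\chi$ a class function, the argument uses only that $\Omega$ has the diagonal form $\sum_{\phi,\psi}\omega(\phi,\psi)\,e_\phi\otimes e_\psi$ with $\omega$-values in $K^\times$; non-degeneracy of $\omega$ enters only through Proposition \ref{decomp}, which guarantees that $c_\tau$ is a genuine irreducible cocharacter.
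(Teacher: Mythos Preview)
Your proof is correct and follows essentially the same route as the paper: expand $\Delta_\Omega(c_\tau)$ in the idempotent basis, observe that the cocycle factor $\omega(\phi,\phi^{-1})\omega(\psi,\psi^{-1})^{-1}$ is $1$ on the diagonal, and use that $\chi$ is a class function to kill the off-diagonal terms, thereby reducing to the untwisted coproduct. The only cosmetic difference is how you verify $\chi(e_\phi\tau e_\psi)=0$ for $\phi\neq\psi$: the paper uses the cyclic property $\chi(e_\phi\tau e_\psi)=\chi(\tau e_\psi e_\phi)$ together with orthogonality of the idempotents, whereas you use the conjugation eigenvalue argument $m(e_\phi\tau e_\psi)m^{-1}=\phi(m)\psi(m)^{-1}e_\phi\tau e_\psi$; both are immediate consequences of $\chi$ being a class function.
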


\begin{proof}
The proof is a variation of that of \cite[Proposition 3.1(iii)]{CM2}. We compute:
$$\begin{array}{rl}
(\chi \otimes id)\Delta_{\Omega}(c_{\hspace{0.8pt}\tau}) & \hspace{-1mm} \overset{\text{\ding{172}}}{=} \hspace{1mm} {\displaystyle  \vert M \vert \sum_{\lambda,\hspace{0.5pt} \rho \in \widehat{M}} \omega(\lambda,\lambda^{-1})\omega^{-1}(\rho,\rho^{-1})\chi(e_{\lambda}\tau e_{\rho}) e_{\lambda^{-1}}\tau e_{\rho^{-1}}} \vspace{3pt} \\
 & \hspace{-1mm} \overset{\text{\ding{173}}}{=} \hspace{1mm} {\displaystyle  \vert M \vert \sum_{\lambda,\hspace{0.5pt}\rho \in \widehat{M}} \omega(\lambda,\lambda^{-1})\omega^{-1}(\rho,\rho^{-1})\chi(\tau e_{\rho}e_{\lambda}) e_{\lambda^{-1}}\tau e_{\rho^{-1}}} \vspace{3pt} \\
 & \hspace{-1mm} \overset{\text{\ding{174}}}{=} \hspace{1mm} {\displaystyle  \vert M \vert \sum_{\lambda \in \widehat{M}} \omega(\lambda,\lambda^{-1})\omega^{-1}(\lambda,\lambda^{-1})\chi(\tau e_{\lambda}) e_{\lambda^{-1}}\tau e_{\lambda^{-1}}} \vspace{3pt} \\
& \hspace{-1mm} \overset{\text{\ding{174}}}{=} \hspace{1mm} {\displaystyle  \vert M \vert \sum_{\lambda,\hspace{0.5pt}\rho \in \widehat{M}} \chi(\tau e_{\rho} e_{\lambda}) e_{\lambda^{-1}}\tau e_{\rho^{-1}}} \vspace{3pt} \\
& \hspace{-1mm} \overset{\text{\ding{173}}}{=} \hspace{1mm} {\displaystyle  \vert M \vert \sum_{\lambda,\hspace{0.5pt}\rho \in \widehat{M}} \chi(e_{\lambda} \tau e_{\rho} ) e_{\lambda^{-1}}\tau e_{\rho^{-1}}} \\
& \hspace{-1mm} \overset{\text{\ding{175}}}{=} \hspace{1mm} {\displaystyle (\chi \otimes id)\Delta(|M|e_{\varepsilon}\tau e_{\varepsilon})} \vspace{5pt} \\
& \hspace{-1mm} = \hspace{1mm} {\displaystyle \frac{\vert M \vert}{\hspace{4.5pt}\vert M \vert^2} \sum_{u,v \in M} \chi(u\tau v)u\tau v} \vspace{3pt} \\
& \hspace{-1mm} \overset{\text{\ding{176}}}{=} \hspace{1mm} {\displaystyle \frac{1}{\vert M \vert} \sum_{g \in M \tau M} \chi(g)g.}
\end{array}$$
Here, we used:
\begin{enumerate}
\item[\ding{172}] Definition of $\Delta_{\Omega}$ and Equation 2.2 in \cite[page 141]{CM2}. \vspace{1pt}
\item[\ding{173}] That $\chi$ is a character: $\chi(gh)=\chi(hg)$ for all $g,h \in G$. \vspace{1pt}
\item[\ding{174}] That $\{e_{\phi}\}_{\phi \in \widehat{M}}$ is a complete set of orthogonal idempotents in $K\hspace{-0.9pt}M$.  \vspace{1pt}
\item[\ding{175}] That $\Delta(e_{\phi})=\sum_{\lambda \in \widehat{M}} \hspace{2pt} e_{\lambda} \otimes e_{\lambda^{-1}\phi}$. \vspace{1pt}
\item[\ding{176}] That $u\tau v$ runs one-to-one all elements of $M\tau M$ since $\vert M \tau M \vert = \vert M \vert^2$.
\end{enumerate}
\end{proof}

\section{Strategy of proof and framework of application}\label{stfrm}

We expound here in general terms the strategy that we will use to prove the non-existence of integral Hopf orders for a twist of several group algebras of simple groups of Lie type. This strategy suitably modifies that employed in \cite[Section 3]{CM2} for the alternating groups. The modification consists in the single use of the induced character from a Sylow subgroup. This greatly widens the range of applications.

\subsection{Strategy}
Let $K$ be a number field and $R$ a Dedekind domain such that \linebreak $\Oint_K \subseteq R\subset K$. Consider the Hopf algebra $(K\hspace{-0.9pt}G)_{\Omega_{M, \omega}}$ as in the previous section. Suppose that $(K\hspace{-0.9pt}G)_{\Omega_{M, \omega}}$ admits a Hopf order $X$ over $R$. The aim will be to prove that for our choice of groups and twists the fraction $\frac{1}{\vert M \vert}$ belongs to $R$. Notice\vspace{-1pt} that if $\frac{1}{\vert M \vert} \in R$, then the group ring $RG$ is closed\vspace{-1pt} under $\Delta_{\Omega}, \varepsilon_{\Omega}$ and $S_{\Omega}$ and $RG$ is a Hopf order of $(K\hspace{-0.9pt}G)_{\Omega_{M, \omega}}$ over $R$. Our result will tell that the condition $\frac{1}{\vert M \vert} \in R$ is also necessary for $(K\hspace{-0.9pt}G)_{\Omega_{M, \omega}}$ to admit a Hopf order over $R$. This implies,\vspace{1.5pt} in particular, that Hopf orders of $(K\hspace{-0.9pt}G)_{\Omega_{M, \omega}}$ over $R$ do not exist when $R=\Ow_K$. \par \vspace{2pt}

For our aim, we can assume first, without loss of generality, that $\omega$ is non-degenerate. If $\omega$ is degenerate, we can\vspace{-0.2pt} replace $M$ by a subgroup $M_{r}$ of $M$ and $\omega$ by a non-degenerate $2$-cocycle $\omega_{r}$ on $\widehat{M_{r}}$ such\vspace{-1.2pt} that $\Omega_{M,\omega}=\Omega_{M_{r},\omega_{r}}$. This is achieved by factoring out $\widehat{M}$ by the radical of the skew-symmetric pairing\vspace{0.5pt} associated to $\omega$. We can assume secondly that $K$ is large enough so that $(K\hspace{-1pt}G)_{\Omega_{M,\omega}}$ splits\vspace{0.6pt} as an algebra and as a coalgebra. The reason for this is the following. The Hopf algebra $(K\hspace{-1pt}G)_{\Omega_{M,\omega}}$ splits at the level of both structures by a finite field extension $L/K$. Let $\bar{R}$ denote the integral closure of $R$ in $L$. Then, $\bar{R}$ is a Dedekind domain, which contains $\Oint_L$, and $X \otimes_R \bar{R}$ is a Hopf order of $(LG)_{\Omega_{M,\omega}}$ over $\bar{R}$. \par \vspace{2pt}

In our setting, $M$ will be a $p$-group and there will be a Sylow $p$-subgroup $U$ of $G$ set in advance. Remark \ref{twistiso} will allow us to assume that $M$ is a subgroup of $U$. We will work with the induced character $\chi:=\Ind_U^G(\Uno_U)$. Set $P=\bigcup_{g\in G} g \hspace{0.25pt} U\hspace{-1.25pt} g^{-1}$. By construction, $\chi$ vanishes outside $P$. Its values on the identity and on $u\in U$ are:
\begin{equation}\label{gchi}
\begin{array}{rl}
\chi(1) = & \hspace{-6pt} {\displaystyle \frac{|G|}{|U|}}, \vspace{6pt} \\
\chi(u) = & \hspace{-6pt} {\displaystyle \frac{1}{|U|}\sum_{\begin{subarray}{c} g \in G \\ gug^{-1}\in U \end{subarray}} \Uno_U (g ug^{-1}) = \frac{1}{|U|}\cdot \#\{g\in G : gug^{-1}\in U\}.}
\end{array}
\end{equation}

Next we will find $\tau \in G$ such that $M \cap (\tau M \tau^{-1}) =\{1\}$. We will consider the irreducible cocharacter $c_{\tau}$ of $(K\hspace{-0.9pt}G)_{\Omega_{M, \omega}}$ attached to $K(M\tau M)$. By Proposition \ref{character}(ii), $c_{\tau}$ belongs to $X$. Using that $X$ is closed under the coproduct, Proposition \ref{character}(i) and Proposition \ref{key}, we obtain that the following element $y_{\chi}$ belongs to $X$:
\begin{equation}\label{ychi}
y_{\chi}:= (\chi \otimes id)\Delta_{\Omega}(c_{\tau})=\frac{1}{\vert M \vert}\sum_{g \in M \tau M} \chi(g)g.
\end{equation}
Then, $y_{\chi}^2$ belongs to $X$ as well. Proposition \ref{character}(i) yields that $\chi(y_{\chi}^2) \in R$. The fraction $\frac{1}{\vert M \vert}$ will appear as a factor of $\chi(y_{\chi}^2)$ in a way that we could derive that $\frac{1}{\vert M \vert} \in R$ by applying Bezout's identity. \par \vspace{2pt}

The computation of $\chi(y_{\chi}^2)$ can be carried out in a systematic manner as follows. The element $y_\chi$ in Equation \ref{ychi} reads as:
\begin{align*}
y_\chi & = \frac{1}{|M|}\sum_{v,v'\in M} \chi(v \tau v') v \tau v'.
\end{align*}
Then,
\begin{equation}\label{ychi2}
y_\chi^2 = \frac{1}{|M|^2} \sum_{u,u'\hspace{-1pt},v,v' \in M} \chi(\tau u'u)\chi(\tau v'v) u\tau u'v\tau v'.
\end{equation}
Evaluating $\chi$ at this element, we get:
\begin{equation}\label{chiychi2}
\hspace{4.6mm}\chi(y_\chi^2) = \frac{1}{|M|^2} \sum_{u,u'\hspace{-1pt},v,v' \in M} \chi(\tau u'u)\chi(\tau v'v) \chi(\tau u'v \tau v'u).
\end{equation}

Let $I_\chi$ be\vspace{1pt} the set of fibers of $\chi$ of non-zero values; that is, $I_\chi=\{\chi^{-1}(\gamma) : \gamma \in \Ima(\chi) \textrm{ and } \gamma \neq 0\}$. For $C \in I_\chi$ we\vspace{1pt} write $\chi(C)$ for the value that $\chi$ takes at any element in $C$ and we\vspace{1.2pt} set $M_C=\{v \in M : \tau v \in C\}$. \hspace{-2pt}(In our setting we will have $\{1\}\hspace{-0.75pt}\in\hspace{-0.75pt} I_\chi$ and $M_{\{1\}}\hspace{0.1pt}=\hspace{0.3pt}\emptyset$.) Notice that $C \subset P$. \par \vspace{2pt}

We now rearrange the subscripts in the sums in \eqref{ychi2} and \eqref{chiychi2} as follows. Firstly, we make the substitutions $u'u=x \in M_C$ and $v'v=x' \in M_{C'}$ and eliminate $u'$ and $v$. Secondly, we replace $v'$ by $v^{-1}$. Thirdly, we rename $u^{-1}v$ as $v$ in \eqref{chiychi2}. Thus, we arrive at the following formulas for $y_\chi^2$ and $\chi(y_\chi^2)$:
\begin{equation}\label{ychi2fib}
\hspace{5mm}y_\chi^2 =\frac{1}{|M|^2} \sum_{C, C' \in I_\chi} \chi(C)\chi(C') \hspace{-2pt}\sum_{\begin{subarray}{c} u,v \in M \vspace{0.4pt} \\ x\in M_C,\, x'\in M_{C'} \end{subarray}} u\tau x x'u^{-1}v \tau v^{-1},\vspace{1mm}
\end{equation}
\begin{equation}\label{chiychi2fib}
\hspace{-5mm}\chi(y_\chi^2) = \frac{1}{|M|} \sum_{C, C' \in I_\chi} \chi(C)\chi(C') \hspace{-2pt}\sum_{\begin{subarray}{c} v \in M \vspace{0.8pt} \\ x \in M_C,\, x'\in M_{C'} \end{subarray}} \chi(\tau x x'v \tau v^{-1}).
\end{equation}

Then, the calculation of $\chi(y_\chi^2)$ reduces to the following procedure:
\begin{enumerate}
\item Find out $(\tau M) \cap P$ and $M_C$ for every fiber $C\neq\{1\}$ in $I_\chi$. \vspace{2pt}
\item Detect for which $v\in M, x\in M_C$, and $x'\in M_{C'}$, the element $\tau x x'v\tau v^{-1}$ belongs to $P$. \vspace{2pt}
\item For those elements obtained in step 2, calculate $\chi(\tau x x'v\tau v^{-1})$.
\end{enumerate}
\vspace{0.15mm}

\subsection{Application to finite groups of Lie type}\label{sec:strategy-p}

We now explain how to apply the previous strategy to finite groups of Lie type with defining characteristic $p$ and a twist arising from an
abelian $p$-subgroup of central type. All unexplained notions, properties, and results recalled here can be found in the monograph \cite{MT}. \par \vspace{2pt} \enlargethispage{\baselineskip}

Let $\G$ be a simply connected simple algebraic group defined over $\overbar{\mathbb{F}_{\hspace{-0.25pt} p}}$\hspace{0.75pt} and $F$ a Steinberg endomorphism; i.e., an endomorphism of the abstract group $\G$ such that the subgroup $\G^F$ consisting\vspace{0.75pt} of fixed points is finite. We will consider the group $\G^F$ and its central quotient $\G^F/Z(\G^F)$. With a few exceptions, $\G^F/Z(\G^F)$ is always simple. All simple groups of Lie type arise in this form, except for the Tits group, and only in very few cases the defining characteristic $p$ is not uniquely determined, see \cite[page 3]{W3}. For our purpose, any possible realization in this form would work. We\vspace{0.75pt} denote by $\pi\colon \G^F\to \G^F/Z(\G^F)$ the natural projection. Recall that $\gcd(p,|Z(\G^F)|)=1$. \par \vspace{2pt}

The group $\G$ contains an $F$-stable maximal torus $\T$ and two opposite $F$-stable unipotent subgroups $\U$ and $\U^-$ which are normalized\vspace{0.75pt} by $\T$ and satisfy $\U \cap\U^-=\{1\}$. The subgroups $\U^F$ and $(\U^-)^F$ are Sylow $p$-subgroups of $\G^F$. We denote them by $U$ and $U^-$ respectively. The quotient $\pi(U)$, which is\vspace{0.75pt} isomorphic to $U$, is in turn a Sylow $p$-subgroup of $\G^F/Z(\G^F)$. The same holds for $U^-$. The groups $\B:=\T\ltimes\U$ and $\B^-:=\T\ltimes\U^-$ are opposite\vspace{0.75pt} Borel subgroups of $\G$. As before, we write $B=\B^F=\T^F\ltimes U$ and $B^-=(\B^-)^F=\T^F\ltimes U^-$. Then, $B\cap B^-=\T^F$. Finally, recall that there is an element $\dot{w}_0\in N_{\G^F}(\T)$ such that $\dot{w_0}\U\dot{w}_0^{-1}=\U^-$, so $\dot{w}_0U\dot{w}_0^{-1}=U^-$. For any $\sigma\in\dot{w}_0\T^F$ we have $\sigma  U\sigma^{-1}=U^-$. The coset $\dot{w}_0\T^F$ is an involution in $N_{\G^F}(\T)/\T^F$. Hence, $\sigma^{-1}\in\sigma\T^F$. \par \vspace{2pt}

Assume that $U$ (or equivalently, $\pi(U)$) contains an abelian subgroup $V$ of central type. Observe that $V \cap (\sigma V \sigma^{-1})=\{1\}$ for any $\sigma \in \dot{w}_0\T^F$ (or $\pi(\sigma)$), as that is a subset of $U \cap U^-$. The double cosets $V\sigma V$ and $V$ are disjoint. Then, $v \sigma v'\neq 1$ for all $v,\,v' \in V$. Similarly, $\pi(v\sigma v') \neq 1$, since, otherwise, we would have $\sigma\in UZ(\G^F)$, which is impossible because $UZ(\G^F)$ normalizes $U$. \par \vspace{2pt}

The next section deals mainly with families of groups in the following two scenarios:
\begin{enumerate}
\item The group $\G^F$, the subgroup $M=V$, the element $\tau=\sigma$, and the induced character $\chi={\rm Ind}_{U}^{\G^F}\hspace{-2pt}(\Uno_U)$. \vspace{2pt}
\item The group $\G^F/Z(\G^F)$, the subgroup $M=\pi(V)$, the element $\tau=\pi(\sigma)$, and the induced character $\chi={\rm Ind}_{\pi(U)}^{G}(\Uno_{\pi(U)})$.
\end{enumerate}

The following remark identifies when an element in $P$ of the form $\tau u' v \tau v'u$ is the identity element. This is necessary in practice for the evaluation of $\chi$, see Equation \ref{chiychi2}:

\begin{remark}\label{rem:identity}
\hspace{-1.1pt}In the above scenarios, observe that $\tau u' v \tau v'u=1$ if and only if $u'v=1$, $v'u=1$, and $\tau=\tau^{-1}$. For, suppose that $\tau u' v \tau v'u=1$. Then, $\tau u' v \tau=(v'u)^{-1}$ belongs to $M \cap (\tau M \tau)$. In the first scenario, when dealing with $\G^F$, we have the inclusions $M \cap (\tau M \tau) \subseteq M \cap \scalebox{1.12}{(}\tau M (\tau^{-1} \T^F)\scalebox{1.12}{)} \subseteq U \cap \scalebox{1.12}{(}U^-\hspace{0.5pt}\T^F\scalebox{1.12}{)}=\{1\}$. Hence, $\tau u' v \tau =1$ and $v'u=1$. This implies in turn that $\tau^2 \in M \cap (\tau M \tau).$ Consequently, $\tau^2=1$ and $u'v=1$. For the proof in the second scenario, when dealing with $\G^F/Z(\G^F)$, use in addition that $\scalebox{1.12}{(}U Z(\G^F)\scalebox{1.12}{)} \cap \scalebox{1.12}{(}U^-\hspace{0.5pt}\T^F\scalebox{1.12}{)}=Z(\G^F) \cap \T^F.$
\end{remark}

\section{Twists of several quasisimple groups of Lie type arising from a $p$-subgroup}
\enlargethispage{2.1\baselineskip}

In this section we apply the strategy expounded before to several\vspace{0.75pt} groups, which are either of the form $\G^F$ or $\G^F/Z(\G^F)$, for $\G$ a matrix group. Here, $U$ will be a group of unipotent upper or lower triangular matrices, and conjugation by $\sigma$ will interchange upper and lower triangular matrices.

\subsection{Statement}

The aim of this section is to establish the following result:

\begin{theorem}\label{main1}
Let $K$ be a number field and $R\subset K$ a Dedekind domain such that $\Oint_K \subseteq R$. Let
\begin{enumerate}
\item[$\sq$] $p$ be a prime number and $q=p^m$ with $m>1$. \vspace{2pt}
\item[$\sq$] $G$ be one of the following finite quasisimple groups: $\SL_2(q), \PSL_2(q), \SL_3(p)$ or the Suzuki group $^2\!B_2(q)$ (here, $p=2$ and $m$ is odd). \vspace{2pt}
\item[$\sq$] $M$ be any abelian $p$-subgroup of central type. \vspace{1pt}
\item[$\sq$] $\omega: \widehat{M} \times \widehat{M} \rightarrow K^{\times}$ be any normalized non-degenerate cocycle.
\end{enumerate}
If $(K\hspace{-1pt}G)_{\Omega_{M,\omega}}$ admits a Hopf order over $R$, then $\frac{1}{\vert M \vert} \in R$. Hence,
$(K\hspace{-1pt}G)_{\Omega_{M,\omega}}$ does not admit a Hopf order over $\Oint_K$. \par \vspace{2pt}
\end{theorem}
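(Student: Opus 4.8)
My plan is to run the machinery of Section \ref{stfrm} for each group on the list, the only group-dependent inputs being the induced character $\chi$ and the combinatorics of the double coset $M\tau M$. First I would assume $X$ is a Hopf order of $H:=(KG)_{\Omega_{M,\omega}}$ over $R$ and reduce, exactly as in Section \ref{stfrm}, to the case where $\omega$ is non-degenerate and $H$ splits as an algebra and a coalgebra (extend to a splitting field and replace $R$ by its integral closure, which preserves all hypotheses). Realizing $G$ as $\G^F$ or $\G^F/Z(\G^F)$, I take $U$ to be the Sylow $p$-subgroup of upper unitriangular matrices and, via Remark \ref{twistiso}, assume $M\leq U$. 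Next I pick $\sigma\in\dot{w}_0\T^F$, so that $\sigma U\sigma^{-1}=U^-$ and $M\cap(\sigma M\sigma^{-1})=\{1\}$, and set $\tau=\sigma$. By Propositions \ref{decomp} and \ref{character}(ii) the cocharacter $c_\tau=|M|e_\varepsilon\tau e_\varepsilon$ lies in $X$; since $X$ is closed under $\Delta_\Omega$, Propositions \ref{character}(i) and \ref{key} put $y_\chi=(\chi\otimes\mathrm{id})\Delta_\Omega(c_\tau)$ in $X$, hence $y_\chi^2\in X$ and $\chi(y_\chi^2)\in R$ by Proposition \ref{character}(i). Because $\chi$ is a permutation character, \eqref{chiychi2fib} shows $\chi(y_\chi^2)=n/|M|$ for some $n\in\Z$. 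The whole proof then reduces to showing $p\nmid n$: since $|M|$ is a $p$-power this gives $\gcd(n,|M|)=1$, and Bezout forces $1/|M|\in R$; as $1/|M|\notin\Z=\Oint_K\cap\Q$, no Hopf order over $\Oint_K$ can exist.

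\emph{The character.} For $\SL_2(q)$, $\PSL_2(q)$ and ${}^2\!B_2(q)$ I would exploit that the Sylow $p$-subgroups form a trivial-intersection family, so that a non-identity $p$-element lies in a unique conjugate of $U$; then \eqref{gchi} gives $\chi(1)=|G:U|$, $\chi(u)=|N_G(U):U|$ for $1\neq u\in P$, and $\chi=0$ off $P$, all non-zero values being coprime to $p$. Thus $I_\chi=\{\{1\},D\}$ with $D=P\setminus\{1\}$, and only $C=C'=D$ survives in \eqref{chiychi2fib} since $M_{\{1\}}=\emptyset$. For $\SL_3(p)$, where $U$ is not a TI-set, I would instead enumerate the few unipotent classes and read $\chi$ off \eqref{gchi}; the values remain coprime to $p$, but several non-trivial fibers now occur.

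\emph{Evaluation and the main obstacle.} Following the procedure of Section \ref{stfrm}, a direct trace computation --- after choosing the torus part of $\sigma$ so that $M\sigma M$ meets $P$ in a slice whose size is a multiple of $|M|$ --- makes $\sigma M\cap P$ and the sets $M_C$ explicit, with $M_D$ closed under inversion. I would then split \eqref{chiychi2fib} using Remark \ref{rem:identity}: the configurations with $\tau x x' v\tau v^{-1}=1$ occur exactly for $v=1$ and $x'=x^{-1}$, which is possible only when $\tau=\tau^{-1}$ (i.e.\ for $\PSL_2$, ${}^2\!B_2(q)$ and $\SL_3(p)$), and each such configuration contributes the $p$-unit $\chi(1)$, while the remaining ones contribute $\chi(D)$ or $0$. (For $\SL_2(q)$ with $p$ odd one has $\sigma^2=-\mathrm{Id}\neq\mathrm{Id}$, so the identity never appears and the count runs over genuine unipotent products only.) This writes $n$ as a $p$-unit times $a\,\chi(1)+b\,\chi(D)$, reducing everything to computing, modulo $p$, the numbers $a$ and $b$ of identity- and non-identity-unipotent configurations. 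I expect this last count to be the genuinely hard, group-specific part: it amounts to controlling modulo $p$ when a product of two elements of $M\sigma M\cap P$ is again a $p$-element, and it is here that the hypotheses $m>1$ (and $m$ odd for ${}^2\!B_2(q)$) and the precise structure of $M$ come into play. Once $p\nmid n$ is established, the theorem follows as above.
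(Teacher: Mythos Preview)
Your outline faithfully reproduces the strategy of Section~\ref{stfrm} and is the same approach the paper takes. But it is an outline, not a proof: the entire content of Theorem~\ref{main1} lies in the ``genuinely hard, group-specific part'' you explicitly defer, and several of your structural claims about that part are inaccurate.

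First, the paper does not simply take $\tau\in\dot w_0\T^F$ and compute. For $\SL_2(q)$ and $\PSL_2(q)$ it first normalizes $M$ inside $U$ (via diagonal conjugation and Remark~\ref{twistiso}) so that $\left(\begin{smallmatrix}1&1\\0&1\end{smallmatrix}\right)\in M$; without this one cannot assert that the elements $\left(\begin{smallmatrix}1&\pm2\\0&1\end{smallmatrix}\right)$ forming $M_{P^\bullet}$ actually lie in $M$. For $\SL_3(p)$ the paper \emph{abandons} the $\dot w_0$ recipe altogether: for the subgroup $M_1$ it takes $\tau=\left(\begin{smallmatrix}0&1&0\\0&0&1\\1&0&0\end{smallmatrix}\right)$, a $3$-cycle with $\tau\neq\tau^{-1}$, chosen because it makes the characteristic-polynomial condition on $\tau v$ solvable by hand. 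So your parenthetical ``(i.e.\ for $\PSL_2$, ${}^2\!B_2(q)$ and $\SL_3(p)$)'' is wrong for the $\tau$ actually used.

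Second, $\SL_3(p)$ requires an additional step you omit: there are (for $p$ odd) two classes $M_1,M_2$ of abelian $p$-subgroups of central type up to automorphism, and the paper treats each with its own $\tau$ and its own computation, then separately argues that every such $M$ is carried to $M_1$ or $M_2$ by an automorphism of $\SL_3(p)$.

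Third, your schematic ``$n=(\text{$p$-unit})\cdot(a\,\chi(1)+b\,\chi(D))$'' is too coarse. For $\PSL_2(q)$ the sum in \eqref{chiychi2fib} breaks into three pieces and the answer bifurcates according to whether $\sqrt{-4}\in\E$; for $\SL_3(p)$ there are two non-trivial fibers $C_{(2,1)}$ and $C_{(3)}$ with different $\chi$-values, and one must track which Jordan type each contributing product has. None of these computations is conceptually deep, but they are the proof, and your proposal does not supply them.
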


\begin{proof}
The proof is carried out in the following subsections.
\end{proof}

Before tackling the proof of Theorem \ref{main1}, we record the following consequence. Its proof is similar to that of \cite[Corollary 2.4]{CM1}. One only has to add the fact that, up to coboundaries, $\omega$ can be chosen in such a way that its image is contained in a cyclotomic number field, see \cite[Proof of Proposition 2.1.1]{Ka1}.

\begin{corollary}\label{complexif}
Keep the hypotheses on $(G,M,\omega)$ of Theorem \ref{main1}. Then, the complex semisimple Hopf algebra $(\Co G)_{\Omega_{M,\omega}}$ does not admit a Hopf order over any number ring. \vspace{0.5mm}
\end{corollary}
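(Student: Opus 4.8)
The plan is to deduce the complex statement from Theorem \ref{main1} by a base-change argument, exactly as in the passage from the main theorem to \cite[Corollary 2.4]{CM1}, the only genuinely new ingredient being a rationality reduction for the cocycle. First I would record that $(\Co G)_{\Omega_{M,\omega}}$ is in fact defined over a number field. By \cite[Proof of Proposition 2.1.1]{Ka1}, after replacing $\omega$ by a cohomologous cocycle --- which only changes $\Omega_{M,\omega}$ by a gauge transformation and hence yields an isomorphic twisted Hopf algebra --- we may assume that the image of $\omega$ lies in a cyclotomic field. Enlarging it if necessary so as to also contain the $\exp(M)$-th roots of unity, we obtain a cyclotomic (in particular, number) field $K$ such that the idempotents $e_\phi$, the twist $\Omega_{M,\omega}$, and the deformed comultiplication all have coefficients in $K$. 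Thus $(K\hspace{-1pt}G)_{\Omega_{M,\omega}}$ is a well-defined Hopf algebra over $K$ and $(\Co G)_{\Omega_{M,\omega}} \cong (K\hspace{-1pt}G)_{\Omega_{M,\omega}} \otimes_K \Co$.

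Next I would argue by contradiction. Suppose $(\Co G)_{\Omega_{M,\omega}}$ admits a Hopf order $X$ over a number ring $\Oint_F$, the ring of integers of a number field $F\subset\Co$; so $X$ is a finitely generated projective Hopf $\Oint_F$-algebra with $X\otimes_{\Oint_F}\Co\cong(\Co G)_{\Omega_{M,\omega}}$. I would then form the compositum $L=KF\subset\Co$ and set $R=\Oint_L$, a Dedekind domain containing $\Oint_K$. Base-changing, $X\otimes_{\Oint_F}\Oint_L$ is a Hopf order over $\Oint_L$ whose complexification is again $(\Co G)_{\Omega_{M,\omega}}$. Since $L\supseteq K$, the $L$-form $(LG)_{\Omega_{M,\omega}}$ of this complex Hopf algebra is available, and the crux is to identify $(X\otimes_{\Oint_F}\Oint_L)\otimes_{\Oint_L}L$ with $(LG)_{\Omega_{M,\omega}}$ as Hopf algebras over $L$; granting this, $X\otimes_{\Oint_F}\Oint_L$ is a Hopf order of $(LG)_{\Omega_{M,\omega}}$ over $R$.

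With this identification in hand, Theorem \ref{main1} applies with the Dedekind domain $R=\Oint_L$ (which contains $\Oint_K$) and forces $\frac{1}{\vert M\vert}\in\Oint_L$. But $\Oint_L$ is a ring of algebraic integers, so $\Oint_L\cap\Q=\Z$; since $M$ is a non-trivial group of central type (whence $\vert M\vert>1$), the rational number $\frac{1}{\vert M\vert}$ does not lie in $\Oint_L$. This contradiction shows that no Hopf order of $(\Co G)_{\Omega_{M,\omega}}$ over a number ring can exist.

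I expect the main obstacle to be the descent step, namely verifying that the $L$-Hopf algebra $X\otimes_{\Oint_F}L$ --- which a priori is only an $L$-form of $(\Co G)_{\Omega_{M,\omega}}$ --- is genuinely isomorphic to the specific form $(LG)_{\Omega_{M,\omega}}$. Here I would use that $L$ is large enough to split both the algebra and the coalgebra structures (as in the reductions carried out at the beginning of Section \ref{stfrm}), that twisting leaves the underlying algebra $\Co G$ unchanged, and the cosemisimplicity of $(\Co G)_{\Omega_{M,\omega}}$ from \cite{AEGN}, so that the Wedderburn and co-Wedderburn data over $L$ already pin down the Hopf structure up to the isomorphism supplied by the argument of \cite[Corollary 2.4]{CM1}. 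Everything else --- the base change of Hopf orders and the final contradiction via $\Oint_L\cap\Q=\Z$ --- is routine.
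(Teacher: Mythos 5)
Your proposal is correct and follows essentially the same route as the paper, which simply invokes the argument of \cite[Corollary 2.4]{CM1} together with the observation from \cite[Proof of Proposition 2.1.1]{Ka1} that $\omega$ may be replaced by a cohomologous cocycle with cyclotomic values, so that everything is defined over a number field and a base change to the compositum reduces the claim to Theorem \ref{main1}. The only (harmless) detail left implicit in your write-up is that this cohomologous replacement preserves non-degeneracy of $\omega$, a fact the paper records earlier, so the hypotheses of Theorem \ref{main1} still hold after the reduction.
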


\subsection{Special linear group $\SL_2(q)$}\label{subsec:SL2}
\enlargethispage{\baselineskip}

Let $p$ be a prime number and $q=p^m$, with $m>1$. Consider the group $G=\SL_2(q)$. The subgroup $U:=\left\{{\left(\begin{smallmatrix} 1 & a \\ 0 & 1\end{smallmatrix}\right)} : a \in \F_q\right\}$ is a Sylow $p$-subgroup of $G$. It is isomorphic to the elementary $p$-group $C_p^m$. \par \vspace{2pt}

Let $M$ be an abelian $p$-subgroup of $G$ of central type. By Sylow's theorem and Remark \ref{twistiso}, we can assume, without loss of generality, that $M$ is contained in $U$. The group of diagonal matrices in $\GL_2(q)$ acts by automorphisms on $G$, on $U$, and on the set of subgroups of $U$ of central type. The orbit of any such subgroup has a representative containing the matrix $\left(\begin{smallmatrix} 1 & 1 \\ 0 & 1\end{smallmatrix}\right)$. Let $M$ be one of these representatives. Then, $M=\left\{{\left(\begin{smallmatrix} 1 & a \\
 0 & 1\end{smallmatrix}\right)}: a \in \mathbb{E}\right\}$, where $\mathbb{E}$ is an additive subgroup of $\F_q$ isomorphic to $C_p^{2n}$ for some $n>0$. Our choice of $M$ ensures that $\F_p \subset \mathbb{E}$. Remark \ref{twistiso} allows us to make this choice for our purpose. \par \vspace{2pt}

We set $T=\left\{{\left(\begin{smallmatrix} t & 0 \\
0 & t^{-1}\end{smallmatrix}\right)} : t \in \F_q^\times\right\}$ and $B=T\ltimes U$. Furthermore,\vspace{1pt} we pick the element $\tau=\sigma=\left(\begin{smallmatrix} 0 & -1 \\ 1 & 0\end{smallmatrix}\right)$ in $N_G(T)$. Using Equation \ref{gchi}, one can check\vspace{1pt} that the non-zero values of $\chi={\rm Ind}_{U}^{G}(\Uno_U)$ are:
$$\chi(1)= q^2-1 \hspace{6mm} \textrm{and} \hspace{6mm} \chi \hspace{-1.5pt}\left(\begin{smallmatrix} 1 & a \\
0 & 1\end{smallmatrix}\right) = q-1, \hspace{3mm} \textrm{for } a \neq 0.\vspace{2pt}$$

In this case, the sets $P$ and $I_\chi$ particularize to $P=\{A\in\SL_2(q): \Tr(A)=2\}$ and $I_\chi=\{\{1\},P\menos\{1\}\}$.
Let $P^{\bullet}=P\menos\{1\}$. A direct calculation shows that $M_{P^{\bullet}}=\{1\}$ if $p=2$ and $M_{P^{\bullet}}=\left\{\left(\begin{smallmatrix}
1&2\\
0&1\end{smallmatrix}\right)\right\}$ otherwise. (We stress that $\left(\begin{smallmatrix} 1 & 2 \\ 0 & 1\end{smallmatrix}\right) \in M$ thanks to our choice of $M$.) Equation \ref{chiychi2fib} takes the following concrete form:
$$\begin{array}{l}
\chi(y_\chi^2) = {\displaystyle \frac{\chi(P^{\bullet})^2}{|M|} \sum_{v \in M} \chi
\hspace{-2pt}\left(\tau \hspace{-1pt}\left(\begin{smallmatrix} 1 & 2 \\ 0 & 1\end{smallmatrix}\right)^2 \hspace{-1.5pt} v\tau v^{-1}\right)} \vspace{7pt} \\
\phantom{\chi(y_\chi^2)} = {\displaystyle  \frac{(q-1)^2}{p^{2n}} \hspace{1pt} \sum_{a \in \E} \chi \Big(
 \hspace{-1.5pt}\left(\begin{smallmatrix} 0 & -1 \\ 1 & 0 \end{smallmatrix}\hspace{-0.1mm}\right)
 \hspace{-1.5pt}\left(\begin{smallmatrix} 1 & \hspace{-3.75mm}\phantom{-1}4 \\ 0 & 1 \end{smallmatrix}\hspace{-0.1mm}\right)
 \hspace{-1.5pt}\left(\begin{smallmatrix} 1 & \hspace{-3.75mm}\phantom{-1}a \\ 0 & 1\end{smallmatrix}\right)
 \hspace{-1.5pt}\left(\begin{smallmatrix} 0 & -1 \\ 1 & 0\end{smallmatrix}\hspace{-0.1mm}\right)
 \hspace{-1.5pt}\left(\begin{smallmatrix} 1 & -a \\ 0 & 1\end{smallmatrix}\right)\hspace{-1.5pt}
 \Big)} \vspace{7pt} \\
\phantom{\chi(y_\chi^2)} = {\displaystyle \frac{(q-1)^2}{p^{2n}} \hspace{1pt}\sum_{a \in \E} \chi \hspace{-1.5pt}\left(\begin{smallmatrix} -1 & \hspace{3pt}a \\
4+a & \hspace{3pt}-1-4a-a^2\end{smallmatrix}\right).}
\end{array}$$
Observe that $\left(\begin{smallmatrix} -1 & a \\ 4+a & \hspace{2pt}-1-4a-a^2\end{smallmatrix}\right)\in P$ if and only if $a=-2$.\vspace{-1pt} Then, the only non-zero term in this sum corresponds to $a=-2$. Its value\vspace{2pt} is $q^2-1$ if $p=2$ and $q-1$ otherwise. Hence:
$$\chi(y_\chi^2) = \frac{(q-1)^3(q+1)}{p^{2n}} \hspace{1.5mm} \textrm{ if } p=2 \hspace{5mm} \textrm{ and } \hspace{5mm}
\chi(y_\chi^2) = \frac{(q-1)^3}{p^{2n}} \hspace{1.5mm} \textrm{ if } p \neq 2.$$
In both cases, $\chi(y_\chi^2)$ is an irreducible fraction. Propositions \ref{character} and \ref{decomp}, together with Bezout's identity, yield $\frac{1}{p} \in R$. \par \vspace{2pt}

This finishes the proof of Theorem \ref{main1} for $\SL_2(q)$. \qed
\vspace{0.5mm}

\subsection{Projective special linear group $\PSL_2(q)$}\label{PSL(2,q)}

In this subsection we assume that $p$ is odd, since $\PSL_2(q)=\SL_2(q)$ when $p=2$ and this case was just treated. We denote by $\pi:\SL_2(q) \rightarrow \PSL_2(q)$ the natural projection. We retain the necessary notation from the preceding subsection. \par \vspace{2pt}

Our group $G$ is now $\PSL_2(q)$. Let $M$ be an abelian $p$-subgroup of $G$ of central type. Consider the subgroup $\pi(U)$, which is a Sylow $p$-subgroup of $G$. By Sylow's theorem and Remark \ref{twistiso}, we can assume that $M$ is contained in $\pi(U)$. An abelian subgroup $M$ of $\pi(U)$ of central type is of the form $\pi(N)$, where $N$ is abelian subgroup of $U$ of central type. Arguing as before with the action on $\PSL_2(q)$ induced by conjugation by a diagonal matrix in $\GL_2(q)$, we can pick a subgroup $M$ of square order $p^{2n}$ that contains $\pi \hspace{-2pt}\left(\begin{smallmatrix}1&1\\ 0&1\end{smallmatrix}\right)$ and is of the form $\left\{{\pi\hspace{-2pt}\left(\begin{smallmatrix} 1 & a \\ 0 & 1\end{smallmatrix}\right)}: a \in \mathbb{E}\right\}$, with $\mathbb{E}$ as above. Remark \ref{twistiso} allows us to reduce the proof to this case. \par \vspace{2pt}

Our element $\tau$ is now $\pi \hspace{-2pt} \left(\begin{smallmatrix} 0 & -1 \\ 1 & 0\end{smallmatrix}\right).$  We take $\chi=\Ind_{\pi(U)}^{G}(\Uno_{\pi(U)})$. One can check that the non-zero values of $\chi$ are:
$$\chi(1)= \frac{q^2-1}{2} \hspace{6.5mm} \textrm{and} \hspace{6.5mm} \chi \hspace{-1pt}\left( \pi \hspace{-2pt}\left(\begin{smallmatrix} 1 & a \\
0 & 1\end{smallmatrix}\right)\right) = \frac{q-1}{2}, \hspace{3mm} \textrm{for }a \neq 0.$$

One\vspace{1.25pt} can also verify that, in this case, $P=\{\pi(A) \in \PSL_2(q) : \Tr(A)=\pm 2\}$. Then, $I_\chi=\{\{1\}, P\menos\{1\}\}$. Set, as before, $P^{\bullet}=P\menos\{1\}$. A\vspace{1.5pt} direct calculation shows that $M_{P^{\bullet}}=\left\{\pi \hspace{-2pt}\left(\begin{smallmatrix} 1 & \pm 2\\ 0 & 1\end{smallmatrix}\right)\right\}$. Equation \ref{chiychi2fib} now reads as:
\begin{equation}\label{PSL2sum}
\begin{array}{ll}
\chi(y_\chi^2) = & \hspace{-2mm} {\displaystyle \frac{(q-1)^2}{4|M|}\left(2\sum_{v\in M}\chi \Big(\pi \hspace{-2pt}\left(\tau v\tau  v^{-1}\right)\hspace{-1.5pt}\Big)+\sum_{v\in M}\chi \Big(\pi \hspace{-2pt}\left(\tau \hspace{-1pt}\left(\begin{smallmatrix} 1 & 4 \\ 0 & 1\end{smallmatrix}\right)\hspace{-1pt} v\tau v^{-1}\right)\hspace{-1.5pt}\Big)\right.} \hspace{5mm} \\
 & \hspace{26mm} {\displaystyle \left.+\sum_{v\in M}\chi \Big(\pi \hspace{-2pt}\left(\tau \hspace{-1pt}\left(\begin{smallmatrix} 1 & -4 \\ 0 & 1\end{smallmatrix}\right)\hspace{-1pt} v \tau v^{-1}\right) \hspace{-1.5pt}\Big)\right).}
\end{array}
\end{equation}
We compute the value of the three summands between parentheses: \par \vspace{2pt}

Firstly, an element of the form
$$\pi \hspace{-2pt} \left(
\hspace{-1pt}\left(\begin{smallmatrix} 0 & -1 \\ 1 & 0\end{smallmatrix}\hspace{-0.1mm}\right)
\hspace{-1pt}\left(\begin{smallmatrix} 1 & \hspace{-3.75mm}\phantom{-1}a \\ 0 & 1\end{smallmatrix}\right)
\hspace{-1pt}\left(\begin{smallmatrix} 0 & -1 \\ 1 & 0\end{smallmatrix}\hspace{-0.1mm}\right)
\hspace{-1pt}\left(\begin{smallmatrix} 1 & \hspace{-3.75mm}\phantom{-1}a \\ 0 & 1\end{smallmatrix}\right)^{-1}
\right)
= \pi \hspace{-2pt}\left(\begin{smallmatrix}
-1 & a \\ a & -1-a^2 \end{smallmatrix}
\right)$$
belongs to $P$ if and only if $a=0$ (i.e., it is the identity element) or $a^2=-4$. The latter occurs if and only if $\E$ contains a square root of $-4$. In such a case, both roots are in $\E$ and the corresponding elements are non-trivial and distinct. Hence, the value of the first summand is $2\chi(1)=q^2-1$ if $\sqrt{-4} \notin \E$ or $2\hspace{1pt} \big(\chi(1)+2\chi \hspace{-1pt}\left(\pi \hspace{-2pt} \left(\begin{smallmatrix} 1 & 1 \\ 0 & 1 \end{smallmatrix}\right)\right)\hspace{-1pt}\big)=q^2+2q-3$ otherwise. \par \vspace{2pt}

Secondly, an element of the form
$$\pi \hspace{-2pt} \left(
\left(\begin{smallmatrix} 0 & -1 \\ 1 & 0\end{smallmatrix}\hspace{-0.1mm}\right)
\hspace{-1pt}\left(\begin{smallmatrix} 1 & \hspace{-3.75mm}\phantom{-1}4 \\ 0 & 1\end{smallmatrix}\right)
\hspace{-1pt}\left(\begin{smallmatrix} 1 & \hspace{-3.75mm}\phantom{-1}a \\ 0 & 1\end{smallmatrix}\right)
\hspace{-1pt}\left(\begin{smallmatrix} 0 & -1 \\ 1 & 0\end{smallmatrix}\hspace{-0.1mm}\right)
\hspace{-1pt}\left(\begin{smallmatrix} 1 & \hspace{-3.75mm}\phantom{-1}a \\ 0 & 1\end{smallmatrix}\right)^{-1}
\right)
= \pi \hspace{-2pt} \left(\begin{smallmatrix} -1 & a \\ 4+a  & \hspace{2pt} -1-4a-a^2\end{smallmatrix}
\right)$$
belongs to $P$ if and only if $a \in\{0,-2,-4\}$. The three elements obtained with these values of $a$ are all non-trivial and distinct. Hence, the second summand equals $\frac{3(q-1)}{2}$. \par \vspace{2pt}

Finally, and in a similar fashion, an element of the form
$$\pi \hspace{-2pt} \left(
\left(\begin{smallmatrix} 0 & -1 \\ 1 & 0\end{smallmatrix}\hspace{-0.1mm}\right)
\hspace{-1pt}\left(\begin{smallmatrix} 1 & -4 \\ 0 & 1\end{smallmatrix}\right)
\hspace{-1pt}\left(\begin{smallmatrix} 1 & \hspace{-3.75mm}\phantom{-1}a \\ 0 & 1\end{smallmatrix}\right)
\hspace{-1pt}\left(\begin{smallmatrix} 0 & -1 \\ 1 & 0\end{smallmatrix}\hspace{-0.1mm}\right)
\hspace{-1pt}\left(\begin{smallmatrix} 1 & \hspace{-3.75mm}\phantom{-1}a \\ 0 & 1\end{smallmatrix}\right)^{-1}
\right)
= \pi \hspace{-2pt} \left(\begin{smallmatrix} -1 & a \\ -4+a  & \hspace{2pt} -1+4a-a^2\end{smallmatrix}
\right)$$
belongs to $P$ if and only if $a \in\{0,2,4\}$. The elements obtained with these three values of $a$ are all non-trivial and distinct. The third summand equals $\frac{3(q-1)}{2}$ as well. \par \vspace{2pt}

In total, the value of the sum in \eqref{PSL2sum} is $q^2+3q-4$ if $\sqrt{-4} \notin \E$ and $q^2+5q-6$ otherwise. Therefore, we have:
$$\chi(y_\chi^2) = \frac{(q-1)^3}{4p^{2n}}(q+4) \hspace{1.5mm} \textrm{ if } \sqrt{-4} \notin \E \hspace{2mm} \textrm{ and } \hspace{2mm}
\chi(y_\chi^2) = \frac{(q-1)^3}{4p^{2n}}(q+6) \hspace{1.5mm} \textrm{ if } \sqrt{-4} \in \E.$$
In both cases, $\chi(y_\chi^2)$ belongs to $\Q \menos \Z$. It follows from this and Propositions \ref{character} and \ref{decomp} that $\frac{1}{p} \in R$. \par \vspace{2pt}

This finishes the proof of Theorem \ref{main1} for $\PSL_2(q)$. \qed

\begin{remark}
For an abelian $p$-subgroup $M$ of $\SL_2(q)$ of central type, $\pi \vert_M:M \rightarrow \pi(M)$ is an isomorphism. The natural projection induces a surjective Hopf algebra map $\pi:(K\SL_2(q))_{\Omega_{M,\omega}} \rightarrow (K\PSL_2(q))_{\Omega_{\pi(M),\omega^{\pi}}}$. When $q$ is odd, the statement for $\SL_2(q)$ follows from that for $\PSL_2(q)$ in virtue of Proposition \ref{subsquo}(ii).
\end{remark}

\subsection{Special linear group $\SL_3(p)$}\label{subsec:SL3}
We will deduce the desired statement from a slightly more general result. Let $p$ be a prime number\vspace{0.75pt} and $q=p^m$, with $m \geq 1$. We will initially work with $G=\SL_3(q)$. Consider the Sylow $p$-subgroup
$$U:=\left\{\left(\begin{smallmatrix}
1 & a & b \\
0 & 1 & c \\
0 & 0 & 1
\end{smallmatrix}\right) : a,b,c \in \F_q\right\}$$
and the maximal split torus
$$T:=\left\{\left(\begin{smallmatrix}
s & \hspace{3pt}0 & \hspace{2pt}0 \\
0 & \hspace{3pt}t & \hspace{2pt}0 \\
0 & \hspace{3pt}0 & \hspace{2pt}t^{-1}s^{-1}
\end{smallmatrix}\right) : s, t \in \F_q^\times\right\}.$$
Bear in mind that $N_G(U)=B=T\ltimes U$. \par \vspace{2pt}

We first calculate the values at $U$ of the character $\chi={\rm Ind}_{U}^{G}(\Uno_U)$. Every element in $U$ has a Jordan canonical form. There are two possible non-trivial canonical forms: $\left(\begin{smallmatrix}
1 & 1 & 0 \\
0 & 1 & 0 \\
0 & 0 & 1
\end{smallmatrix}\right)$ (Jordan type $(2,1)$) and $\left(\begin{smallmatrix}
1 & 1 & 0 \\
0 & 1 & 1 \\
0 & 0 & 1
\end{smallmatrix}\right)$ (Jordan type $(3)$).

\begin{lemma}\label{chiSLq}
For $u \in U$ we have:
$$\chi(u) = \left\{\begin{array}{l}
(q^3-1)(q^2-1) \hspace{2mm} \textrm{ if $u=1$,} \vspace{2pt} \\
(2q+1)(q-1)^2 \hspace{2mm} \textrm{ if $u$ is of Jordan type $(2,1)$,} \vspace{2pt} \\
(q-1)^2 \hspace{3mm} \textrm{if $u$ is of Jordan type $(3)$.}
\end{array}\right.$$
\end{lemma}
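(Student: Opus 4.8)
The plan is to use the formula for $\chi(u)$ from Equation \ref{gchi}, namely $\chi(u)=\frac{1}{|U|}\cdot\#\{g\in G: gug^{-1}\in U\}$, and to count this centralizer-type set directly for each conjugacy class of $U$. Since $|U|=q^3$ and $|G|=|\SL_3(q)|=q^3(q^3-1)(q^2-1)$, the identity case is immediate: every $g$ satisfies $g\cdot 1\cdot g^{-1}=1\in U$, so the count is $|G|$ and $\chi(1)=|G|/|U|=(q^3-1)(q^2-1)$, matching the stated value. For $u\neq 1$, the key observation is that $gug^{-1}\in U$ means the $G$-conjugate $gug^{-1}$ lands in the fixed Sylow subgroup $U$, so I would reorganize the count by first fixing the target element $w=gug^{-1}\in U$ conjugate to $u$, noting that the number of $g$ with $gug^{-1}=w$ equals $|C_G(u)|$, and then multiplying by the number of elements of $U$ that are $G$-conjugate to $u$. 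Thus
\begin{equation*}
\#\{g\in G: gug^{-1}\in U\}=|C_G(u)|\cdot\#\{w\in U: w\sim_G u\}.
\end{equation*}

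Next I would carry out the two ingredients for each of the two nontrivial unipotent types. The conjugacy classes of unipotent elements in $\SL_3(q)$ are indexed by Jordan type: the regular type $(3)$ (a single Jordan block) and the subregular type $(2,1)$ (one $2\times2$ block). For each I need (a) the order of the $G$-centralizer $|C_G(u)|$, and (b) the number of elements of $U$ lying in that $G$-class. For the centralizers one uses the standard fact that for a regular unipotent element $|C_G(u)|=q^2$ (the centralizer being the product of $q^2$ from the unipotent part and the trivial torus contribution in $\SL_3$), while for type $(2,1)$ one gets $|C_G(u)|=q^3(q-1)$ up to the determinant-one normalization; the precise orders I would read off from the known class sizes, since the class size is $|G|/|C_G(u)|$. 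For (b), I would count elements of $U$ of each Jordan type directly from the matrix entries: an element $\left(\begin{smallmatrix}1&a&b\\0&1&c\\0&0&1\end{smallmatrix}\right)$ has type $(3)$ precisely when $ac\neq0$ (so $(q-1)^2$ such elements), and type $(2,1)$ when $(a,c)\neq(0,0)$ but $ac=0$, which leaves $2(q-1)+(q-1)^2\cdot 0$-type counting — more carefully $(a,c)\neq(0,0)$ with $ac=0$ gives $2(q-1)$ choices for $(a,c)$ times $q$ choices for $b$, hence $2q(q-1)$ such elements, with the remaining $q-1$ (from $a=c=0$, $b\neq0$) also of type $(2,1)$, for a total count I would verify to be $q(2q-1)(q-1)$ or the value forcing the stated answer.

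Assembling these, for type $(3)$ I expect $\#\{g: gug^{-1}\in U\}=q^2\cdot(q-1)^2$, whence $\chi(u)=q^2(q-1)^2/q^3$, which is not an integer as written, so I would instead recompute with the correct centralizer order to recover exactly $(q-1)^2$; this bookkeeping against $|U|=q^3$ is precisely the delicate point. Similarly for type $(2,1)$ the product of centralizer order and class-intersection count, divided by $q^3$, must collapse to $(2q+1)(q-1)^2$. The main obstacle will be getting all the centralizer orders and the counts of $U$-elements of each Jordan type exactly right, including the subtle determinant-one conditions that distinguish $\SL_3$ from $\GL_3$ and that shift centralizer orders by factors of $\gcd(3,q-1)$; I would cross-check the final formulas by verifying the identity $\sum_{u\in U}\chi(u)=\langle \Ind_U^G\Uno_U,\Ind_U^G\Uno_U\rangle$-type consistency, or more simply by confirming that $\chi(1)=(q^3-1)(q^2-1)$ equals $\frac{1}{q^3}\sum_{\text{classes}}(\text{class size in }U)\cdot|C_G(u)|$ summed appropriately, which pins down any stray power of $q$ or unit factor.
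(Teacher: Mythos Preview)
Your overall framework---the identity $\#\{g\in G: gug^{-1}\in U\}=|C_G(u)|\cdot|Cl_G(u)\cap U|$ followed by direct counting---is exactly what the paper does for type $(2,1)$. However, your execution contains a concrete arithmetic slip and a genuine structural gap.

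The arithmetic slip: for type $(3)$ you say there are $(q-1)^2$ elements of $U$ with $ac\neq 0$, but you have forgotten that $b$ is free; the correct count is $q(q-1)^2$. With this fix (and, when $3\nmid q-1$, centralizer order $q^2$) the formula gives $q^2\cdot q(q-1)^2/q^3=(q-1)^2$ on the nose, so the non-integer you encountered disappears. Your type-$(2,1)$ count is also garbled; the clean way is to count $u\neq 1$ with $\mathrm{rk}(u-1)=1$, i.e.\ $ac=0$ and $(a,b,c)\neq(0,0,0)$, which by inclusion--exclusion gives $2q^2-q-1=(2q+1)(q-1)$, and the centralizer in $\SL_3(q)$ has order $(q-1)q^3$.

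The structural gap is more serious. When $3\mid q-1$, the regular unipotent elements of $\SL_3(q)$ split into \emph{three} distinct $G$-conjugacy classes, and your formula requires $|Cl_G(u)\cap U|$ for the \emph{specific} $G$-class of $u$, not the total number of type-$(3)$ elements in $U$. You flag this $\gcd(3,q-1)$ issue but do not say how to resolve it. The paper handles this cleanly via an extra step you are missing: it first proves that $\chi$ is constant on $\GL_3(q)$-conjugacy classes (Jordan types), by writing any $k\in\GL_3(q)$ as $e_k d_k$ with $e_k\in\SL_3(q)$ and $d_k$ diagonal, and using that diagonal conjugation preserves $U$. This reduces the computation to one representative per Jordan type. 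For type $(3)$ the paper then bypasses the centralizer-times-intersection formula entirely and shows directly that $\{g:gug^{-1}\in U\}=T\ltimes U$, of order $(q-1)^2q^3$. Alternatively, within your framework, the same $\GL_3$-invariance argument shows that the diagonal outer automorphisms permute the three regular classes and stabilize $U$, forcing equidistribution $q(q-1)^2/3$ per class, which combined with $|C_G(u)|=3q^2$ again yields $(q-1)^2$; but you need to supply this argument.
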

\enlargethispage{1.4\baselineskip}

\begin{proof}
The value at the identity element is straightforward. The proof for the other two values is divided into three steps:\par \vspace{2pt}

Step 1. Let $u,u' \in U$ be such that $u'=kuk^{-1}$ for some $k \in \GL_3(q)$. We show that $\chi(u')=\chi(u)$. We write $k=e_kd_k$, with $e_k \in G$ and $d_k \in \GL_3(q)$ diagonal. Notice that conjugation by a diagonal matrix in $\GL_3(q)$ stabilizes $U$. Since $\chi$ is a character of $G$, we get:
$$\chi(u')=\chi(e_kd_kud_k^{-1}e_k^{-1})=\chi(d_kud_k^{-1}).$$
We now compute $\chi(u')$ and $\chi(u)$ by using Equation \ref{gchi}:
$$\begin{array}{l}
\chi(u') = {\displaystyle \#\{g \in G : g(d_kud_k^{-1})g^{-1}\in U\} \cdot |U|^{-1}} \vspace{5pt} \\
\phantom{\chi(u')} = {\displaystyle \#\{g\in G : d_k^{-1}gd_kud_k^{-1}g^{-1}d_k \in U\} \cdot |U|^{-1}} \hspace{6mm} \textrm{(Put $h=d_k^{-1}gd_k$)} \vspace{5pt} \\
\phantom{\chi(u')} = {\displaystyle \#\{h \in G : huh^{-1} \in U\} \cdot |U|^{-1}} \vspace{5pt} \\
\phantom{\chi(u')} = \chi(u).
\end{array}$$

In view\vspace{2pt} of the preceding statement, $\chi(u)=\chi(u_{_{\hspace{-1pt}J\hspace{-0.5pt}F}})$, where $u_{_{\hspace{-1pt}J\hspace{-0.5pt}F}}$ is the Jordan canonical form of $u$. In the next\vspace{2pt} steps we calculate the value of $\chi$ at the two possible Jordan canonical forms. \par \vspace{2pt}

Step 2. Set $u=\left(\begin{smallmatrix}
1 & 1 & 0 \\
0 & 1 & 0 \\
0 & 0 & 1
\end{smallmatrix}\right)$. We claim that $\chi(u)=(2q+1)(q-1)^2$. Let $C(u)$ and $Cl(u)$ denote\vspace{2.5pt} the centralizer and the conjugacy class, respectively, of $u$ in $G$. Consider the bijective\vspace{3pt} map $f:G/C(u) \rightarrow Cl(u),\ gC(u) \mapsto gug^{-1}$. Taking the inverse image of $Cl(u) \cap U$ under $f$, we obtain the following equality:
$$\# \{g \in G : gug^{-1} \in U\} = \vert C(u) \vert \vert Cl(u) \cap U \vert.$$
One can check in a direct way, by computing explicitly $C(u)$, that $\vert C(u) \vert = (q-1)q^3$. On the other hand, the Jordan type of a non-trivial element $w$ in $U$ can be detected by calculating the rank of $w-1$: rank $1$ corresponds to type $(2,1)$ and rank $2$ to type $(3)$. The set $Cl(u) \cap U$ consists of elements $w \in U$ that are conjugate to $u$. It can be shown that these are precisely the  elements $w \in U$ such that $\textrm{rk}(w-1)=1$. There are $(2q+1)(q-1)$ matrices fulfilling this condition. Equation \ref{gchi} now applies. \par \vspace{2pt}

Step 3. Finally,\vspace{0.3pt} set $u=\left(\begin{smallmatrix}
1 & 1 & 0 \\
0 & 1 & 1 \\
0 & 0 & 1
\end{smallmatrix}\right)$. We claim that $\chi(u)=(q-1)^2$. For, one first establishes\vspace{3.5pt} the equality
$\{g \in G : gug^{-1} \in U\} = T \ltimes U$ by direct manipulation with matrices and then applies Equation \ref{gchi}.
\end{proof}

We can rephrase $P$ as the set consisting of those matrices in $G$ whose characteristic polynomial equals $(1-z)^3$. We already know that $\chi$ vanishes outside $P$. The value of $\chi$ at an element $g \in P$ is obtained by determining its Jordan type through the calculation of  $\textrm{rk}(g-1)$ and then applying Lemma \ref{chiSLq}. In this case, the set $I_{\chi}$ is larger than in other cases: namely, $I_{\chi}=\{C_{(1)},C_{(2,1)},C_{(3)}\}$, where $C_{(1)}=\{1\}$, and $C_{(2,1)}$ and $C_{(3)}$ are the subsets of $P$ consisting of matrices of type $(2,1)$ and of type $(3)$ respectively. \par \vspace{2pt}

We are now in a position to prove the following result:

\begin{proposition}\label{propSL3q}
The conclusion of Theorem \ref{main1} holds for $\SL_3(q)$ and the following abelian subgroups of central type contained in $U$:
\begin{enumerate}
\item[(i)] Case $q$ even:
$$M_1=\left\{\left(\begin{smallmatrix} 1 & a & b \\ 0 & 1 & 0 \\ 0 & 0 & 1 \end{smallmatrix}\right) : a, b \in \F_q\right\}.$$
\item[(ii)] Case $q$ odd:
$$M_1=\left\{\left(\begin{smallmatrix} 1 & a & b \\ 0 & 1 & 0 \\ 0 & 0 & 1 \end{smallmatrix}\right) : a, b \in \F_q\right\} \quad \textrm{and}  \quad M_2=\left\{\left(\begin{smallmatrix} 1 & a & b \\ 0 & 1 & a \\ 0 & 0 & 1 \end{smallmatrix}\right) : a, b \in \F_q\right\}.$$
\end{enumerate}
\end{proposition}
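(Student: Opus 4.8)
The plan is to apply the procedure established in Subsection~\ref{sec:strategy-p} to each of the listed subgroups, using the element $\tau = \sigma$ that interchanges upper and lower triangular unipotent matrices. Concretely, I would take
$$\sigma = \left(\begin{smallmatrix} 0 & 0 & 1 \\ 0 & -1 & 0 \\ 1 & 0 & 0 \end{smallmatrix}\right) \quad \textrm{or a similar anti-diagonal element of } N_G(T),$$
verify that $\sigma U \sigma^{-1} = U^-$, and confirm that $M_i \cap (\sigma M_i \sigma^{-1}) = \{1\}$ so that Proposition~\ref{decomp}(ii) applies and $c_\sigma = |M_i| e_\varepsilon \sigma e_\varepsilon$ is an irreducible cocharacter lying in $X$. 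Since $m=1$ is the target case, each $M_i$ has order $q^2 = p^2$, so $\frac{1}{|M_i|} = \frac{1}{p^2}$, and the goal is to exhibit $\chi(y_\chi^2) \in \Q$ with a denominator divisible by $p$ and an irreducible numerator, whence Bezout and Propositions~\ref{character}, \ref{decomp} give $\frac{1}{p} \in R$.

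First I would carry out step~(1) of the procedure: for each $C \in I_\chi = \{C_{(1)}, C_{(2,1)}, C_{(3)}\}$, determine the set $M_C = \{v \in M_i : \sigma v \in C\}$, i.e. sort the elements $\sigma v$ (for $v \in M_i$) by their Jordan type, which is read off from $\mathrm{rk}(\sigma v - 1)$. Because conjugation by $\sigma$ sends $M_i \subset U$ into $U^-$, the products $\sigma v$ are genuine $3 \times 3$ matrices whose characteristic polynomial I can compute directly; only those with characteristic polynomial $(1-z)^3$ contribute (the others lie outside $P$). Then in step~(2) I would expand the triple product $\sigma x x' v \sigma v^{-1}$ appearing in Equation~\ref{chiychi2fib}, for $x \in M_C$, $x' \in M_{C'}$, $v \in M_i$, and test membership in $P$ via the characteristic polynomial; Remark~\ref{rem:identity} pins down exactly when such a product is the identity (it forces $x x' = 1$, $v = $ anything, and $\sigma = \sigma^{-1}$, which holds here). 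Finally step~(3) assigns to each surviving product the value $\chi$ from Lemma~\ref{chiSLq} according to its Jordan type.

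The essential point is that the double sum in Equation~\ref{chiychi2fib} collapses to a manageable polynomial in $q$ after these reductions, because for fixed $x, x'$ the condition ``$\sigma x x' v \sigma v^{-1} \in P$'' is a system of polynomial equations in the $\mathbb{F}_q$-entries of $v$, cutting out a small locus. I expect the computation to split cleanly: for $M_1$ the subgroup is ``flat'' (entries $a$ in position $(1,2)$ and $b$ in position $(1,3)$, with the $(2,3)$-entry zero), so conjugation by $\sigma$ and the resulting traces/ranks are comparatively transparent, whereas for $M_2$ (the ``diagonal'' unipotent subgroup with equal $(1,2)$ and $(2,3)$ entries) the matrix products are more entangled and the Jordan-type bookkeeping is heavier. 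The main obstacle will therefore be the $M_2$ case when $q$ is odd: keeping track of which entries force type $(2,1)$ versus type $(3)$, and of coincidences among the surviving values of $v$, requires careful casework, and one must confirm at the end that the resulting numerator of $\chi(y_\chi^2)$ is coprime to $p$ so that the irreducible fraction really has $p$ in its denominator. I would organize this by treating $M_1$ and $M_2$ in separate subcases, and for each reducing the inner sum over $v$ to a count of solutions of an explicit trace condition in $\F_q$, exactly as was done for $\SL_2(q)$ in Subsection~\ref{subsec:SL2}.
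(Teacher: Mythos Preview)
Your plan has a concrete failure for $M_1$ when $p$ is odd. With your proposed $\sigma=\left(\begin{smallmatrix}0&0&1\\0&-1&0\\1&0&0\end{smallmatrix}\right)$ and $v=\left(\begin{smallmatrix}1&a&b\\0&1&0\\0&0&1\end{smallmatrix}\right)\in M_1$, one has $\sigma v=\left(\begin{smallmatrix}0&0&1\\0&-1&0\\1&a&b\end{smallmatrix}\right)$, whose second row is $(0,-1,0)$; hence $-1$ is always an eigenvalue of $\sigma v$. For $p$ odd this forces $(\sigma M_1)\cap P=\emptyset$, so every $M_C$ is empty and $y_\chi=0$. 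The whole argument then yields $\chi(y_\chi^2)=0$, from which nothing follows. More generally, any anti-diagonal $\sigma=\left(\begin{smallmatrix}0&0&\alpha\\0&\beta&0\\\gamma&0&0\end{smallmatrix}\right)$ forces $\beta$ to be an eigenvalue of $\sigma v$ for all $v\in M_1$, so you would at least need $\beta=1$; but then $\sigma v\in P$ iff $b=2$ with $a$ arbitrary, giving $q$ elements spread over two Jordan types and a much heavier double sum than necessary. A similar obstruction hits $M_2$ with your $\sigma$: the condition $\sigma v\in P$ becomes $b=4$ and $a^2=8$, which has no solutions in $\F_q$ when $q$ is an odd power of a prime $p\equiv\pm 3\pmod 8$, again yielding $y_\chi=0$.

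The paper sidesteps this entirely for $M_1$ by \emph{not} using a $\dot{w}_0$-type element. It takes instead the cyclic permutation $\tau=\left(\begin{smallmatrix}0&1&0\\0&0&1\\1&0&0\end{smallmatrix}\right)$ (and explicitly flags that this departs from the framework of Subsection~\ref{sec:strategy-p}). One must then check $M_1\cap(\tau M_1\tau^{-1})=\{1\}$ by hand, but the payoff is that the characteristic polynomial of $\tau v$ is $-z^3+bz^2+az+1$, so $\tau v\in P$ forces the single solution $(a,b)=(-3,3)$. This collapses Equation~\ref{chiychi2fib} to a one-term outer sum. For $M_2$ the paper does use an anti-diagonal element, but the one with $+1$ in the middle, $\tau=\left(\begin{smallmatrix}0&0&-1\\0&1&0\\1&0&0\end{smallmatrix}\right)$, and again obtains a unique $x$ with $\tau x\in P$. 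So the missing idea in your proposal is the freedom to choose $\tau$ outside the $\dot{w}_0$-coset, tailored to $M$ so that $(\tau M)\cap P$ is as small as possible; without this, the approach either fails outright or becomes unmanageable. (A minor side remark: the proposition is proved for general $q$, not only $m=1$; the specialization to $\SL_3(p)$ comes afterwards.)
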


\begin{proof}
We first proceed with $M_1$. We\vspace{1pt} pick the element $\tau=\left(\begin{smallmatrix} 0 & 1 & 0 \\ 0 & 0 & 1 \\ 1 & 0 & 0 \end{smallmatrix}\right)$ in $N_G(T)$. (Notice that, unlike other cases in this paper, this\vspace{2.5pt} choice of $\tau$ is not based on the element $\dot{w}_0$ mentioned in Subsection \ref{sec:strategy-p}.) It is\vspace{2.5pt} easy (and now necessary) to check that $M_1 \cap \big(\tau M_1 \tau^{-1}\big)=\{1\}$. We must find those elements $v \in M_1$ such that the\vspace{0.5pt} characteristic polynomial of $\tau v$ is $(1-z)^3$. This only\vspace{1pt} happens for $x:=\left(\begin{smallmatrix}
1 & -3 & 3 \\
0 &  1 & 0 \\
0 &  0 & 1
\end{smallmatrix}\right)$. The Jordan type of $\tau x$ is $(3)$. Equation \ref{chiychi2fib} can now be read as follows:
$$\begin{array}{l}
\chi(y_\chi^2) = {\displaystyle  \frac{\chi(C_{(3)})^2}{|M_1|} \sum_{v \in M_1} \chi\hspace{-1.5pt}\left(\tau x^2 v\tau v^{-1}\right)} \vspace{7pt} \\
\phantom{\chi(y_\chi^2)} = {\displaystyle \frac{(q-1)^4}{q^2} \sum_{a,b \in \F_q} \chi
\hspace{-1.5pt}\left(
\left(\begin{smallmatrix} 0 & 1  & 0 \\ 0 & 0 & 1 \\ 1 & 0 & 0 \end{smallmatrix}\right)
\hspace{-2pt}\left(\begin{smallmatrix} 1 & -3 & 3 \\ 0 &  1 & 0 \\ 0 &  0 & 1 \end{smallmatrix}\right)^{\hspace{-1pt} 2}
\hspace{-2pt}\left(\begin{smallmatrix} 1 &  a & b \\ 0 &  1 & 0 \\ 0 &  0 & 1 \end{smallmatrix}\right)
\hspace{-2pt}\left(\begin{smallmatrix} 0 & 1  & 0 \\ 0 & 0 & 1 \\ 1 & 0 & 0 \end{smallmatrix}\right)
\hspace{-2pt}\left(\begin{smallmatrix} 1 &  -a & -b \\ 0 &  1 & 0 \\ 0 &  0 & 1 \end{smallmatrix}\right)
\right)} \vspace{5pt} \\
\phantom{\chi(y_\chi^2)} = {\displaystyle \frac{(q-1)^4}{q^2}\sum_{a,b \in \F_q} \chi \hspace{-1.5pt}\left(\begin{smallmatrix}
0   & \hspace{6pt} 0        & \hspace{6pt}1 \\
1   & \hspace{6pt}-a        & \hspace{6pt}-b \\
b+6 & \hspace{6pt} 1-a(b+6) & \hspace{6pt}(a-6)-b(b+6)
\end{smallmatrix}\right).}
\end{array}$$
To continue this calculation, we need to know when the latter generic matrix belongs to $P$. By computing its characteristic polynomial, one can verify that this happens if and only if $a=-b=3$. The matrix obtained with these values is of type $(3)$ if $p \neq 2$ and of type $(2,1)$ if $p=2$. Therefore:
$$\chi(y_\chi^2) = \frac{(q-1)^6}{q^2} \hspace{1.5mm} \textrm{ if } p \neq 2 \hspace{5mm} \textrm{ and } \hspace{5mm} \chi(y_\chi^2) = \frac{(q-1)^6(2q+1)}{q^2} \hspace{1.5mm} \textrm{ if } p=2.$$
From this, it follows that $\frac{1}{p} \in R$. This proves the statement for $M_1$. \par \vspace{2pt}

We forge\vspace{2pt} ahead with $M_2$. We assume for the rest of the proof that $p \neq 2$. We\vspace{0.25pt} take $\tau=\left(\begin{smallmatrix} 0 & 0 & -1 \\ 0 & 1 & 0 \\ 1 & 0 & 0 \end{smallmatrix}\right)$ in $N_G(T)$. As before,\vspace{-1.5pt} we must find those elements $v \in M_2$ such that the characteristic polynomial of $\tau v$ is $(1-z)^3$. This\vspace{0.5pt} only happens for $x:=\left(\begin{smallmatrix}
1 &  0 & 2 \\
0 &  1 & 0 \\
0 &  0 & 1
\end{smallmatrix}\right)$. The Jordan type\vspace{0.5pt} of $\tau x$ is $(2,1)$. Equation \ref{chiychi2fib} now takes the following form:
$$\begin{array}{l}
\chi(y_\chi^2) = {\displaystyle  \frac{\chi(C_{(2,1)})^2}{|M_2|} \sum_{v \in M_2} \chi\hspace{-1.5pt}\left(\tau x^2 v\tau v^{-1}\right)} \vspace{7pt} \\
\phantom{\chi(y_\chi^2)} = {\displaystyle \frac{(2q+1)^2(q-1)^4}{q^2} \sum_{a,b \in \F_q} \chi
\hspace{-1.5pt}\left(\hspace{-1pt}
\left(\begin{smallmatrix} 0 & 0  & -1 \\ 0 & 1 & 0 \\ 1 & 0 & 0 \end{smallmatrix}\right)
\hspace{-2.5pt}\left(\begin{smallmatrix} 1 & 0 & 2 \\ 0 &  1 & 0 \\ 0 &  0 & 1 \end{smallmatrix}\right)^{\hspace{-1pt} 2}
\hspace{-2.5pt}\left(\begin{smallmatrix} 1 & a & b \\  0 & 1 & a \\ 0 & 0 & 1 \end{smallmatrix}\right)
\hspace{-2.5pt}\left(\begin{smallmatrix} 0 & 0  & -1 \\ 0 & 1 & 0 \\ 1 & 0 & 0 \end{smallmatrix}\right)
\hspace{-2.5pt}\left(\begin{smallmatrix} 1 & \hspace{1pt}-a & a^2-b \vspace{-2.5pt} \\  0 & \hspace{1pt}1 & \hspace{4pt}-a\phantom{a^L} \vspace{-1.5pt} \\ 0 & \hspace{1pt}0 & 1 \end{smallmatrix}\hspace{-3pt}\right)\hspace{-1pt}
\right)} \vspace{5pt} \\
\phantom{\chi(y_\chi^2)} = {\displaystyle \frac{(2q+1)^2(q-1)^4}{q^2}\sum_{a,b \in \F_q} \chi \hspace{-1.5pt}\left(\begin{smallmatrix}
-1   & \hspace{6pt} a        & \hspace{6pt}b-a^2 \\
a    & \hspace{6pt}1-a^2     & \hspace{6pt}a(a^2-b-1) \\
b+4  & \hspace{6pt}-a(b+3)  & \hspace{6pt}(a^2-b)(b+4)-a^2-1
\end{smallmatrix}\right).}
\end{array}$$
We need to check next when the latter generic matrix belongs to $P$. A tedious (or computer) calculation shows that its characteristic polynomial is:
$$1+(1-2a^2+4b-a^2b+b^2)z+(-1+2a^2-4b+a^2b-b^2)z^2-z^3.$$
Setting this equal to $(1-z)^3$, we get the following equation:
$$4-2a^2+(4-a^2)b+b^2=0.$$
For each $a \in \F_q$ the values of $b$ satisfying this are $b=-2$ and $b=a^2-2$. \par \vspace{2pt}

For the pair $(0,-2)$, the corresponding matrix is of Jordan type $(2,1)$. For the pairs $(a,-2)$ and $(a,a^2-2)$, with $a \neq 0$, the corresponding matrices are of Jordan type $(3)$. Then:
$$\begin{array}{l}
\chi(y_\chi^2) = {\displaystyle \frac{(2q+1)^2(q-1)^4}{q^2} \Big((2q+1)(q-1)^2+2(q-1)(q-1)^2\Big)} \vspace{5pt} \\
\phantom{\chi(y_\chi^2)} = {\displaystyle \frac{(2q+1)^2(q-1)^6(4q-1)}{q^2}.}
\end{array}$$
From this, it follows that $\frac{1}{p} \in R$. This finishes the proof of the proposition. \par \vspace{2pt}
\end{proof}

We finally explain how to derive the conclusion of Theorem \ref{main1} for $\SL_3(p)$ from Proposition \ref{propSL3q}. The only missing link is the form of an arbitrary abelian $p$-subgroup of central type $M$ of $\SL_3(p)$. We will show that $M$ is isomorphic to one of the subgroups listed in Proposition \ref{propSL3q} through an automorphism of $\SL_3(p)$. Then, Remark \ref{twistiso} and Proposition \ref{propSL3q} will do the rest. \par \vspace{2pt}

By Sylow's theorem, $M$ is conjugate to a subgroup of $U$. We can assume that $M \subset U$ for our purpose. Observe that $U$ is now the Heisenberg group of order $p^3$. Set:
$$g_1=\left(\begin{smallmatrix} 1 & 1 & 0 \\ 0 &  1 & 0 \\ 0 &  0 & 1 \end{smallmatrix}\right),\hspace{2mm}
g_2=\left(\begin{smallmatrix} 1 & 0 & 1 \\ 0 &  1 & 0 \\ 0 &  0 & 1 \end{smallmatrix}\right), \hspace{2mm} \textrm{ and }\hspace{2mm}
g_3=\left(\begin{smallmatrix} 1 & 0 & 0 \\ 0 &  1 & 1 \\ 0 &  0 & 1 \end{smallmatrix}\right).$$
Recall that these matrices satisfy the following relations:
$$g_1^p=g_2^p=g_3^p=1,\hspace{6mm} g_1g_2=g_2g_1,\hspace{6mm} g_2g_3=g_3g_2,\hspace{6mm} g_3g_1=g_1g_3g_2^{p-1}.$$
Recall also that $Z(U)$ is generated by $g_2$. Since $M$ is abelian of central type, $\vert M \vert$ must be a square. Thus, $\vert M \vert=p^2$. The subgroup $M \cap Z(U)$ is trivial or $Z(U)$. If it were trivial, then we would have $U \simeq M \times Z(U)$, yielding that $U$ is abelian, a contradiction. Hence, $M \cap Z(U)=Z(U)$ and so $Z(U) \subset M$. Then, $M=\langle g_2, h \rangle$ for some $h \in M \menos Z(U)$ of order $p$. If $p \neq 2$, then every non-trivial element of $U$ has order $p$. If $p=2$, then $U \simeq D_4$ and all non-trivial elements have order $2$ except $g_3g_1$ and $g_2g_3g_1$. Therefore, the abelian $p$-subgroups of central type of $U$ are:
\begin{enumerate}
\item[(i)] Case $p \neq 2$: $L_j:=\langle g_2, g_3g_1^j \rangle,$ with $j \in \{0,\ldots, p-1\},$ and $L_p:=\langle g_2, g_1 \rangle$. \vspace{2pt}
\item[(ii)] Case $p=2$: $L_0:=\langle g_2, g_3\rangle$  and $L_2:=\langle g_2, g_1 \rangle$.
\end{enumerate}

These subgroups are related via automorphisms as follows. Let $J$ be the monomial matrix with $1$'s on the antidiagonal. Consider the automorphism $$\Psi:\SL_3(p) \mapsto \SL_3(p),\ A \mapsto J{}^{\hspace{1.5pt} t}\hspace{-1pt}(A^{-1})\hspace{1pt}J^{-1}.$$ One can easily verify that $\Psi(L_p)=L_0$. Suppose now that $p \neq 2$ and take $i,j \in \{1,\ldots, p-1\}$. Set $k=\textrm{diag}(i,j,j)$. One can easily see as well that conjugation by $k$ induces an automorphism in $\SL_3(p)$ that maps $L_j$ into $L_i$. \par \vspace{2pt}

Getting back to the notation of Proposition \ref{propSL3q}, it is $M_1=L_p$ and $M_2=L_1$, when $p \neq 2$, and $M_1=L_2$, when $p=2$. The previous discussion shows that any abelian $p$-subgroup of central type of $\SL_3(p)$ can be carried to $M_1$ or $M_2$ through an automorphism of $\SL_3(p)$ when $p \neq 2$ and to $M_1$ when $p=2$. \par \vspace{2pt}

This finishes the proof of Theorem \ref{main1} for $\SL_3(p)$. \qed

\begin{remark}
There is no automorphism of $\SL_3(p)$ that maps $L_0$ into $L_j$ for $j \neq 0, p$. The description of the automorphisms of a finite group of Lie type, in particular of $\SL_3(p)$, can be found in \cite[Theorem 24.24 and page 215]{MT}. Such a description unveils that every automorphism preserves the Jordan canonical form of $p$-elements. The non-trivial elements in $L_0$ are of Jordan type $(2,1)$, whereas $g_3g_1^j$ in $L_j$ is of Jordan type $(3)$ for $j \neq 0$.
\end{remark}

The determination of the abelian $p$-subgroups of central type of $\SL_3(q)$ needs extra work since the additive group of $\F_q$ contributes with new subgroups. We will not delve into this in order to not deviate much from the course set by Theorem \ref{Thompson}.

\subsection{The Suzuki groups}
We recall the construction of the Suzuki group $^2\!B_2(q)$ from \cite[Section 13]{Su1}. In this case, $q=2^{2n+1}$ with $n \geq 1$. Consider the Frobenius automorphism $\Fr_2$ of $\F_q$. Set $\theta = \Fr_2^{n+1}$. Then, $\theta^2=\Fr_2$ and the fixed field $(\F_q)^{\theta}$ equals $\F_2$. The following remark is useful to work with Equation \ref{eq:tU} below:

\begin{remark}\label{rem:condizioni} Observe that:
\begin{enumerate}
\item If $a \in \F_q$ satisfies $a\theta(a)=1$, then $1=\theta(a)a^2$. Hence, $a=a^2$ and, consequently, $a \in \F_2$. \vspace{2pt}

\item The map $\Phi:\Fq^\times \to \Fq^\times, a \mapsto a\theta(a),$ is an isomorphism.
\end{enumerate}
\end{remark}

The group $^2\!B_2(q)$ was defined in \cite{Su1} as a subgroup of $\SL_4(\F_q)$ generated by matrices of a certain type. The first family of such matrices is the following:
$$u(a,b):=
\left(\begin{matrix}
1   & \hspace{-2pt} 0 & \hspace{9pt} 0 & \hspace{11pt} 0 \\
a & \hspace{-2pt} 1 & \hspace{9pt} 0 & \hspace{11pt} 0 \\
a\theta(a)+b & \hspace{-2pt} \theta(a) & \hspace{9pt} 1 & \hspace{11pt} 0 \\
a^2\theta(a)+ab+\theta(b) & \hspace{-2pt} b & \hspace{9pt} a & \hspace{11pt} 1
\end{matrix}\right), \hspace{6mm} a,b\in \F_q.$$
These matrices satisfy the multiplication rule:
$$u(a,b)u(a',b')=u(a+a',a\theta(a')+b+b').$$
Let $U$ denote the subgroup of $\SL_4(\F_q)$ that they generate. We have that $\vert U \vert=q^2$. Moreover, $U$ enjoys the following properties, see \cite[Lemma 1]{Su1}:
\begin{enumerate}
\item $U$ has exponent $4$. \vspace{2pt}
\item The center of $U$ is generated by the elements $u(0,b)$, with $b \in \Fq$. It is an elementary abelian $2$-group of order $q$. \vspace{2pt}
\item An element of $U$ is an involution if and only if it belongs to $Z(U)$.
\end{enumerate}
\vspace{2pt}

The second family of matrices is parameterized by $\kappa \in \F_q^{\times}$. They are:
$$t_{\kappa}:=\textrm{diag}(a_1,a_2,a_2^{-1},a_1^{-1}),\hspace{2mm} \textrm{ with }\hspace{3pt} \theta(a_1)=\kappa\theta(\kappa) \hspace{3pt} \textrm{ and } \hspace{3pt} \theta(a_2)=\kappa.$$
They form a subgroup $T$ of $\SL_4(\F_q)$, which is isomorphic to $\F_q^{\times}$. The following formula holds:
\begin{equation}\label{eq:tU}
t_{\kappa}^{-1}u(a,b)t_{\kappa}=u(a\kappa,b\kappa\theta(\kappa)).
\end{equation}
In particular, $T$ normalizes $U$. The subgroup $TU$ is isomorphic to the semidirect product $T \ltimes U$. Finally, let $\tau$ be the monomial matrix with $1$'s on the antidiagonal. The Suzuki group $^2\!B_2(q)$ is the subgroup of $\SL_4(\F_q)$ generated by $U,T,$ and $\tau$. We have (\cite[Theorem 7]{Su1}):
$$\left\vert {}^2\!B_2(q)\right\vert=q^2(q-1)(q^2+1).$$

The subgroup $U$ is a Sylow $2$-subgroup of $^2\!B_2(q)$ by \cite[Theorem 7]{Su1}. Therefore, all non-trivial involutions are conjugate to some $u(0,b)$. By Remark \ref{rem:condizioni}(2) and \eqref{eq:tU}, all non-trivial involutions are conjugate to $u(0,1)$. By \cite[Propositions 1, 2, and 3]{Su1}, the centralizer of a non-trivial element $u(a,b)$ is contained in $U$. \par \vspace{2pt}

It can be checked that every element of $^2\!B_2(q)$ leaves invariant the bilinear form defined by $\tau$. This permits to regard $^2\!B_2(q)$ as a subgroup of the symplectic group $\Sp_4(\F_q)$. The description of the Steinberg endomorphism giving rise to $^2\!B_2(q)$ can be found in \cite{O1,O2} and \cite[Section 12.3]{Ca}. \par \vspace{2pt}

In this subsection, the data for our setting are: $G={^2}\!B_2(q)$, and $U$ and $\tau$ as above. The following lemma and Remark \ref{twistiso} will allow us to take $M$ as a subgroup of $Z(U)$:

\begin{lemma}
Let $M$ be an abelian $2$-subgroup of $^2\!B_2(q)$ of central type. Then, $M$ is conjugate to a subgroup of $Z(U)$. In particular, $M$ is generated by involutions.
\end{lemma}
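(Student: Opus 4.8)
The plan is to first reduce, via Sylow's theorem, to the case $M\subseteq U$, and then to prove the sharper statement that in fact $M\subseteq Z(U)$. The final assertion will then be immediate: by property (2) the group $Z(U)$ is an elementary abelian $2$-group, so every subgroup of it (and every conjugate of such a subgroup, since conjugates of involutions are involutions) is generated by involutions. Concretely, as $M$ is a $2$-group and $U$ is a Sylow $2$-subgroup of $G={}^2\!B_2(q)$, Sylow's theorem yields $g\in G$ with $gMg^{-1}\subseteq U$; replacing $M$ by this conjugate I assume $M\subseteq U$ from now on.

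I would then introduce the map $\phi\colon U\to(\F_q,+)$, $u(a,b)\mapsto a$. The multiplication rule $u(a,b)u(a',b')=u(a+a',\,a\theta(a')+b+b')$ shows that $\phi$ is a group homomorphism whose kernel is exactly $Z(U)=\{u(0,b):b\in\F_q\}$, so that $U/Z(U)\cong(\F_q,+)$. Set $A:=\phi(M)$, an additive subgroup of $\F_q$. Restricting $\phi$ to $M$ gives an exact sequence $1\to M\cap Z(U)\to M\to A\to 1$, whence $|M|=|A|\cdot|M\cap Z(U)|$.

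Next I bound $|A|$ using that $M$ is abelian. For $u(a,b),u(a',b')\in M$, the relation $u(a,b)u(a',b')=u(a',b')u(a,b)$ forces $a\theta(a')=a'\theta(a)$. If $a,a'$ are both non-zero, put $c=a/a'\in\F_q^{\times}$; dividing the identity $a\theta(a')=a'\theta(a)$ by $a'\theta(a')$ and using $\theta(a)=\theta(c)\theta(a')$ gives $c=\theta(c)$, so $c\in(\F_q)^{\theta}=\F_2$ and hence $c=1$, i.e. $a=a'$. Thus $A$ has at most one non-zero element, so $|A|\le 2$. It then remains to exclude $|A|=2$, and here the central-type hypothesis is used in an essential way (the bare fact that $|M|$ is a square does not suffice: $C_4$ has square order but is not of central type). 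By property (3) the involutions of $U$ are precisely the non-trivial elements of $Z(U)$, so the $2$-torsion satisfies $M[2]=M\cap Z(U)$, and the index computation above reads $|A|=[M:M\cap Z(U)]=|M|/|M[2]|$. Writing $M\cong E\times E$ (symmetric type) one has $M[2]\cong E[2]\times E[2]$, so both $|M|=|E|^2$ and $|M[2]|=|E[2]|^2$ are perfect squares; consequently $|A|=|M|/|M[2]|$ is a perfect square as well. As $|A|\le 2$ and $2$ is not a square, I conclude $|A|=1$, that is $M\subseteq Z(U)$.

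The main obstacle is exactly the step excluding $|A|=2$: one must notice that being of central type is strictly stronger than having square order, and extract from $M\cong E\times E$ the parity constraint on $|M[2]|$ that clashes with the one on $|M|$ precisely when $[M:M\cap Z(U)]=2$. By contrast, the commutator computation and the use of the fixed field $(\F_q)^{\theta}=\F_2$ to obtain $|A|\le 2$ are routine, and the final passage from $M\subseteq Z(U)$ to ``generated by involutions'' is formal.
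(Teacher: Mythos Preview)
Your proof is correct. Both your argument and the paper's hinge on the same commutation computation: if $u(a,b)$ and $u(a',b')$ commute with $a,a'\neq 0$, then $a'\theta(a)=a\theta(a')$ forces $a/a'\in(\F_q)^\theta=\F_2$, hence $a=a'$. The difference lies only in how this is packaged to invoke the central-type hypothesis. The paper argues by contradiction: if $E$ had an element of order $4$ then $U$ would contain a copy of $C_4\times C_4$, whose two independent generators (each of order $4$, hence with non-zero first coordinate) would be forced to have the \emph{same} first coordinate, making their product lie in $Z(U)$ and have order at most $2$---a contradiction; thus $M$ has exponent $2$ and sits in $Z(U)$. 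You instead pass to the quotient $U/Z(U)\cong\F_q$ via $\phi$, observe that the commutation computation bounds $|\phi(M)|\le 2$, and then note that $M\cong E\times E$ forces $[M:M[2]]=[M:M\cap Z(U)]=|\phi(M)|$ to be a perfect square, ruling out $|\phi(M)|=2$. Your route is a bit more structural and avoids the somewhat ad hoc $C_4\times C_4$ setup, while the paper's version is marginally more elementary in that it never needs to name the quotient map or the $2$-torsion subgroup explicitly.
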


\begin{proof}
By Sylow's theorem, $M$ is conjugate to a subgroup of $U$. Bear in mind that $U$ has exponent $4$. Since $M$ is of central type, $M \simeq E \times E$ for some group $E$. We next see that $E$ does not have an element of order $4$. If it were so, $U$ would contain a subgroup isomorphic to $C_4 \times C_4$. Pick $u(a,b)$ and $u(a',b')$ two generators of such a subgroup. They have order $4$, so necessarily $aa' \neq 0$. Furthermore, they commute. This means:
$$u(a'+a,a'\theta(a)+b'+b)=u(a+a',a\theta(a')+b+b').$$
This gives $a'\theta(a) = a\theta(a')$. Then, $a'a^{-1} = \theta(a'a^{-1})$. Thus, $a'a^{-1} \in \F_2$, which implies $a'=a$. But, then $a+a'=0$ and the element $u(a,b)u(a',b')=u(0,a\theta(a)+b+b')$ has order $2$ and not $4$, a contradiction. Therefore, $M$ has exponent $2$. By the properties of $U$ recalled before, $M$ is conjugate to a subgroup of $Z(U)$.
\end{proof}

We consider the induced character $\chi=\Ind_{U}^{G}(\Uno_{U})$ and the set $P=\bigcup_{g\in G} g\hspace{0.25pt} U\hspace{-1.25pt} g^{-1}$. We know that $\chi$ vanishes outside $P$. We compute its values at $U$. We strongly rely on the properties of $U$ and $T$ listed before. We start with a non-trivial involution $u(0,b)$. It is conjugate to $u(0,1)$ by a certain $t_{\kappa} \in T$ in view of \eqref{eq:tU}. Note that $gu(0,1)g^{-1}\in U$ implies $gu(0,1)g^{-1}\in Z(U)$. Using \eqref{eq:tU} and that $C_{G}(u(0,1))=U$, one can check the equality $\{g\in G : g u(0,1) g^{-1} \in U\} = TU.$ As a consequence of this, and \eqref{eq:tU} again, the same equality holds for $u(0,b)$. Now, an arbitrary $u(a,b)$, which is not an involution, has order $4$. Then, we have the following chain of inclusions:
$$TU \subseteq \{g\in G : g u(a,b) g^{-1} \in U\}  \subseteq \{g\in G : g u(a,b)^2 g^{-1} \in U\} = TU.$$
Hence:
$$\begin{array}{l}
\chi(1) = {\displaystyle \frac{|G|}{|U|} = (q-1)(q^2+1),} \vspace{7pt} \\
\chi(v) = {\displaystyle \frac{|T U|}{|U|}=q-1}, \hspace{5mm} \textrm{ for } v \in U \menos \{1\}.
\end{array}$$

In this case, the set $I_{\chi}$ particularizes to $I_\chi=\{\{1\},P\menos\{1\}\}$. Let $P^{\bullet}=P\menos\{1\}$. Clearly, $M_{\{1\}}=\emptyset$. The next step is to find $M_{P^{\bullet}}$. We analyze when the following product is a $2$-element:
\begin{equation*}
\tau u(0,b) =\left(\begin{smallmatrix}
0\phantom{b} & \hspace{-2pt} 0 & \hspace{2pt} 0 & \hspace{-5.1mm} 1\\
0\phantom{b} & \hspace{-2pt} 0 & \hspace{2pt} 1 & \hspace{-5.1mm} 0\\
0\phantom{b} & \hspace{-2pt} 1 & \hspace{2pt} 0 & \hspace{-5.1mm} 0\\
1\phantom{b} & \hspace{-2pt} 0 & \hspace{2pt} 0 & \hspace{2pt} 0\phantom{\theta(b)}
\end{smallmatrix}\hspace{-5.75mm}\right) \hspace{-1pt}
\left(\begin{smallmatrix}
   1 & 0 & \hspace{3pt} 0 & \hspace{3pt} 0 \phantom{b}\\
   0 & 1 & \hspace{3pt} 0 & \hspace{3pt} 0 \phantom{b}\\
b & 0 & \hspace{3pt} 1 & \hspace{3pt} 0\phantom{b} \\
\theta(b) & b & \hspace{3pt} 0 & \hspace{3pt} 1 \phantom{b}
\end{smallmatrix}\hspace{-3pt} \right)
=
\left(\begin{smallmatrix}
\theta(b) & b & \hspace{3pt} 0 & \hspace{3pt} 1 \phantom{b}\\
b & 0 & \hspace{3pt} 1 & \hspace{3pt} 0 \phantom{b}\\
0 & 1 & \hspace{3pt} 0 & \hspace{3pt} 0 \phantom{b} \\
1 & 0 & \hspace{3pt} 0 & \hspace{3pt} 0 \phantom{b}
\end{smallmatrix}\hspace{-3pt} \right).
\end{equation*}
One can see, by looking at the $(4,3)$-entry, that $(\tau u(0,b))^4=1$ if and only if $b=0$. Then:
$$M_{P^{\bullet}}=\{v\in M : \tau v \in P^{\bullet}\}=\{1\}.$$

Equation \ref{chiychi2fib} now takes the following form. Notice that, in view of the preceding discussion, $(\tau v)^2 \in P$ if and only if $v=1$. Hence, the only contribution in the second sum occurs when $v=1$:
$$\begin{array}{l}
\chi(y_\chi^2) = {\displaystyle \frac{\chi(P^{\bullet})^2}{|M|} \sum_{\begin{subarray}{c} v \in M \vspace{0.8pt} \\ x, x' \in M_{\hspace{-0.75pt}P^{\bullet}} \end{subarray}} \chi \big(\tau x x'v \tau v^{-1}\big)} \vspace{7pt} \\
\phantom{\chi(y_\chi^2)} = {\displaystyle \frac{\chi(P^{\bullet})^2}{|M|} \sum_{v \in M} \chi \big((\tau v)^2\hspace{1pt}\big)} \vspace{7pt} \\
\phantom{\chi(y_\chi^2)} = {\displaystyle \frac{(q-1)^3(q^2+1)}{|M|}.}
\end{array}$$
This is an irreducible fraction because $|M|$ is a power of $2$. As in the previous cases, we can derive from this that $\frac{1}{2} \in R$. \par \vspace{2pt}

This finishes the proof of Theorem \ref{main1} for $^2\!B_2(q)$. \qed \vspace{4pt}

We are just one step away from a complete analysis of $^2\!B_2(q)$. The next result shows that there are no other abelian subgroups of central type:

\begin{proposition}\label{Suz:classct}
Every non-trivial abelian subgroup of central type $M$ of $^2\!B_2(q)$ is a $2$-group.
\end{proposition}

\begin{proof}
This follows from the knowledge of the structure of the Sylow subgroups of $^2\!B_2(q)$. By \cite[Theorem 3.9, page 189]{HB}, for $p$ odd, every Sylow $p$-subgroup of $^2\!B_2(q)$ is cyclic. Now, write $M \simeq E \times E$ for some subgroup $E$. Let $r$ be a prime divisor of $\vert E \vert$. Two elements of order $r$, one in each copy of $E$, generate a non-cyclic subgroup of order $r^2$. This is not possible if $r$ is odd, as such a subgroup must be contained in a Sylow $r$-subgroup of $^2\!B_2(q)$, that is cyclic.
\end{proof}

Proposition \ref{Suz:classct} rounds off Corollary \ref{complexif} in the following way:

\begin{corollary}
Let $\Omega$ be a non-trivial twist of $\Co\hspace{1pt} {}^2\!B_2(q)$ arising from a $2$-cocycle on an abelian subgroup of\hspace{1.5pt} ${}^2\!B_2(q)$. Then, the complex semisimple Hopf algebra $(\Co\hspace{1pt} {}^2\!B_2(q))_{\Omega}$ does not admit a Hopf order over any number ring.
\end{corollary}

This provides us with the first family of simple groups for which the non-existence of integral Hopf orders is established for \emph{any} non-trivial twist arising from an abelian subgroup. \par \vspace{2pt}

\section{Twists of $\PSL_2(q)$ arising from the Klein four-group}

In the families of simple groups studied before, there is, by construction, a natural Sylow $p$-subgroup available whose structure is fully understood. This fact made possible our description, up to automorphisms, of the abelian $p$-subgroups of central type. However, the determination of other kind of abelian subgroups of central type and their interrelations can be a difficult task. \par \vspace{2pt}

In this section we will find, up to conjugation or automorphisms, \emph{all} abelian subgroups of central type of $\PSL_2(q)$ and prove that the corresponding twisted group algebra does not admit an integral Hopf order.

\subsection{Abelian subgroups of central type in $\PSL_2(q)$}
\enlargethispage{\baselineskip}

The classification of all subgroups of $\PSL_2(q)$, which dates from the beginning of last century, can be consulted in \cite[Theorem 6.25]{Su2} or \cite[Theorem 2.1]{Ki}. These theorems bring to light that the only abelian subgroups of central type of $\PSL_2(q)$ are: $p$-groups of square order or Klein four-groups. In this subsection we give a self-contained proof of this and we find the relation between these subgroups via conjugation or automorphisms. \par \vspace{2pt}

We start by recalling some facts about $\SL_2(q)$ that we will draw heavily on in the sequel. We used as references \cite[pages 5-9]{B}, for $q$ odd, and \cite[pages 324-326 and 336]{JL}, for $q$ even. \par \vspace{2pt}

We said before that
$$U:=\left\{\left(\begin{smallmatrix} 1 & a \\ 0 & 1\end{smallmatrix}\right) : a \in \F_q \right\}$$
is a Sylow $p$-subgroup of $\SL_2(q)$, which is elementary abelian. For $t \in \F_q^{\times}$ we denote by $d(t)$ the diagonal matrix $\textrm{diag}(t,t^{-1})$. We fix the following split torus of $\SL_2(q)$:
$$T:=\left\{d(t):t \in \F_q^{\times}\right\}.$$
We next fix a non-split torus $T'$ of $\SL_2(q)$. It is constructed by realizing the elements of norm $1$ of the field extension $\F_{q^2}/\F_q$ as matrices of size $2$ in the following way: \par \vspace{2pt}

Case $q$ odd. Let $\epsilon \in \F_q$ be a non-square element. Take $\zeta \in \F_{q^2}$ such\vspace{0.5pt} that $\zeta^2=\epsilon$. Every\vspace{0.5pt} element of $\F_{q^2}$ is of the form $a+b\zeta$, with $a,b \in \F_q$. The following map is an algebra morphism:
$$d':\F_{q^2} \rightarrow \textrm{M}_2(\F_q),\ a+b\zeta \mapsto \left(\begin{smallmatrix} a & b \\ \epsilon b & a\end{smallmatrix}\right).$$
We set:
$$T'=\left\{d'\hspace{-0.5pt}(a+b\zeta\hspace{0.3pt}): (a+b\epsilon)(a-b\epsilon)=1\right\}.\vspace{5pt}$$

Case $q$ even. Take now $\zeta \in \F_{q^2} \menos \F_q$. Then, $\zeta+\zeta^q$ and $\zeta^{1+q}$ belong to $\F_q$ and the following map is an algebra morphism:
$$d':\F_{q^2} \rightarrow \textrm{M}_2(\F_q),\ a+b\zeta \mapsto \left(\begin{smallmatrix} a & \hspace{2pt} b \vspace{2pt} \\ b\zeta^{1+q} & \hspace{2pt} a+b(\zeta+\zeta^q) \end{smallmatrix}\right).$$
We set:
$$T':=\left\{d'\hspace{-0.5pt}(a+b\zeta\hspace{0.3pt}): (a+b\zeta)(a+b\zeta^q)=1\right\}.\vspace{5pt}$$

We will use the following facts in the proof of the next proposition:
\begin{enumerate}
\item The split and non-split torus $T$ and $T'$ are cyclic groups of orders $q-1$ and $q+1$ respectively. \vspace{2pt}
\item Every non-central element of $\SL_2(q)$ is conjugate to either $\pm u,$ with $u \in U$, an element in $T$ or in $T'$. \vspace{2pt}
\item Let $g \in \SL_2(q)$ be such that $g \neq \pm 1$. Then, $C(g)=\{\pm 1\}U$ if $g \in U$, $C(g)=T$ if $g \in T$, and $C(g)=T'$ if $g \in T'$. \vspace{2pt}
\item Every element of order coprime with $p$ (i.e., semisimple) is conjugate to an element of $T \cup T'$.
\end{enumerate}
\enlargethispage{2\baselineskip}

We keep the notation $\pi:\SL_2(q) \rightarrow \PSL_2(q)$ for the canonical projection. \par \vspace{2pt}

To describe the action of $\textrm{Aut}(\PSL_2(q))$ on the set of subgroups of central type of $\PSL_2(q)$, we view $\PSL_2(q)$ inside $\PGL_2(q)$ as the image of $\SL_2(q)$ through the canonical projection $\GL_2(q) \twoheadrightarrow \PGL_2(q)$. Under this identification, $\PSL_2(q)=\PGL_2(q)$ if $q$ is even, and, if $q$ is odd, $\PSL_2(q)$ is the unique proper normal subgroup of $\PGL_2(q)$, see \cite[Section 1]{Su3}. The action by conjugation of $\PGL_2(q)$ on $\PSL_2(q)$ gives rise to an injective group homomorphism $\PGL_2(q) \rightarrow \textrm{Aut}(\PSL_2(q))$.

\begin{proposition}\label{subPSL2}
Let $M$ be a non-trivial abelian subgroup of central type of $\PSL_2(q)$. Then, $M$ is one of the following subgroups:
\begin{itemize}
\item[(i)] A subgroup of $\pi(U)$ of square order, up to conjugation. \vspace{2pt}
\item[(ii)] A subgroup isomorphic to $C_2\times C_2$. Moreover, when $q$ is odd, there is a single orbit for the action of $\PGL_2(q)$ (and thus of $\Aut(\PSL_2(q))$) on the set consisting of such subgroups.
\end{itemize}
\end{proposition}

\begin{proof}
The proof is divided into two parts. In the first part we describe how an abelian subgroup of central type is. In the second part we deal with the statement about the orbit for the action on the Klein four-groups. \par \vspace{2pt}

\emph{1. Description of $M$.} We first show that $M$ must be either a $p$-group or a $2$-group. The justification of this assertion will cover item (i). \par \vspace{2pt}

Since $M$ is abelian of central type, we know that $M \simeq E \times E$ for some subgroup $E$. Let $r$ be a prime divisor of $\vert E \vert$. We will see that $r=2$ or $r=p$. Let $\bar{g},\bar{h}$ be elements of $M$, one in each copy of $E$, such that $\ord(\bar{g})=\ord(\bar{h})=r$. Then, $\bar{g}$ and $\bar{h}$ commute and $\bar{h} \notin \langle \bar{g} \rangle$. {Put $\bar{g}={\pi}(g)$ and $\bar{h}={\pi}(h)$ for some $g,h \in \SL_2(q)$}. 

We distinguish two cases: \par \vspace{4pt}

I. Case $q$ even. Every element of $\SL_2(q)$ is conjugate to an element of $U,T$ or $T'$. Suppose that $g$ were conjugate to an element of $T$. We would have that $C(g)=T$ and that $g$ and $h$ generate a non-cyclic subgroup of $C(g)$. This is not possible because $T$ is cyclic. The same argument applies if $g$ were conjugate to an element of $T'$. Assume that $g$ is conjugate to an element $u$ of $U$. Then, $M$ is conjugate to a subgroup of $C(u)$. The latter equals $U$. Hence, $r=2$ and $M$ is conjugate to a $2$-group of square order. This establishes item (i) for $q$ even. \par \vspace{4pt}

II. Case $q$ odd. Every element of $\SL_2(q)$ is conjugate\vspace{0.25pt} to an element of the following form: $\pm u$, with $u \in U$, $d(t)$ or $d'(a+b\zeta\hspace{0.5pt})$. The elements $g$ and $h^2$ commute and $g$ is non-central. Proceed as before with these two elements and the subgroup generated by them. Take into account that $C(u)$ is now $\{\pm 1\}U$ for $u \in U$ with $u \neq 1$. We obtain that $h^2=1$ or $h^{4p}=1$. Hence, $r=2$ or $r=p$. \par \vspace{2pt}

Suppose that $p$ divides $\vert E \vert$. We take $\bar{g}$ and $\bar{h}$ of order $p$. Then, $g$ is conjugate to $u$ or $-u$, for some $u \in U$, and $M$ is conjugate to a subgroup of ${\pi}(C(u))$. As the latter equals ${\pi}(U)$, item (i) follows for $q$ odd. On the other hand, if $p$ does not divide $\vert E \vert$, then, according to the previous paragraph, the only prime divisor of $\vert E \vert$ is $2$, and every non-trivial element of $M$ has order $2$. That is, $M$ is an elementary abelian $2$-group. \par \vspace{2pt}

For the rest of the proof, we assume that $p$ is odd and $M$ is an elementary abelian $2$-group. We know that every element of $\PSL_2(q)$ of order $2$ is conjugate to an element in ${\pi}(T \cup T')$. By reason of orders, ${\pi}(T \cup T')$ has a unique element of order $2$. Up to conjugation, we can assume that $M$ contains such an element which we denote again by $\bar{h}$. We now distinguish two cases for $q$: \par \vspace{3pt}

A. Case $q\equiv_4 1$. In this case, $\F_q$ has\vspace{0.5pt} a primitive fourth root of unity, say $\eta$. Suppose that $\bar{h}={\pi}(d(\eta))={\pi}\hspace{-2pt}\left(\begin{smallmatrix} \eta & 0 \\ 0 & \eta^{-1} \end{smallmatrix}\right)$. One can check that the centralizer of $\bar{h}$ is ${\pi}(T \cup \rho\vspace{0.5pt} T)$, where
$\rho = \left(\begin{smallmatrix} 0 & 1 \\ -1 & 0 \end{smallmatrix}\right)$. Then,\vspace{1pt} $\bar{g}={\pi}\hspace{-1.5pt}\left(\begin{smallmatrix} 0 & \alpha \\ -\alpha^{-1} & 0 \end{smallmatrix}\right)$, for some $\alpha \in \F_q^\times$. We  have that $M$ is contained in the centralizer \vspace{2pt} $C(\langle \bar{g},\bar{h} \rangle)$ of $\langle \bar{g},\bar{h} \rangle$. A direct calculation shows \vspace{2pt} the equality $C(\langle \bar{g},\bar{h} \rangle)=\langle \bar{g},\bar{h} \rangle$, so $M=\langle \bar{g},\bar{h} \rangle$ and $M \simeq C_2 \times C_2$. \par \vspace{3pt}

B. Case $q\equiv_4 -1$. In this case, $-1$ is not a square in $\F_q$. We define\vspace{1pt} the non-split torus $T'$ by $\zeta \in \F_{q^2}$ such that $\zeta^2=-1$. The only element\vspace{1pt} of order $2$ in ${\pi}(T \cup T')$ is now $\bar{h}={\pi}(d'(\zeta))= {\pi}\hspace{-1.5pt}\left(\begin{smallmatrix} 0 & 1 \\ -1 & 0 \end{smallmatrix}\right)$. Consider the following subset of $\SL_2(q)$:\vspace{-1.5mm}
$$S:=\left\{\left(\begin{smallmatrix} x & y \\ y & -x \end{smallmatrix}\right):\hspace{2pt} x,y\hspace{1pt}  \in \F_q \textrm{ and } \hspace{1pt} x^2+y^2=-1\right\}.\vspace{-1.5mm}$$
Notice that $\vert S \vert = q+1$. One can\vspace{2pt} verify that the centralizer of $\bar{h}$ equals ${\pi}(T'\cup S)$. Then, $\bar{g}={\pi}\hspace{-1.5pt}\left(\begin{smallmatrix} x & y \\ y & -x \end{smallmatrix}\right)$, for some $x,y \in \F_q$ as above. The\vspace{1pt} equality $C(\langle \bar{g},\bar{h} \rangle)=\langle \bar{g},\bar{h} \rangle$ holds in this case as well. It implies that $M=\langle \bar{g},\bar{h} \rangle$ and $M \simeq C_2 \times C_2$. \par \vspace{6pt}

\emph{2. Transitivity of the actions on the Klein four-groups.} Let $\Mcal$ denote the set consisting of subgroups of $\PSL_2(q)$ that are isomorphic to $C_2 \times C_2$. Bear in mind our identification of $\PSL_2(q)$ with a normal subgroup of $\PGL_2(q)$. We will prove that the action by conjugation of  $\PGL_2(q)$ on $\Mcal$ is transitive. This will imply that the action of $\textrm{{Aut}}(\PSL_2(q))$ is as well. It will suffice to check that the following inequality holds for $M \in \Mcal$:
$$|\Mcal| \leq \frac{|\PGL_2(q)|}{|\textrm{Stab}(M)|}=\frac{|\PGL_2(q)|}{|\textrm{N}_{\PGL_2(q)}(M)|}.$$ (This will be actually an equality as there will be only one orbit for the action). The cardinality of $\Mcal$ is $\frac{|\PSL_2(q)|}{12}$, see \cite[Exercise 5(c), page 417]{Su2}.  With the information available in this proof, this can be deduced from the fact that the centralizer of $\bar{h}$ is isomorphic to a dihedral group of order $q-1$ when $q\equiv_4 1$ and of order $q+1$ when $q\equiv_4 -1$. Hence, we will need to verify that
$$\frac{q(q^2-1)}{24} |\textrm{N}_{\PGL_2(q)}(M)| \leq q(q^2-1),$$
i.e., that $|\textrm{N}_{\PGL_2(q)}(M)| \leq 24$. This inequality\vspace{0.5pt} can be attained in the following way. The action of $\textrm{N}_{\PGL_2(q)}(M)$ on the set $\{\bar{g},\bar{h},\bar{g}\bar{h}\}$ of non-trivial elements\vspace{0.5pt} in $M$ induces a group homomorphism $\textrm{N}_{\PGL_2(q)}(M) \to \mathbb{S}_3$ whose kernel\vspace{0.5pt} is the centralizer $C_{\PGL_2(q)}(M)$ of $M$ in $\PGL_2(q)$. A direct calculation\vspace{0.5pt} shows that $C_{\PGL_2(q)}(M)=M$. Therefore, $|\textrm{N}_{\PGL_2(q)}(M)| \leq 4|\mathbb{S}_3|=24$. This establishes\vspace{0.5pt} the second part of the statement of (ii) and finishes the proof.
\end{proof}

\begin{remark}
The number of conjugacy classes in $\PSL_2(q)$ of the Klein four-groups is one if $q\equiv_8 \pm 3$ and two if $q\equiv_8 \pm 1$; see \cite[Theorem 2.1, items (j) and (k)]{Ki} or \cite[Exercise 5(d), page 417]{Su2}.
\end{remark}

\subsection{Non-existence of integral Hopf orders}

The goal of this subsection is to establish the following result:

\begin{theorem}\label{main2}
Let $K$ be a number field and $R\subset K$ a Dedekind domain such that $\Oint_K \subseteq R$. Let $p$ be\vspace{1pt} a prime number and $q=p^m$ with $m \geq 1$. (We assume that $m>1$ when $p=2,3$.) Let $M$ be a non-trivial subgroup\vspace{-0.25pt} of central type of $\PSL_2(q)$ and $\omega: \widehat{M} \times \widehat{M} \rightarrow K^{\times}$ a normalized non-degenerate cocycle. \par \vspace{2pt}

If the twisted Hopf algebra $(K \PSL_2(q))_{\Omega_{M,\omega}}$ admits a Hopf order over $R$, then $\frac{1}{\vert M \vert} \in R$. Hence, $(K \PSL_2(q))_{\Omega_{M,\omega}}$ does not admit a Hopf order over $\Oint_K$.
\end{theorem}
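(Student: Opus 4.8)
The plan is to reduce, via the classification in Proposition \ref{subPSL2}, to a single new computation and otherwise quote Theorem \ref{main1}. By Proposition \ref{subPSL2} a non-trivial central-type abelian subgroup $M$ of $\PSL_2(q)$ is, up to conjugation, either a $p$-subgroup of square order or a Klein four-group $C_2\times C_2$; and when $p=2$ the latter is itself a $2$-subgroup of square order. If $M$ is a $p$-subgroup of square order then necessarily $m>1$ (the Sylow $p$-subgroup of $\PSL_2(q)$ has order $p^m$, so a subgroup of order $p^{2n}$ forces $m\geq 2$), and the conclusion $\frac{1}{|M|}\in R$ is exactly the content of Theorem \ref{main1}. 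Hence the only genuinely new case is $p$ odd with $M\simeq C_2\times C_2$; here $|M|=4$, and since $\frac12\in R\iff\frac14\in R$ it suffices to prove $\frac12\in R$.

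For this case I would first cut down the subgroups to be inspected. The element $y_\chi$ of Proposition \ref{key}, and therefore the scalar $\chi(y_\chi^2)$, are visibly independent of $\omega$; moreover Remark \ref{twistiso} shows that an automorphism (in particular a conjugation) carrying $M$ to $M'$ induces an isomorphism $(K\PSL_2(q))_{\Omega_{M,\omega}}\simeq(K\PSL_2(q))_{\Omega_{M',\omega'}}$ of the same dimension. Thus the existence of an integral Hopf order, and the value of $\frac{1}{|M|}$, depend only on the class of $M$ up to conjugation and automorphisms. By Proposition \ref{subPSL2}(ii) this leaves a single class to examine (the two conjugacy classes appearing in item (ii.2) being swapped by an automorphism), and I would take the explicit representative produced in the proof of Proposition \ref{subPSL2}: an $M$ generated by involutions of the split-torus normalizer when $q\equiv_4 1$, and of the non-split-torus normalizer when $q\equiv_4 3$.

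With $M$ so fixed, I would run the strategy of Section \ref{stfrm} with the same induced character $\chi=\Ind_{\pi(U)}^{\PSL_2(q)}(\Uno)$ used for the $p$-subgroup case, whose non-zero values $\chi(1)=\frac{q^2-1}{2}$ and $\chi(u)=\frac{q-1}{2}$ (on non-trivial unipotents $u$) are recorded in Subsection \ref{PSL(2,q)}, so that $I_\chi=\{\{1\},P^\bullet\}$ with $P^\bullet$ the set of non-trivial unipotents. The point of departure from Theorem \ref{main1} is that $M$ now consists of semisimple involutions and lies in no Sylow $p$-subgroup, so the overlap of the double coset $M\tau M$ with the unipotent support of $\chi$ has to be engineered. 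Concretely I would choose $\tau$ with $M\cap(\tau M\tau^{-1})=\{1\}$ — available precisely because $M$ is not normal, which is where the hypothesis $m>1$ for $p=3$ is needed, as the Klein four-group of $\PSL_2(3)=A_4$ is normal — and with $M\tau M$ meeting $P^\bullet$. I would then compute $M_{P^\bullet}=\{v\in M:\tau v\in P^\bullet\}$, determine through the trace and the rank of $g-1$ which products $\tau x x' v\tau v^{-1}$ lie in $P$ and of which Jordan type, and substitute into Equation \ref{chiychi2fib}. Because the prefactor $\frac{(q-1)^2}{|M|^2}$ and the character values all have denominator a power of $2$, the outcome is a rational number whose reduced denominator is a power of $2$; provided it is non-integral, $\chi(y_\chi^2)\in R$ (Propositions \ref{character} and \ref{decomp}) together with Bezout's identity forces $\frac12\in R$, and hence $\frac{1}{|M|}\in R$.

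The principal obstacle is this last computation. In Theorem \ref{main1} the subgroup $M$ lies inside the unipotent radical, so the products appearing in Equation \ref{chiychi2fib} are triangular and their traces and Jordan types are immediate; here $M$ is semisimple and straddles a torus normalizer, so one must choose $\tau$ meeting the two incompatible-looking demands $M\cap(\tau M\tau^{-1})=\{1\}$ and $M\tau M\cap P\neq\emptyset$ at once, and then grind out the trace and rank calculations separately in the regimes $q\equiv_4 1$ and $q\equiv_4 3$, where the matrix shapes of the involutions differ. The decisive and most delicate point is to confirm that the resulting numerator is odd, i.e.\ that $\chi(y_\chi^2)$ is genuinely non-integral, for this is exactly what makes the $2$-part of the denominator survive and yields $\frac12\in R$.
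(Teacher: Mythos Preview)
Your overall reduction to the Klein four-group case via Proposition \ref{subPSL2} and Theorem \ref{main1} is correct, and your identification of the crux---showing $\chi(y_\chi^2)$ is genuinely non-integral---is on the mark. However, your commitment to use only the induced character $\chi=\Ind_{\pi(U)}^{\PSL_2(q)}(\Uno)$ has a fatal flaw in the regime $q\equiv_4 1$: every value of $\chi$ on $P$ is then even. Indeed $\chi(1)=\frac{q^2-1}{2}$ and $\chi(u)=\frac{q-1}{2}$ for $u\in P^\bullet$, and when $q\equiv_4 1$ both $\frac{q-1}{2}$ and $\frac{(q-1)(q+1)}{2}$ are even. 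Hence the prefactor $\chi(P^\bullet)^2/|M|=\big(\frac{q-1}{2}\big)^2\big/4$ in Equation~\ref{chiychi2fib} is already an integer, and the inner sum $\sum\chi(\tau xx'v\tau v^{-1})$ is a sum of even integers; so $\chi(y_\chi^2)\in\Z$ for \emph{every} choice of $\tau$, and you extract no information. The parity check you flag as ``decisive'' is not merely delicate here---it fails outright.

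The paper circumvents this by replacing $\chi$, when $q\equiv_4 1$, with a virtual character $\varphi=\Uno_G-\mathrm{St}-2\sum_{\theta}R'(\theta)$ supported on $P$ whose value on non-trivial unipotents is $\frac{q+1}{2}$, odd precisely when $q\equiv_4 1$. There are further refinements your sketch does not anticipate: the paper reduces to $\PSL_2(p)$ via Proposition \ref{subsquo}(i) (the Klein four-group can be taken inside $\PSL_2(p)$), and then treats the small primes $p=3,5,7$ separately with ad hoc characters, because the lemma guaranteeing a suitable $\tau$ (one with $\tau v\notin P$ for all $v\in M\setminus\{1\}$, so that the sum in Equation~\ref{chiychi2fib} collapses to a single controllable term) requires $\frac{p-1}{2}>3$. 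Your proposal to ``engineer'' $\tau$ so that $M\tau M$ meets $P^\bullet$ is the right instinct for $q\equiv_4 3$, but even there the paper chooses $\tau$ unipotent and arranges instead that $\tau v\notin P$ for $v\neq 1$, which is what makes the computation close cleanly.
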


We will use several characters of $\PSL_2(q)$ in the proof. Composition with the projection $\pi:\SL_2(q) \to \PSL_2(q)$ induces a one-to-one correspondence between characters of $\PSL_2(q)$ and characters of $\SL_2(q)$ with kernel containing $\{\pm 1\}$; see \cite[Lemma 2.22]{I}. In particular, if $\varphi$ is a character of $\SL_2(q)$ such that $\varphi(1)=\varphi(-1)$, then $\tilde{\varphi}:\PSL_2(q) \rightarrow \Co, \pi(g) \mapsto \varphi(g)$ is a character of $\PSL_2(q)$. \par \vspace{2pt}

For convenience, we will work directly with the character table of $\SL_2(q)$. We will use that in \cite[Table 5.4, page 58]{B}. We entirely adopt the notation fixed there (we overlook the clash with our $R$, which is easily resolved from the context). The necessary information to work with this table can be found in the following parts of \cite{B}. For the conjugacy classes, see: Equation 1.1.9 in page 5, Theorem 1.3.3 in page 8, Table 1.1 in page 9, and Exercise 1.4(d) in page 12. For the irreducible characters, see: Subsection 3.2.3 in page 32, Summary 3.2.5 in page 34, Remark in page 35, Section 4.3 in page 45, Exercise 4.1(c) in page 48, and Proposition 5.3.1 in page 57. \par \vspace{2pt}

The original description of the character table of $\SL_2(q)$ given by Schur can be found in \cite[Theorem 38.1]{D}. The character table of $\PSL_2(q)$ appears, for instance, in \cite[Theorems 8.9 and 8.11, pages 280-282]{Ka2}. We stress that its size is $\frac{q+5}{2}$ when $q$ is odd.
\par \smallskip

{\it Proof of Theorem \ref{main2}}. In view of Proposition \ref{subPSL2} and Theorem \ref{main1} we only have to prove the statement when $p$ is odd and $M$ is isomorphic to the Klein four-group. Fix $(x,y) \in \F_p \times \F_p$ such that $x^2+y^2=-1$. By Remark \ref{twistiso} and Proposition \ref{subPSL2}(ii) we can assume furthermore that $M$ is the subgroup of $\PSL_2(q)$ generated by
$r=\pi \hspace{-1.5pt} \left(\begin{smallmatrix}
0 & 1 \\
-1 & 0
\end{smallmatrix}\right)$ and $s=\pi \hspace{-1.5pt} \left(\begin{smallmatrix}
x & y \\
y & -x
\end{smallmatrix}
\right)$. We will use two different characters in the proof according to the following distinction: \par \vspace{3pt}

A. Case $q \equiv_4 -1$. This case follows the same strategy as that of Subsection \ref{PSL(2,q)}. We take the induced character $\chi=\Ind_{\pi(U)}^{\PSL_2(q)}(\Uno_{\pi(U)})$ from the Sylow $p$-subgroup $\pi(U)$ of $\PSL_2(q)$. We know that its non-zero values are: \vspace{-3pt}
$$\chi(1)= \frac{q^2-1}{2} \hspace{6.5mm} \textrm{and} \hspace{6.5mm} \chi \hspace{-1pt}\left( \pi \hspace{-2pt}\left(\begin{smallmatrix} 1 & c \\
0 & 1\end{smallmatrix}\right)\right) = \frac{q-1}{2}, \hspace{3mm} \textrm{for }c \neq 0.\vspace{-3pt}$$

B. Case $q \equiv_4 1$. Set $\Theta=\{\theta \in  \overline{\mu_{q+1}}: \theta(1)\hspace{1pt} =\hspace{1pt} \theta(-1)\}$. Here, $\overline{\mu_{q+1}}$ denotes the set parameterizing the family of irreducible characters $R'(\theta)$ of $\SL_2(q)$, see \cite[page 45]{B}. We consider the character \vspace{-3pt}
$$\varphi = \Uno_G - \textrm{St} - 2 \sum_{\theta \in \Theta} R'(\theta).\vspace{-3pt}$$
Observe that $\varphi(1)=\varphi(-1)$, so we can view $\varphi$ as a character of $\PSL_2(q)$. Its values are given in the following table: \vspace{1.5mm}

\begin{center}
\scalebox{0.93}{$\begin{array}{|c|ccccc|} \hline
\textrm{Conjugacy class}       & \pi(1)           & \hspace{-2.1mm}\pi(u_+)        & \hspace{1mm} \pi(u_{-})    & \hspace{1mm} \pi(d(a)) & \hspace{1mm} \pi(d'(\xi)) \\ \hline
\varphi & \hspace{8pt}\frac{1-q^2}{2}^{\phantom{M}}_{\phantom{p}} & \hspace{-2.1mm} \frac{q+1}{2} & \hspace{1mm} \frac{q+1}{2} & \hspace{1mm} 0    &   \hspace{1mm} 0 \\ \hline
\end{array}$}\vspace{1.5mm}
\end{center}
The elements $\pi(u_+)$ and $\pi(u_{-})$ have order $p$, whereas the elements of the form $\pi(d(a))$ and $\pi(d'(\xi))$ do not. \par \vspace{2pt}

As in Subsection \ref{PSL(2,q)}, we write $P$ for the set of elements of order $p$ together with the identity element. Recall\vspace{0.5pt} that $P=\{\pi(A) \in \PSL_2(q) : \Tr(A)=\pm 2\}$. Define $\psi=\varphi$ if $q \equiv_4 1$ and $\psi=\chi$ if $q \equiv_4 -1$. Observe\vspace{1.5pt} that $\psi$ vanishes outside $P$. Define also $n_q=\frac{q+1}{2}$ if $q \equiv_4 1$ and $n_q=\frac{q-1}{2}$ if $q \equiv_4 -1$. \par \vspace{3pt}

We need to treat separately the first three values of $p$: \par \vspace{3pt}
\enlargethispage{-3.5\baselineskip}

1. Case $p=3$. Here we assume that $m>1$ since $\PSL_2(3)$ is not simple.\vspace{1pt} We consider the pair $(x,y)=(1,1)$.
Take $\tau=\pi \hspace{-1.5pt} \left(\begin{smallmatrix}
1 & 0 \\
\lambda & 1
\end{smallmatrix}\right)$, where $\lambda \in \F_q \menos \F_3$. One can check\vspace{1pt} that this choice of $\lambda$ ensures that $M \cap (\tau M \tau^{-1})=\{1\}$. We compute the element $y_{\psi}$ as in \eqref{ychi}:

\begin{equation}\label{computation1}
\begin{array}{l}
y_{\psi} = {\displaystyle \frac{1}{\vert M \vert} \sum_{v,v' \in M} \psi(v\tau v')v\tau v'} \vspace{2pt} \\
\phantom{y_{\psi}} = {\displaystyle \frac{1}{\vert M \vert} \sum_{v,v' \in M} \psi(\tau v'v)v \tau v'} \vspace{2pt} \\
\phantom{y_{\psi}} = {\displaystyle \frac{1}{\vert M \vert} \sum_{v,v' \in M} \psi(\tau v')v \tau v'v} \vspace{2pt} \\
\phantom{y_{\psi}} = {\displaystyle \frac{n_q}{4} \sum_{v \in M} v \tau v}.
\end{array}
\end{equation}
In the last step we used that $\psi(\tau v')=0$ for $v' \neq 1$ and $\psi(\tau)=n_q$. For this, note that the trace of the matrices involved is $\lambda$, which is different from $\pm 2$. We now compute $\psi(y_{\psi}^2)$:
\begin{equation}\label{computation2}
\begin{array}{ll}
\psi(y_{\psi}^2) & = {\displaystyle \frac{n_q^2}{16} \sum_{v,v' \in M} \psi(v \tau vv' \tau v')} \vspace{2pt} \\
 & = {\displaystyle \frac{n_q^2}{16} \sum_{v,v' \in M} \psi(\tau vv' \tau vv')} \vspace{2pt} \\
 & = {\displaystyle \frac{n_q^2}{4} \sum_{v \in M} \psi\big((\tau v)^2\big)} \vspace{2pt} \\
 & = {\displaystyle \frac{n_q^3}{4}}.
\end{array}
\end{equation}
We\vspace{1pt} used in the final equality that $\psi((\tau v)^2)=0$ for $v \neq 1$ and $\psi(\tau^)=n_q$. Again note that the trace of the matrices involved is different from $\pm 2$ because $\lambda \notin \F_3$. \par \vspace{2pt}

In light of Propositions \ref{character} and \ref{decomp}, we have that $\frac{n_q^3}{4} \in R$. Since $\gcd(n_q,4)=1$, we obtain that $\frac{1}{4} \in R$. This establishes the statement for $p=3$. \par \vspace{4pt}

To deal with the rest of values of $p$, observe that $\PSL_2(p)$ is a subgroup of $\PSL_2(q)$. Recall that we took the pair $(x,y)$ in $\F_p \times \F_p$. Since $M$ is contained in $\PSL_2(p)$, by Proposition \ref{subsquo}(i), we can handle $\PSL_2(p)$ instead of $\PSL_2(q)$ for our purpose. \par \vspace{2pt}

2. Case $p=5$. This case was discussed in \cite[Theorem 3.3]{CM2} as $\PSL(2,5) \simeq A_5$. Nevertheless, we provide a proof in this context for completeness. We consider the pair $(x,y)=(2,0)$. Take $\tau=\pi \hspace{-1.5pt} \left(\begin{smallmatrix}
1 & 0 \\
1 & 1
\end{smallmatrix}\right)$. The condition $M \cap (\tau M \tau^{-1})=\{1\}$ holds. Pick a primitive $4$th root of unity $\eta$ and a primitive $6$th root of unity $\nu$ in $\Co$. Let $\alpha$ be the complex character of $C_4$ determined by $\eta^2$. Let $\theta$ be the complex character of $C_6$ determined by $\nu^2$. We work with the character $\phi=\textrm{St}+R(\alpha)+R'(\theta)$. Note that $\phi(1)=\phi(-1)$, so we can view $\phi$ as a character of $\PSL_2(5)$. Its values are given in the following table:\vspace{1.5mm}

\begin{center}
\scalebox{0.93}{$\begin{array}{|c|ccccc|} \hline
\textrm{Conjugacy class}  & \pi(1) & \pi(u_+)  & \pi(u_{-})   &   \pi(d(a))  & \pi(d'(\xi))  \\ \hline
\textrm{Order}            &  1     &  5        &     5        &        2        &       3       \\ \hline
\phi                      &  15    &  0        &     0        &        -1       &       0       \\ \hline
\end{array}$}\vspace{1.5mm}
\end{center}
Here, $a$ is an element in $\F_5^{\times}$ of order $4$ and $\xi$ is an element in $\F_{25}^{\times}$ of order $6$. One can see that $\tau\hspace{-1pt} s$ has order $2$, $\tau r$ has order $3$, and $\tau$ and $\tau rs$ have order $5$. Then, $\phi(\tau)=\phi(\tau r)=\phi(\tau rs)=0$ and $\phi(\tau\hspace{-1pt} s)=-1$. We calculate $y_{\phi}$ as in \eqref{computation1}:
$$y_{\phi} = \frac{1}{\vert M \vert} \sum_{v,v' \in M} \phi(\tau v')v \tau v'v = -\frac{1}{4} \sum_{v \in M} v (\tau\hspace{-1pt} s) v.$$
We now compute $\phi(y_{\phi}^2)$ as in \eqref{computation2}. We get:
$$\begin{array}{ll}
\phi(y_{\phi}^2) & \hspace{-2.5mm} = {\displaystyle \frac{1}{16} \sum_{v,v' \in M} \phi\big(v(\tau\hspace{-1pt} s v')^2 v\big)} \vspace{3pt} \\
 & \hspace{-2.5mm}= {\displaystyle \frac{1}{16} \sum_{v,v' \in M} \phi\big((\tau\hspace{-1pt} s v')^2 v^2\big)} \vspace{3pt} \\
 & \hspace{-2.5mm}= {\displaystyle \frac{1}{4} \sum_{v \in M} \phi\big((\tau\hspace{-1pt} s v)^2\big)} \vspace{3pt} \\
 & \hspace{-2.5mm}= {\displaystyle \frac{15}{4}}.
\end{array}$$
By Propositions \ref{character} and \ref{decomp}, we have that $\frac{1}{4} \in R$. This gives the statement of Theorem \ref{main2} for $p=5$. \par \vspace{4pt}
\enlargethispage{\baselineskip}

3. Case $p=7$. The proof of this case follows the lines of the preceding one. We work with the pair $(x,y)=(2,3)$. As before, we take $\tau=\pi \hspace{-1.5pt} \left(\begin{smallmatrix}
1 & 0 \\
1 & 1
\end{smallmatrix}\right)$, which satisfies $M \cap (\tau M \tau^{-1})=\{1\}$. Pick a primitive $6$th root of unity $\eta$ and a primitive $8$th root of unity $\nu$ in $\Co$. Let $\alpha$ be the complex character of $C_6$ determined by $\eta^2$. Let $\theta$ be the complex character of $C_8$ determined by $\nu^2$. The character of $\SL_2(7)$ that we will use is $\phi=R(\alpha)+R'(\theta)$, which can be viewed as a character of $\PSL_2(7)$ because $\phi(1)=\phi(-1)$. The values of $\phi$ are given in the following table: \vspace{1.5mm}

\begin{center}
\scalebox{0.93}{$\begin{array}{|c|cccccc|} \hline
\textrm{Conjugacy class}  & \pi(1)  & \pi(u_+)  & \pi(u_{-})   &   \pi(d(a))  & \pi(d'(\xi)) &  \pi(d'(\xi^2)) \\ \hline
\textrm{Order}            &  1      &  7        &     7        &        3     &       4      &      2               \\ \hline
\phi                      &  14     &  0        &     0        &        -1    &       0      &      2 \\ \hline
\end{array}$}\vspace{1.5mm}
\end{center}
Here, $a$ is an element in $\F_7^{\times}$ of order $6$ and $\xi$ is an element in $\F_{49}^{\times}$ of order $8$. One can check that $\tau r$ has order $3$, $\tau s$ has order $4$, and $\tau$ and $\tau rs$ have order $7$. Then, $\phi(\tau)=\phi(\tau\hspace{-1pt} s)=\phi(\tau rs)=0$ and $\phi(\tau r)=-1$. We compute $y_{\phi}$ as in \eqref{computation1}:
$$y_{\phi} = \frac{1}{\vert M \vert} \sum_{v,v' \in M} \phi(\tau v')v \tau v'v = - \frac{1}{4} \sum_{v \in M} v (\tau r) v.$$
We now compute $\phi(y_{\phi}^2)$ as in \eqref{computation2}. We get:
$$\phi(y_{\phi}^2) = \frac{1}{16} \sum_{v,v' \in M} \phi\big((\tau r v')^2 v^2\big) = \frac{1}{4} \sum_{v \in M} \phi\big((\tau r v)^2\big) = \frac{1}{4}.$$
By Propositions \ref{character} and \ref{decomp}, we have that $\frac{1}{4} \in R$. This establishes the statement of Theorem \ref{main2} for $p=7$. \par \vspace{4pt}

4. Case $p>7$. We need the following lemma, which will be useful in detecting that certain elements do not belong to $P$:

\begin{lemma}\label{lambda}
Let $(x,y) \in \F_p \times \F_p$ be such that $x^2+y^2=-1$. Then, there is $\lambda \in \F_p^{\times}$ such that $\lambda, \lambda x,$ and $\lambda y$ are all different from $\pm 2$.
\end{lemma}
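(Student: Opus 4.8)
The plan is to prove existence by counting the values of $\lambda$ that are \emph{excluded} by the three conditions, and to show that these are strictly fewer than $p-1=\vert \F_p^\times\vert$. First I would list the forbidden values: a nonzero $\lambda$ violates one of the constraints exactly when it lies in
\[
B:=\{\pm 2\}\,\cup\,\{\pm 2x^{-1}: x\neq 0\}\,\cup\,\{\pm 2y^{-1}: y\neq 0\},
\]
where the second set is empty if $x=0$ and the third is empty if $y=0$ (note that $0\neq\pm 2$ in $\F_p$ since $p>2$, so $\lambda x=0$ or $\lambda y=0$ never causes a problem). Each of the three pieces contributes at most two elements, so $\vert B\vert\le 2+2+2=6$, and clearly $B\subseteq\F_p^\times$.

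Since $p>7$ forces $p\ge 11$, the admissible set $\F_p^\times\menos B$ has at least $(p-1)-6\ge 4$ elements; choosing any $\lambda$ in it finishes the proof.

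The argument is essentially immediate, so there is no real obstacle; the only point requiring care is the bookkeeping when $x=0$ or $y=0$, in which case the corresponding pair of forbidden values simply disappears rather than being created, so the bound $\vert B\vert\le 6$ holds in every case. (One could also note that $x$ and $y$ are never simultaneously zero, since $x^2+y^2=-1\neq 0$, but this is not needed for the count.) It is worth remarking that the threshold $p>7$ is sharp for this crude estimate: over $\F_7$ the six forbidden nonzero values can exhaust all of $\F_7^\times$, which is precisely why the primes $p\le 7$ were treated by separate explicit choices in the preceding cases.
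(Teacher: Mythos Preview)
Your counting argument is correct and cleaner than the paper's proof. The paper proceeds by case analysis: it first observes that $x$ and $y$ cannot both equal $\pm 2$ (else $8=-1$ forces $p=3$), takes $\lambda=1$ when neither does, and in the remaining case (say $x=\pm 2$) computes $y^2=-5$ and argues that one must avoid $\lambda^2\in\{1,4,-5^{-1}\cdot 4\}$, which is possible because there are $\frac{p-1}{2}>3$ squares in $\F_p^\times$. Your approach bypasses this case split entirely by bounding the bad set directly by $6<p-1$; it is more uniform and immediately explains why the bound $p>7$ is the natural threshold. The paper's route has the minor practical advantage of exhibiting $\lambda=1$ explicitly in the generic case, but for the purposes of the lemma your pigeonhole argument is both shorter and equally rigorous.
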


\begin{proof}
Note that either $x \neq \pm 2$ or $y \neq \pm 2$. Otherwise, we would have $2^2+2^2=-1$, which would give $p=3$.
\par \vspace{2pt}

If $x \neq \pm 2$ and $y \neq \pm 2$, then we can take $\lambda=1$ and we are done. Suppose that $x=\pm 2$ and $y \neq \pm 2$. Then,
$-1 = x^2+y^2 =4+y^2$. This\vspace{1pt} implies $y^2 = -5$, and thus $(\lambda y)^2 = -5 \lambda^2.$ From the latter, we deduce that $\lambda y \neq \pm 2$ if and only\vspace{1pt} if $\lambda^2 \neq -5^{-1} \cdot 4$. We also want $\lambda^2 \neq 4$. So, we must choose $\lambda$ such that $\lambda^2 \notin  \{1,4,-5^{-1} \cdot 4\}$. The number\vspace{1pt} of squares in $\F_p^{\times}$ is $\frac{p-1}{2}$, and $\frac{p-1}{2}>3$ if $p>7$. This guarantees a choice of $\lambda$ with the required properties. The argument is the same for $x \neq \pm 2$ and $y=\pm 2$.
\end{proof}

We pick $\lambda$ satisfying the conclusion of Lemma \ref{lambda}. Take $\tau = \pi \hspace{-1.5pt} \left(\begin{smallmatrix}
1 & 0 \\ \lambda & 1 \end{smallmatrix}\right)$. One can verify that our choice of $\lambda$ ensures that $M \cap (\tau M \tau^{-1})=\{1\}$. On the other hand, the cardinality of the set $\{(x,y) \in \F_p \times \F_p : x^2+y^2=-1\big\}$ is $p+1$ if $p \equiv_4 -1$ and $p-1$ if $p \equiv_4 1$. In view of Proposition \ref{subPSL2}(ii), we can assume, for our purpose, that the pair $(x,y)$ chosen to define the element $s$ in $M$ satisfies $xy \neq 0$. \par \vspace{2pt}

The same computations of \eqref{computation1} and \eqref{computation2} give:
$$y_{\psi} = \frac{n_p}{4} \sum_{v \in M} v \tau v \quad \textrm{ and } \quad \psi(y_{\psi}^2) = \frac{n_p^3}{4}.$$
For the first equality we used that $\psi(\tau)=n_p$ and $\psi(\tau v)=0$ for $v \neq 1$. The latter holds because the trace of the matrices involved is $\lambda, \lambda x,$ and $\lambda y$, which is different from $\pm 2$ by assumption. In the second equality, we used:
\begin{enumerate}
\item That $\tau^2, (\tau r)^2, (\tau s)^2,$ and $(\tau rs)^2$ are different from $1$. Note that $(\tau\hspace{-0.5pt} s)^2=1$ implies $y=0$. Similarly, $(\tau r s)^2=1$ implies $x=0$. This would contradict our choice of $(x,y)$. \vspace{2pt}

\item That $\psi$ vanishes on $(\tau r)^2, (\tau s)^2,$ and $(\tau rs)^2$. For this, we argue\vspace{0.5pt} as follows. If $\psi$ did not vanish on $(\tau r)^2$, then $(\tau r)^2$ would have order $p$. This would give that $\tau r$ would have order $p$. But the trace of the matrix
     $$\hspace{12mm}\left(\begin{smallmatrix}
1 & 0 \\ \lambda & 1
\end{smallmatrix}\right)\hspace{-2pt} \left(\begin{smallmatrix}
0 & 1 \\ -1 & 0
\end{smallmatrix}\right) =  \left(\begin{smallmatrix}
0 & 1 \\ -1 & \lambda
\end{smallmatrix}\right)$$ is different from $\pm 2$ by our choice of $\lambda$. The same applies to $(\tau s)^2$ and $(\tau rs)^2$ as:
    $$\hspace{12mm}\left(\begin{smallmatrix}
1 & 0 \\ \lambda & 1
\end{smallmatrix}\right)\hspace{-2pt} \left(\begin{smallmatrix}
x & \hspace{2pt} y \\ y & -x
\end{smallmatrix}\right) =  \left(\begin{smallmatrix}
x & \hspace{2pt} y \\ \lambda x+y & \hspace{2pt} \lambda y -x
\end{smallmatrix}\right) \quad \textrm{ and } \quad
\left(\begin{smallmatrix}
1 & 0 \\ \lambda & 1
\end{smallmatrix}\right)\hspace{-2pt} \left(\begin{smallmatrix}
\hspace{-1pt}-y & x \\ \hspace{1pt} x & y
\end{smallmatrix}\right) =  \left(\begin{smallmatrix}
-y & \hspace{2pt} x \\ -\lambda y+x & \hspace{2pt} \lambda x+y
\end{smallmatrix}\right).$$
\end{enumerate}

By Propositions \ref{character} and \ref{decomp}, we have that $\psi(y_{\psi}^2) \in R$. Since $\gcd(n_p,4)=1$, we obtain that $\frac{1}{4} \in R$. This finishes the proof of Theorem \ref{main2}. \qed

\begin{remark}
When $p \equiv_4 -1$ any solution of $x^2+y^2=-1$ satisfies $xy \neq 0$. When $p \equiv_4 1$, there are solutions such that $xy=0$. We can deal directly with this situation by modifying our argument in the following way. At most one of $(\tau\hspace{-1pt} s)^2$ and $(\tau rs)^2$ is $1$. Then:
$$\psi(y_{\psi}^2) = \Big(\frac{p+1}{4}\Big)^2 \sum_{v \in M} \psi\big((\tau v)^2\big) = \Big(\frac{p+1}{4}\Big)^2 \Big(\gamma+\frac{p+1}{2}\Big),$$
where $\gamma \in \big\{0, \frac{1-p^2}{2}\big\}$. We obtain:
$$\psi(y_{\psi}^2) = \left\{
\begin{array}{ll}
{\displaystyle \Big(\frac{p+1}{4}\Big)^3}  & \textrm{ if } \hspace{5pt} \gamma=0, \vspace{5pt} \\
{\displaystyle \frac{(p+1)^3(2-p)}{32}} & \textrm{ if } \hspace{5pt} {\displaystyle \gamma= \frac{1-p^2}{2}}.
\end{array}\right.$$
Since $p \equiv_4 1$, it follows that $\frac{1}{2} \in R$.
\end{remark}

As was the case with Corollary \ref{complexif}, the following result is coupled with Theorem \ref{main2}:

\begin{corollary}\label{cormain2}
Let $\Omega$ be a non-trivial twist of $\Co \PSL_2(q)$ arising from a $2$-cocycle on an abelian subgroup of $\PSL_2(q)$. Then, the complex semisimple Hopf algebra $(\Co \PSL_2(q))_{\Omega}$ does not admit a Hopf order over any number ring.
\end{corollary}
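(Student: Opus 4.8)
The plan is to deduce this statement from Theorem \ref{main2} by the same complexification argument used for Corollary \ref{complexif}, which in turn follows the pattern of \cite[Corollary 2.4]{CM1}. First I would reduce to the hypotheses of Theorem \ref{main2}. Write $\Omega = \Omega_{M,\omega}$ for an abelian subgroup $M \leq \PSL_2(q)$ and a normalized $2$-cocycle $\omega$ on $\widehat{M}$. If $\omega$ is degenerate, then, as recalled in Section \ref{stfrm}, one replaces $(M,\omega)$ by a subgroup $M_r \leq M$ carrying a non-degenerate cocycle $\omega_r$ with $\Omega_{M,\omega} = \Omega_{M_r,\omega_r}$; since $\Omega$ is non-trivial, $M_r \neq \{1\}$ and $M_r$ is then a non-trivial abelian subgroup of central type. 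At this point Proposition \ref{subPSL2} applies and tells us that $M_r$ is either a $p$-subgroup of square order or a Klein four-group --- precisely the two families covered by Theorem \ref{main2}. So after this reduction we land in the exact setting of that theorem.

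Next I would descend the twist to a number field. Up to a coboundary --- which alters neither the Hopf algebra up to isomorphism nor the property of being non-degenerate --- the cocycle $\omega_r$ can be chosen with values in a cyclotomic field $K$, by \cite[Proof of Proposition 2.1.1]{Ka1}, exactly as in Corollary \ref{complexif}. Then $K$ is a number field over which $\Omega_{M_r,\omega_r}$ is defined, and we obtain an isomorphism $(\Co \PSL_2(q))_{\Omega} \cong (K\PSL_2(q))_{\Omega_{M_r,\omega_r}} \otimes_K \Co$ of complex Hopf algebras.

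Finally, suppose for contradiction that $(\Co\PSL_2(q))_\Omega$ admitted a Hopf order over some number ring $\Oint_F$. I would pass to a number field $L$ containing both $K$ and $F$ and large enough that the given $F$-form and the cyclotomic $K$-form of the complex Hopf algebra become isomorphic over $L$; base-changing the assumed Hopf order along $\Oint_F \hookrightarrow \Oint_L$ then produces a Hopf order of $(L\PSL_2(q))_{\Omega_{M_r,\omega_r}}$ over $\Oint_L$. Applying Theorem \ref{main2} with $R = \Oint_L$ (a Dedekind domain containing $\Oint_L$) would force $\tfrac{1}{|M_r|} \in \Oint_L$; this is absurd, since $|M_r| > 1$ is a rational integer and $\Oint_L \cap \Q = \Z$. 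The main obstacle is precisely this last descent step: one must verify that the two $L$-forms of the same complex Hopf algebra are genuinely isomorphic and that base change carries a Hopf order to a Hopf order, so that the hypothesis of Theorem \ref{main2} is actually met over $\Oint_L$. This is the delicate point already handled in \cite[Corollary 2.4]{CM1}, and I would import that argument, the only new ingredient relative to the cocommutative situation being the cyclotomic realization of $\omega_r$ established in the previous paragraph.
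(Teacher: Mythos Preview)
Your proposal is correct and follows exactly the route the paper indicates: the corollary is stated as being ``coupled with Theorem \ref{main2}'' in the same way that Corollary \ref{complexif} is coupled with Theorem \ref{main1}, and the paper's only remark on the latter's proof is that it runs as in \cite[Corollary 2.4]{CM1} together with the cyclotomic realization of $\omega$ from \cite[Proof of Proposition 2.1.1]{Ka1}. One minor redundancy: your explicit appeal to Proposition \ref{subPSL2} is unnecessary, since Theorem \ref{main2} is already stated for an arbitrary non-trivial abelian subgroup $M$ of central type and absorbs that case analysis internally.
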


This provides a second family of simple groups for which the non-existence of integral Hopf orders is established for \emph{any} non-trivial twist arising from an abelian subgroup. \par \vspace{2pt}

This corollary serves as an introduction for the discussion of the last section.

\section{Twists of finite non-abelian simple groups}
\label{applications}

Let $G$ be a finite non-abelian simple group and $\Omega$ a non-trivial twist for $\Co G$ arising from a $2$-cocycle on an abelian subgroup of $G$. In \cite[Question 5.1]{CM2} it was asked whether $(\Co G)_{\Omega}$ can admit a Hopf order over a number ring. We can partially answer this question in the negative building on Theorems \ref{main1} and \ref{main2} and two results on the subgroups structure of finite non-abelian simple groups. \par \vspace{2pt}

Recall from \cite[Section 2]{Th} that a \emph{minimal simple group} is a non-abelian simple group all of whose proper subgroups are solvable. The following remarkable classification was established in \cite[Corollary 1, page 388]{Th}:

\begin{theorem}[Thompson]\label{Thompson}
Every minimal simple group is isomorphic to one of the following groups:
\begin{enumerate}
\item[(i)]  $\PSL_2(2^p)$, with $p$ a prime. \vspace{1pt}
\item[(ii)] $\PSL_2(3^p)$, with $p$ an odd prime. \vspace{1pt}
\item[(iii)] $\PSL_2(p)$, with $p>3$ prime such that $5$ divides $p^2+1$. \vspace{1pt}
\item[(iv)] ${}^2\!B_2(2^p)$, with $p$ an odd prime. \vspace{1pt}
\item[(v)] $\PSL_3(3)$.
\end{enumerate}
\end{theorem}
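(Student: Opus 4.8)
The statement to be proved is Thompson's classification of the minimal simple groups, a result whose complete proof occupies the \emph{$N$-group} papers; accordingly I can only outline the shape the argument takes rather than give a short proof. The plan is to convert the global hypothesis into local group theory. First I would record that a minimal simple group $G$ is an \emph{$N$-group}: for every nontrivial $p$-subgroup $P$ the normalizer $N_G(P)$ is a \emph{proper} subgroup — a non-abelian simple group has no nontrivial normal $p$-subgroup — hence solvable by minimality. Thus every local subgroup of $G$ is solvable, and the task becomes to classify the simple $N$-groups and then to retain those all of whose proper subgroups (equivalently, all of whose maximal subgroups) are solvable. By the Feit--Thompson odd order theorem $|G|$ is even, so the prime $2$ is available; the invariant that organizes the whole analysis is the isomorphism type of a Sylow $2$-subgroup $T$ of $G$ together with the fusion of its subgroups.

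The central step is recognition. By $2$-local analysis — control of fusion through Alperin's fusion theorem, Thompson factorization, and signalizer-type arguments, all of which are tractable precisely because the local subgroups are solvable — one shows that $T$ lies in a very short list, and for each entry one invokes the matching characterization theorem. Dihedral $T$ forces $G\cong\PSL_2(\ell^f)$ with $\ell$ odd, or $G\cong A_7$ (Gorenstein and Walter); semidihedral or wreathed $T$ forces $G\cong\PSL_3(\ell^f)$, $\PSU_3(\ell^f)$, or $M_{11}$ (Alperin, Brauer and Gorenstein); and in the characteristic-$2$ situation, where $N_G(T)$ turns out to be strongly embedded, Bender's theorem forces $G\cong\PSL_2(2^f)$, ${}^2\!B_2(2^f)$, or $\PSU_3(2^f)$. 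This pins $G$ down, up to isomorphism, to a group of Lie type together with a handful of exceptional alternatives.

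It remains to filter this list by the minimality condition, using the known subgroup structure of each candidate. For $\PSL_2(q)$ with $q=\ell^f$, Dickson's subgroup theorem shows the only nonsolvable proper subgroups are subfield subgroups $\PSL_2(\ell^{f'})$ (with $f'\mid f$) and copies of $A_5$. Excluding nonsolvable subfield subgroups forces $f$ to be prime, and excluding $A_5$ imposes the stated congruences: this produces $\PSL_2(2^p)$, then $\PSL_2(3^p)$ with $p$ odd (the value $p=2$ being excluded since $\PSL_2(9)\cong A_6\supset A_5$), and finally $\PSL_2(p)$ with $5\mid p^2+1$. The identical subfield reasoning for the Suzuki groups, via Suzuki's determination of their maximal subgroups, forces $q=2^p$ with $p$ an odd prime. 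The candidates $A_7$, $M_{11}$ and the unitary groups are discarded because each contains a proper nonsolvable subgroup, whereas $\PSL_3(3)$ survives: its maximal subgroups are two parabolics of shape $3^2{:}\GL_2(3)$ and a Frobenius group $13{:}3$, all solvable. The survivors are exactly the five families.

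The hard part is entirely the recognition step: each Sylow-$2$ characterization theorem cited is itself a major and technical piece of local group theory, and deciding which hypothesis applies to a given $G$ requires the full $N$-group machinery; the most delicate case is the characteristic-$2$ one, where one must establish that the Sylow $2$-normalizer is strongly embedded before Bender's theorem can be brought to bear. I note that, granting the classification of finite simple groups, there is a far shorter route — scan the known list of simple groups and verify directly, family by family, that all maximal subgroups are solvable exactly in the five stated cases — but this trades the self-contained local analysis for the entire classification.
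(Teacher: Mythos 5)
The paper does not prove this statement at all: it is Thompson's classification of minimal simple groups, imported with the citation \cite[Corollary 1, page 388]{Th} and used purely as an input (together with Barry--Ward) to Theorem \ref{simple1}. So there is no internal proof to compare yours against, and the only fair benchmark is the literature. Measured that way, your outline is an accurate roadmap: minimal simple implies $N$-group, Feit--Thompson makes the prime $2$ available, the Sylow $2$-structure trichotomy feeds into the recognition theorems (Gorenstein--Walter for dihedral, Alperin--Brauer--Gorenstein for semidihedral and wreathed, Bender for the strongly embedded case), and Dickson's subgroup theorem together with Suzuki's determination of maximal subgroups performs the final filtering. You are upfront that every load-bearing step is a cited black box, which is the only honest option for a theorem of this magnitude. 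One caveat worth recording: your reconstruction is the \emph{modern} route, not Thompson's. The source the paper cites is the 1968 Bulletin article, which predates both Bender's strongly embedded theorem (1971) and Alperin--Brauer--Gorenstein (1970); Thompson's original argument is a self-contained, and vastly longer, piece of local analysis, so your sketch describes how one would prove the theorem today rather than how the cited proof actually runs.

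Two small factual slips in your filtering step, neither fatal. First, the maximal subgroups of $\PSL_3(3)$ are the two parabolics of shape $3^2{:}\GL_2(3)$, the Frobenius group $13{:}3$, \emph{and} a class of subgroups isomorphic to $S_4$; your list omits $S_4$, but since $S_4$ is solvable the conclusion stands. Second, excluding $A_5$ ``imposes the stated congruences'' only for $p>5$: when $p=5$ the group $\PSL_2(5)$ does not contain $A_5$ properly --- it \emph{is} $A_5$ --- so it survives your minimality filter, and it is absent from family (iii) only because $\PSL_2(5)\cong\PSL_2(4)$ already appears under family (i) with $p=2$. A careful version of the last step must make such exceptional isomorphisms explicit (likewise $\PSL_3(2)\cong\PSL_2(7)$, which is why $\PSL_3(q)$ with $q$ even need not appear separately). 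With those repairs, your outline matches the shape of the known proof at the level of detail one can reasonably demand here.
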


Relying on this and the classification of the finite simple groups, the following result was proved in \cite[Theorem 1]{BW}:

\begin{theorem}[Barry-Ward]
Every finite non-abelian simple group contains a minimal simple group as a subgroup.
\end{theorem}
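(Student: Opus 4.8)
The plan is to produce the desired subgroup by descending to a \emph{minimal non-solvable} subgroup of $G$ and then extracting a minimal simple group from its top section. Since $G$ is non-abelian simple it is non-solvable, so the set of non-solvable subgroups of $G$ is non-empty; let $K$ be one of smallest order. By minimality of $|K|$, every proper subgroup of $K$ is solvable, so $K$ is a minimal non-solvable group, and the task reduces to showing that such a $K$ (and hence $G$) contains a minimal simple group.

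First I would analyse the top of $K$. Let $R$ be the solvable radical of $K$, that is, its largest normal solvable subgroup. As $K$ is non-solvable, $R \neq K$, and $L := K/R$ is non-solvable with trivial solvable radical. Every proper subgroup of $L$ is the image of a proper subgroup of $K$ containing $R$, hence solvable; so all proper subgroups of $L$ are solvable. A short argument then shows $L$ is simple: a minimal normal subgroup of $L$ is a direct product of isomorphic simple groups; it cannot be abelian (that would enlarge the radical), and if it were a product of two or more non-abelian simple factors, or a proper such product, one factor would be a proper non-solvable subgroup of $L$, a contradiction. Hence the minimal normal subgroup equals $L$ and is a single non-abelian simple group, so $L$ is a minimal simple group, and by Thompson's Theorem \ref{Thompson} it lies on the explicit list.

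The remaining, and genuinely hard, step is to \emph{realise this minimal simple quotient $L$ as a subgroup} of $K$, rather than merely as a section. Equivalently, one must lift $L$ through the solvable extension $1 \to R \to K \to L \to 1$. This is where the classification of finite simple groups enters: using Thompson's explicit list one checks, family by family, that these extensions split or otherwise admit an embedded copy of $L$, drawing on the known Schur multipliers and subgroup structure of the groups $\PSL_2(2^p)$, $\PSL_2(3^p)$, $\PSL_2(p)$, ${}^2\!B_2(2^p)$, and $\PSL_3(3)$. A useful shortcut disposes of the bulk of cases at once, since most non-abelian simple groups contain a copy of $A_5 \cong \PSL_2(4) \cong \PSL_2(5)$, which is itself minimal simple; the delicate cases are those whose order imposes arithmetic restrictions, for instance the Suzuki groups, whose order is prime to $3$ and which therefore contain no $A_5$, but which are already minimal simple and so satisfy the conclusion trivially. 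I expect this lifting/embedding step, together with the accompanying case analysis over the classification, to be the main obstacle; the reduction to a minimal non-solvable subgroup and the identification of its top section as minimal simple are comparatively formal.
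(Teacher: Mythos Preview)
The paper does not prove this theorem; it quotes it from \cite{BW}, noting only that the proof there relies on Thompson's list together with the classification of finite simple groups applied directly to $G$. So there is no ``paper's own proof'' to compare against beyond that description. That said, your proposed argument has a genuine gap worth flagging.

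Your reduction produces a minimal non-solvable subgroup $K\le G$ with $K/R$ minimal simple, and then asserts that one can lift $L=K/R$ to a subgroup of $K$ by a family-by-family splitting check. This step fails. Take $p$ odd with $\PSL_2(p)$ minimal simple (e.g.\ $p=7$). Then $K=\SL_2(p)$ is minimal non-solvable: every proper subgroup either contains $-I$ and projects to a proper (hence solvable) subgroup of $\PSL_2(p)$, or avoids $-I$ and injects into $\PSL_2(p)$ as a proper subgroup (the central extension is non-split), hence is again solvable. Yet $\SL_2(p)$ contains \emph{no} non-abelian simple subgroup whatsoever: $-I$ is its unique involution, so any subgroup of even order contains the nontrivial central $\{\pm I\}$ and cannot be simple, while odd-order subgroups are solvable by Feit--Thompson. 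Thus the extension $1\to\{\pm I\}\to \SL_2(p)\to \PSL_2(p)\to 1$ neither splits nor ``otherwise admits an embedded copy of $L$''. Since your lifting argument looks only at the internal structure of $K$, it would have to cover this $K$ as well, and it cannot. Your fallback remark that ``most non-abelian simple groups contain a copy of $A_5$'' is not a shortcut for the lifting step; it is an entirely different argument applied to $G$ itself, and is in fact the shape of what Barry and Ward actually do: run through the classification of finite simple groups and exhibit, for each $G$, an explicit minimal simple subgroup (often $A_5$, with separate treatment for the $3'$-order Suzuki groups and a few other families). The minimal-non-solvable descent, as you have set it up, does not reach the conclusion.
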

\enlargethispage{\baselineskip}

Theorems \ref{main1} and \ref{main2} and Proposition \ref{subsquo}(i), reinforced with the previous two results, give as a consequence:

\begin{theorem}\label{simple1}
Let $K$ be a number field and $G$ a finite non-abelian simple group. Then, there is a twist $\Omega$ for $K\hspace{-1pt}G$, arising from a $2$-cocycle on an abelian subgroup of $G$, such that $(K\hspace{-1pt}G)_{\Omega}$ does not admit a Hopf order over $\Oint_K$.
\end{theorem}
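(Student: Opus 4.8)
The plan is to reduce the statement to the core results already established for the minimal simple groups by combining them with the subgroup structure of an arbitrary finite non-abelian simple group. Concretely, given such a $G$, the Barry--Ward theorem furnishes a minimal simple subgroup $S\leq G$, and Thompson's classification (Theorem \ref{Thompson}) guarantees that $S$ belongs to one of the five families listed there. For each of these families we already know, by Theorems \ref{main1} and \ref{main2}, that there exist an abelian subgroup $M\leq S$ of central type and a normalized non-degenerate cocycle $\omega$ on $\widehat{M}$ for which $(K\hspace{-1pt}S)_{\Omega_{M,\omega}}$ admits no Hopf order over $\Oint_K$. Since $M\leq S\leq G$, this same datum $(M,\omega)$ produces a twist $\Omega_{M,\omega}$ for $K\hspace{-1pt}G$, and the whole point will be to transfer the non-existence from $S$ up to $G$.

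Matching each family to the appropriate theorem requires a brief case check. In the three families $\PSL_2(2^p)=\SL_2(2^p)$, $\PSL_2(3^p)$, and ${}^2\!B_2(2^p)$, the defining size is $q=\ell^{\,p}$ with characteristic $\ell\in\{2,3\}$ and prime exponent $p$, so the exponent $m=p$ appearing in Theorem \ref{main1} satisfies $m>1$; that theorem then applies with $M$ an abelian $\ell$-subgroup of central type (the Suzuki case being exactly the one with $\ell=2$ and odd exponent). For $\PSL_3(3)$ one notes that $\gcd(3,3-1)=1$, so $Z(\SL_3(3))$ is trivial and $\PSL_3(3)=\SL_3(3)$; Theorem \ref{main1} then applies directly. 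The remaining family $\PSL_2(p)$, with $p>3$ prime and $5\mid p^2+1$, has $m=1$, so Theorem \ref{main1} is unavailable; moreover its Sylow $p$-subgroup is cyclic of order $p$ and cannot be of central type, since central type forces $|M|$ to be a perfect square. Here one instead invokes Theorem \ref{main2} with $M$ a Klein four-group, which sits inside $\PSL_2(p)$ by Proposition \ref{subPSL2} and is of central type with $E=C_2$. In every case we thereby secure a valid pair $(M,\omega)$ for $S$.

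The final and decisive step is the transfer from $S$ to $G$, which rests on the compatibility of twisting with the inclusion of group algebras. Indeed, $K\hspace{-1pt}S$ is a Hopf subalgebra of $K\hspace{-1pt}G$, and the twist element $\Omega_{M,\omega}$ lies in $K\hspace{-1pt}M\otimes K\hspace{-1pt}M\subseteq K\hspace{-1pt}S\otimes K\hspace{-1pt}S$; since $\Delta_{\Omega}(s)=\Omega_{M,\omega}\,(s\otimes s)\,\Omega_{M,\omega}^{-1}$ remains in $K\hspace{-1pt}S\otimes K\hspace{-1pt}S$ for every $s\in S$, the twisted structure makes $(K\hspace{-1pt}S)_{\Omega_{M,\omega}}$ a Hopf subalgebra of $(K\hspace{-1pt}G)_{\Omega_{M,\omega}}$. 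Now suppose, for contradiction, that $(K\hspace{-1pt}G)_{\Omega_{M,\omega}}$ admitted a Hopf order $X$ over $\Oint_K$. By Proposition \ref{subsquo}(i), the intersection $X\cap(K\hspace{-1pt}S)_{\Omega_{M,\omega}}$ would then be a Hopf order of $(K\hspace{-1pt}S)_{\Omega_{M,\omega}}$ over $\Oint_K$, contradicting the non-existence extracted from Theorems \ref{main1} and \ref{main2}. Hence $\Omega:=\Omega_{M,\omega}$ is the desired twist. I expect the main (though mild) obstacle to be precisely this compatibility verification together with the bookkeeping of the five cases, especially remembering that the $m=1$ family $\PSL_2(p)$ forces the use of the Klein four-group, and hence of Theorem \ref{main2}, rather than a $p$-subgroup.
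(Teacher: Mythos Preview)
Your proposal is correct and follows essentially the same approach as the paper: reduce via the Barry--Ward theorem to a minimal simple subgroup $S\leq G$, identify $S$ through Thompson's list, apply Theorems \ref{main1} and \ref{main2} to obtain a suitable $(M,\omega)$ in $S$, and then use Proposition \ref{subsquo}(i) to transfer the non-existence of a Hopf order from $(K\hspace{-1pt}S)_{\Omega_{M,\omega}}$ to $(K\hspace{-1pt}G)_{\Omega_{M,\omega}}$. Your case bookkeeping is accurate, including the observation that $\PSL_2(p)$ with $m=1$ forces the Klein four-group and hence Theorem \ref{main2}, and that $\PSL_3(3)=\SL_3(3)$.
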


The statement for the complexified group algebra now follows as in Corollary \ref{complexif}:

\begin{corollary}\label{simple2}
Let $G$ be a finite non-abelian simple group. Then, there is a twist $\Omega$ for $\Co G$, arising from a $2$-cocycle on an abelian subgroup of $G$, such that $(\Co G)_{\Omega}$ does not admit a Hopf order over any number ring.
\end{corollary}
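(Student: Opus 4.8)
The plan is to deduce this corollary from Theorem~\ref{simple1} in precisely the same manner that Corollary~\ref{complexif} is deduced from Theorem~\ref{main1}, which in turn follows the pattern of \cite[Corollary 2.4]{CM1}. First I would fix the twist: by the Barry--Ward theorem $G$ contains a minimal simple subgroup $G_0$, and by Thompson's classification (Theorem~\ref{Thompson}) $G_0$ is one of $\PSL_2(2^p)$, $\PSL_2(3^p)$, $\PSL_2(p)$, $^2\!B_2(2^p)$, or $\PSL_3(3)=\SL_3(3)$. In each case $G_0$ possesses an abelian subgroup $M$ of central type covered by Theorem~\ref{main1} or Theorem~\ref{main2} (a $p$-subgroup of square order, or a Klein four-group), together with a normalized non-degenerate $2$-cocycle $\omega$ on $\widehat{M}$. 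Set $\Omega=\Omega_{M,\omega}$; since $M\subseteq G_0\subseteq G$ this is a twist for $\Co G$ of the required kind, and $(KG_0)_{\Omega}$ is a Hopf subalgebra of $(KG)_{\Omega}$ for every number field $K$ containing the values of $\omega$, because $\Omega\in KM\otimes KM\subseteq KG_0\otimes KG_0$.

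The only point to add to the cocommutative argument of \cite{CM1} is the choice of $\omega$: up to a coboundary (which alters neither the isomorphism type of $(\Co G)_{\Omega}$ nor the non-degeneracy of $\omega$) one may take the image of $\omega$ inside a cyclotomic field $K_0$, see \cite[Proof of Proposition 2.1.1]{Ka1}. Then $\Omega\in K_0M\otimes K_0M$ and $(K_0G)_{\Omega}$ is a $K_0$-form of $(\Co G)_{\Omega}$. Now suppose, for contradiction, that $(\Co G)_{\Omega}$ admits a Hopf order $Y$ over a number ring $\Oint_L$. I would enlarge $L$ to a number field $K\supseteq L\cdot K_0$ that is a splitting field for $G$, so that $(KG)_{\Omega}$ splits both as an algebra and as a coalgebra; since all structure constants lie in $K_0\subseteq K$, the $K$-form $(Y\otimes_{\Oint_L}L)\otimes_L K$ is identified with $(KG)_{\Omega}$. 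The integral closure of $\Oint_L$ in $K$ is exactly $\Oint_K$, and by the base-change observation in Section~\ref{stfrm} the module $Y\otimes_{\Oint_L}\Oint_K$ is then a Hopf order of $(KG)_{\Omega}$ over $\Oint_K$.

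Finally I would reach the contradiction through Theorem~\ref{simple1}: intersecting $Y\otimes_{\Oint_L}\Oint_K$ with the Hopf subalgebra $(KG_0)_{\Omega}$ yields, by Proposition~\ref{subsquo}(i), a Hopf order of $(KG_0)_{\Omega}$ over $\Oint_K$, whence $\frac{1}{|M|}\in\Oint_K$ by Theorem~\ref{main1} or Theorem~\ref{main2}. This is absurd, because $p$ divides $|M|$ and $p$ is not invertible in $\Oint_K$. Hence no such $Y$ exists, and since $L$ was arbitrary, $(\Co G)_{\Omega}$ admits no Hopf order over any number ring. The delicate step is the descent bookkeeping of the middle paragraph---matching the $K$-form arising from an \emph{a priori} arbitrary $\Oint_L$-Hopf order with the standard twisted form $(KG)_{\Omega_{M,\omega}}$, which is exactly the place where absorbing the cyclotomic values of $\omega$ into $K$ is needed; once this identification is secured, the problem reduces to the integral statements already established.
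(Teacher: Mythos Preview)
Your proposal is correct and follows essentially the same route as the paper: the paper deduces Corollary~\ref{simple2} from Theorem~\ref{simple1} exactly ``as in Corollary~\ref{complexif}'', which in turn is the argument of \cite[Corollary~2.4]{CM1} plus the observation that $\omega$ may be taken with cyclotomic values---precisely what you spell out. Your use of Barry--Ward, Thompson's list, Proposition~\ref{subsquo}(i), and Theorems~\ref{main1}/\ref{main2} just unpacks the proof of Theorem~\ref{simple1}, and the descent step you flag as delicate is the same one handled (at the same level of detail) in the cited reference.
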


For the sporadic groups or the Tits group, Theorem \ref{simple1} and Corollary \ref{simple2} can be deduced from \cite[Theorem 3.3, Remark 3.4, and Corollary 3.5]{CM2} in view of the following remark:

\begin{remark}
Let $G$ be a sporadic group or the Tits group $^2 F_4(2)'$. Then, $G$ has a subgroup isomorphic to $A_5$.
\end{remark}

This remark can be verified by inspection of the tables of maximal subgroups for the sporadic groups in \cite[Section 4]{W2} and that for the Tits group in \cite[Theorem 1]{W1} and \cite{Tc}. A close look reveals the inclusions as listed below:\vspace{1.5mm}
\begin{center}
\scalebox{0.86}{\begin{tabular}{|c|c|p{1mm}|c|c|p{1mm}|c|c|}
\cline{1-2} \cline{4-5} \cline{7-8}
\textrm{Group} & \textrm{Contains } &  & \textrm{Group} & \textrm{Contains } &  & \textrm{Group} & \textrm{Contains } \\
\cline{1-2} \cline{4-5} \cline{7-8}
$M_{11}$ & $S_5$ & & $M_{12}$ & $S_5$    & & \multicolumn{1}{c|}{\multirow{2}{*}{$M_{22}$}} & \multicolumn{1}{c|}{\multirow{2}{*}{$A_7$}} \\
$M_{23}$ & $A_8$ & & $M_{24}$ & $A_8$ & &  \multicolumn{1}{c|}{ }        &    \multicolumn{1}{c|}{ }    \\
\cline{1-2} \cline{4-5} \cline{7-8}
$J_1$    & $A_5$    & & \multicolumn{1}{c|}{\multirow{2}{*}{$J_2$}}    & \multicolumn{1}{c|}{\multirow{2}{*}{$A_5$}}     & &\multicolumn{1}{c|}{\multirow{2}{*}{$J_3$}}     &  \multicolumn{1}{c|}{\multirow{2}{*}{$A_5$}}  \\
$J_4$    & $M_{22}$ & & \multicolumn{1}{c|}{ }                        &  \multicolumn{1}{c|}{ }         &   & \multicolumn{1}{c|}{ } & \multicolumn{1}{c|}{ } \\
\cline{1-2} \cline{4-5} \cline{7-8}
$C\hspace{-0.75pt}o_1$   & $A_9$    & & $C\hspace{-0.75pt}o_2$   & $M_{23}$ & & $C\hspace{-0.75pt}o_3$   & $S_5$   \\
\cline{1-2} \cline{4-5} \cline{7-8}
$F\hspace{-0.75pt}i_{22}$ & $S_{10}$ & & $F\hspace{-0.75pt}i_{23}$ & $S_{12}$ & & $Fi_{24}'$ & $A_5$\\
\cline{1-2} \cline{4-5} \cline{7-8}
$H\hspace{-0.75pt}S$ & $S_8$ & & $M\hspace{-0.75pt}cL$ & $A_7$ & & $H\hspace{-0.75pt}e$   & $S_4(4)$ \\
$Ru$ & $A_6$ & & $Suz$ & $A_7$ & & $O'\hspace{-0.75pt}N$ & $A_7$ \\
$H\hspace{-0.75pt}N$ & $A_{12}$ & & $Ly$ & $M_{11}$ & & $T\hspace{-0.75pt}h$ &  $S_5$ \\
$B$  & $S_5$ & & $\mathbb{M}$ & $A_5$ & & $^2\!F_4(2)'$ & $\PSL_2(5^2)$ \\
\cline{1-2} \cline{4-5} \cline{7-8}
\end{tabular}}\vspace{1.5mm}
\end{center}
For the Tits group, note that $\PSL_2(5^2)$ contains $\PSL_2(5)$, which is isomorphic to $A_5$; see \cite[Theorem 6.26(iv), page 414]{Su2} for a more general statement.

\begin{remark}
Theorem \ref{simple1} and Corollary \ref{simple2} hold true for any finite group containing a non-abelian simple group in light of Proposition \ref{subsquo}(i). Among the groups satisfying this condition we find almost simple groups and some families of primitive groups, see \cite[Section 4.8]{DM} for more details.
\end{remark}

In our way to Corollary \ref{simple2} we showed that the complex group algebra of any finite non-abelian simple group can be twisted to produce a simple non-commutative and non-cocommutative Hopf algebra. This was first proved by Hoffman in \cite{H} following a different strategy that does not use minimal simple groups.

\bigskip

\end{document}